\documentclass[reqno]{amsart}
\usepackage{latexsym}
\usepackage{amsmath}
\usepackage{amsthm}
\usepackage{amssymb}
\usepackage{color}
\usepackage{enumitem}
\usepackage{verbatim}
\usepackage{mathrsfs}
\usepackage{hyperref}
\newcommand{\vip}{\vskip0.1cm}

\newcommand{\indiq}{1\!\! 1}
\newcommand{\e}{{\epsilon}}

\newcommand{\ip}[1]{\langle #1 \rangle}
\newcommand{\E}{{\mathbb{E}}}
\newcommand{\R}{{\mathbb{R}}}

\newcommand{\be}{{\bf e}}
\newcommand{\bv}{{\bf v}}
\newcommand{\bV}{{\bf V}}
\newcommand{\bw}{{\bf w}}
\newcommand{\bW}{{\bf W}}
\newcommand{\bY}{{\bf Y}}

\newcommand{\abs}[1]{\lvert#1\rvert}

\newcommand{\cF}{{{\mathcal F}}}
\newcommand{\cW}{{{\mathcal W}}}
\newcommand{\cL}{{{\mathcal L}}}

\newcommand{\cP}{{{\mathcal P}}}

\newcommand{\Sp}{{\mathbb{S}}}
\newcommand{\bbP}{{\mathbb{P}}}
\newcommand{\rd}{{\mathbb{R}^3}}

\newcommand{\intot}{\int_0^t}

\newcommand{\tf}{{\tilde f}}

\newcommand{\tv}{{\tilde v}}
\newcommand{\tx}{{\tilde x}}

\newcommand{\tW}{{\tilde W}}
\newcommand{\tM}{{\tilde M}}

\newcommand{\norm}[1]{\left\|#1 \right\|}

\newcommand{\sm}{{s-}}

\newcommand{\beqn}{\begin{equation}}
\newcommand{\eeqn}{\end{equation}}
\newcommand{\bear}{\begin{eqnarray}}
\newcommand{\eear}{\end{eqnarray}}
\newcommand{\bean}{\begin{eqnarray*}}
\newcommand{\eean}{\end{eqnarray*}}

\newcommand{\bbd}{\mathbf}
\newcommand{\beq}{\begin{equation}}
\newcommand{\eeq}{\end{equation}}
\newcommand{\benu}{\begin{enumerate}}
\newcommand{\eenu}{\end{enumerate}}
\newcommand{\blem}{\begin{lem}}
\newcommand{\elem}{\end{lem}}

\newcommand{\bb}{\mathbb}
\newcommand{\ca}{\mathcal}

\newcommand{\scr}{\mathscr}

\newenvironment{preuve}{\vip\noindent \textit{Proof.}}{\hfill$\square$\vip}

 \makeatletter
\@namedef{subjclassname@2020}{ \textup{2020} Mathematics Subject Classification}
\makeatother

\newtheorem{theo}{Theorem}[section]
\newtheorem{prop}[theo]{Proposition}
\newtheorem{rem}[theo]{Remark}
\newtheorem{lem}[theo]{Lemma}

\newtheorem{cor}[theo]{Corollary}
\newtheorem{nota}[theo]{Notation}

\begin{document}

\title[Kac particle systems for the Boltzmann equation]
{Quantitative propagation of chaos  for the Boltzmann equation with moderately soft potentials}

\author[C. Liu]{Chenguang Liu}
\author[L. Xu]{Liping Xu}
\author[A. Zhang]{An Zhang}

\address{Delft Institute of Applied Mathematics, EEMCS, TU Delft, 2628 Delft, The Netherlands}
\email{liucg92@gmail.com}

\address{School of Mathematical Sciences, Beihang University, PR China}
\email{xuliping.p6@gmail.com}
\address{School of Mathematical Sciences, Beihang University, PR China}
\email{anzhang@buaa.edu.cn}

\subjclass[2020]{82C40, 60K35, 65C05}

\keywords{Boltzmann equation; Kac particle systems; propagation of chaos; Fisher information; non-cutoff kernels.}

\begin{abstract}
We study the Kac particle system associated with the spatially homogeneous Boltzmann equation with non-cutoff collision kernels in the moderately soft potential regime $-1<\gamma<0$. We prove uniqueness in law for the particle system and, in particular, establish  an explicit  convergence rate from the empirical measure of the Kac particle system to the weak solution of the Boltzmann equation in Wasserstein-2 distance, assuming that the initial datum $f_0$ has  finite Fisher information and polynomial moments. To the best of our knowledge, this provides the first quantitative convergence rate for the Kac particle system in the soft potential setting.

\end{abstract}

\maketitle

\tableofcontents

\section{Introduction}
\subsection{The Boltzmann equation} The Boltzmann equation, introduced by Boltzmann in 1872, is a cornerstone of kinetic theory. It describes the statistical evolution of a dilute gas undergoing binary collisions and serves as a bridge between microscopic particle dynamics and macroscopic hydrodynamic behavior.  We consider the \emph{three-dimensional spatially homogeneous Boltzmann equation}, which governs the evolution of  the velocity distribution function  $f_t(v)$ for particles with velocity $v\in\bb{R}^3$ at time $t\geq 0$. It takes the form
\beq\label{Bol}
\partial_tf_t(v)=\int_{\bb{R}^3}dv_{\ast}\int_{\bb{S}^2}d\sigma B(|v-v_{\ast}|,\theta)[f_{t}(v^{\prime})f_{t}(v_{\ast}^{\prime})-f_t(v)f_t(v_{\ast})],
\eeq
where 
\beq\label{Bol1}
v^{\prime}=v^{\prime}(v,v_*,\sigma)=\frac{v+v_{\ast}}{2}+\frac{|v-v_{*}|}{2}\sigma,\quad  v_{\ast}^{\prime}=v_{\ast}^{\prime}(v,v_*,\sigma)
=\frac{v+v_{\ast}}{2}-\frac{|v-v_{\ast}|}{2}\sigma,
\eeq
and $\theta$ is the \emph{deviation angle}, defined as $\cos\theta=\frac{v-v_{\ast}}{|v-v_{\ast}|}\cdot\sigma$. The term $B(|v-v_{\ast}|,\theta)\geq 0$ is the \emph{collision kernel}, which  depends on the  interaction between particles  and determines the  collision rate for a pair of particles. 
\vip
A fundamental physical property of the Boltzmann equation is that its solution conserves mass, momentum, and kinetic energy, while the entropy is nonincreasing over time. By Galilean invariance and  a suitable normalization of mass and energy, we may assume that the initial data are centered and normalized. Consequently, for all $t\geq 0$,
\begin{align}\label{eq: inv3}
\int_{\R^3} f_t(v)dv=1,\qquad
\int_{\R^3} v f_t(v)dv=&0,\qquad
\int_{\R^3} |v|^2 f_t(v)dv=3,\nonumber \\
\text{and}\qquad \int_{\R^3} f_t\log(f_t)dv\le& \int_{\R^3} f_0\log(f_0)dv.   \quad \text{(Entropy decay)}
\end{align}

\subsection{Assumptions}
We always assume that there is a measurable function $b:[-1,1]\rightarrow\bb{R}_+$ such that
\beq\label{con}
\left\{\begin{array}{l} B(|v-v_{\ast}|,\theta)={|v-v_{\ast}|}^{\gamma}b(\cos\theta),\\[3pt]
\exists~0<c_0<c_1, ~s.t. ~\forall~\theta\in(0,\pi/2), ~c_0\theta^{-2-\nu}\leq b(\cos\theta)\leq c_1\theta^{-2-\nu},\\[3pt]
\forall~\theta\in[\pi/2,\pi],~b(\cos\theta)=0,
\end{array}
\right.\eeq
with some parameters $\gamma$ and  $\nu$ satisfying 
\beq\label{pa-range}
-1<\gamma<0, \quad 0<\nu<1 \quad  \text{and}\quad  0<\gamma+\nu<1.
\eeq
\begin{rem}
The first two conditions in \eqref{con} correspond to a classical physical example, inverse power-law interactions. More precisely, if particles interact in pairs through a repulsive force proportional to $1/r^s$ for some $s>2$, then the corresponding collision kernel satisfies \eqref{con} with $\gamma=(s-5)/(s-1)$ and $\nu=2/(s-1)$. The parameter range \eqref{pa-range} then corresponds exactly to the case $3<s<5$, which is commonly referred to as \textbf{moderately soft potentials}. For further details, see, e.g., \cite[Section~4]{villani24}. Besides, the vanishing condition $b(\cos\theta)=0$ for $\theta\in[\pi/2,\pi]$ is not restrictive,  it can be imposed without loss of generality by symmetry, as noted in \cite{MR1765272}.
\end{rem}

\vip

\subsection{Change of variables}

Performing the following change of variables transfers the singularity of the kernel from $\theta=0$ to $z=\infty$. This  is introduced solely for  convenience and is not essential to the argument. Define 
\[\beta(\theta):= b(\cos\theta)\sin\theta, \quad H(\theta):=\int_\theta^{\pi/2}\beta(x)dx \qquad \theta\in(0,\pi/2),\]  and  the inverse function
\beq\label{nota}
G(z):=\theta=H^{-1}(z),  \qquad z\in[0,\infty).
\eeq
The second condition in \eqref{con} implies that  $$c_0\theta^{-1-\nu}/2\le\beta(\theta)\le c_1\theta^{-1-\nu},\qquad \theta\in(0,\pi/2).$$
Indeed, under the condition \eqref{con}, the function $H$ is continuous, decreasing, and valued in $[0,\infty)$. Consequently, it admits an inverse function $G:[0,\infty)\mapsto(0,\pi/2)$, which is continuous and decreasing, such that $G(H(\theta))=\theta$ and $H(G(z))=z$. Moreover, there exist constants $c_3>c_2>0$ such that for all $z>0$,
\beq\label{con1}
c_2(1+z)^{-1/\nu}\le G(z)\le c_3(1+z)^{-1/\nu}.
\eeq
Furthermore, it follows from \cite[Lemma 1.1]{MR2398952} that there exists a constant $c_4>0$ such that for all $x,y\in\bb{R}_+$,
\begin{align}
  \int_0^{\infty}\left[G(z/x)-G(z/y)\right]^2dz\le c_4\,\frac{(x-y)^2}{x+y}.  \label{ineq}
\end{align}
We also observe that, if $x\ge y>0$, the monotonicity of $G$ implies that $G(z/x)\ge G(z/y)$ for all $z\ge 0$ and 
\begin{align} \label{ineq1}
\int_0^{\infty}|G(z/x)-G(z/y)|\,dz=
(x-y)\int_0^{\infty}G(z)\,dz
\le c_5|x-y|, 
\end{align}
for some constant $c_5>0.$ By symmetry, \eqref{ineq1} remains valid for all $x,y> 0$.

\subsection{Spherical parameterization}\label{sec: sph par}

We use the spherical parameterization of \eqref{Bol1} introduced in \cite{MR1885616}. For each $x \in\bb{R}^{3}\setminus{\{0\}}$, we introduce   two vectors $I(x), J(x)\in\bb{R}^3$ such that  $(\frac{x}{|x|},\frac{I(x)}{|x|},\frac{J(x)}{|x|})$ forms an orthonormal basis of $\bb{R}^3$. For $x,v,v_\ast\in\bb{R}^3$,
$\theta\in(0,\pi],~\varphi\in[0,2\pi)$, set the parameterization 
\beq\label{z}
\left\{\begin{array}{l} \Gamma(x,\varphi):=(\cos\varphi) I(x)+(\sin\varphi) J(x),\\[3pt]
v^\prime(v,v_\ast,\theta,\varphi):=v-\frac{1-\cos\theta}{2}(v-v_\ast)
+\frac{\sin\theta}{2}\Gamma(v-v_\ast,\varphi),\\[3pt]
a(v,v_\ast,\theta,\varphi):=v^\prime(v,v_\ast,\theta,\varphi)-v.
\end{array}
\right.
\eeq
Although $\Gamma(x,\varphi)$ is not continuous in $x$, Tanaka's trick (see \cite[Lemma 2.3]{MR3456347}) yields a measurable function $\varphi_0: \R^3\times \R^3\to [0,2\pi)$ such that, for all $x,y\in \R^3$ and $\varphi\in[0,2\pi)$,
\beq\label{Gamma}
|\Gamma(x,\varphi)-\Gamma(y,\varphi+\varphi_0(x,y))|\le |x-y|.
\eeq
Since $\Gamma(x,\varphi)$ is orthogonal to $x$ and has the same norm as $x$, one easily checks that
\beq\label{num}
|a(v,v_*,\theta,\varphi)|=\sqrt{\frac{1-\cos\theta}{2}}|v-v_*|.
\eeq
By parameterization \eqref{z}, we can write the unit vector $\sigma\in\bb{S}^2$ as  $$\sigma=\frac{v-v_*}{|v-v_*|}\cos\theta + \frac{I(v-v_*)}{|v-v_*|}\sin\theta\cos\varphi + \frac{J(v-v_*)}{|v-v_*|}\sin\theta\sin\varphi.$$
Using the change of variables $\theta=G(z/|v-v_\ast|^\gamma)$, we further introduce, for $v,v_\ast\in\mathbb{R}^3$, $z\ge0$ and $\varphi\in[0,2\pi)$, that
\begin{align}\label{atc}
c(v,v_\ast,z,\varphi)
:=a\bigl(v,v_\ast,G(z/|v-v_\ast|^\gamma),\varphi\bigr).
\end{align}
Following \cite{MR3313757}, we introduce a modified collision kernel featuring an angular cutoff and a velocity truncation. Consider  the collision kernel $B$ satisfying \eqref{con} and $G$ defined in \eqref{nota}. For $K\ge1$,  $r>0$, $\theta\in(0,\pi/2)$ and $\varphi\in[0,2\pi)$, we introduce
\begin{align}\label{Def: kernel cut}
   B_K(r ,\theta):= B(r\lor K^{1/\gamma},\theta)\indiq_{\{\theta\ge G(K)\}}=(r^\gamma\land K) b(\cos\theta) \indiq_{\{\theta\ge G(K)\}}. 
\end{align}
This immediately implies that $\int_{\Sp^2} B_K(r,\theta)d\sigma=2\pi K(r^\gamma\land K)\le CK^2$. Correspondingly,  using the change of variables
$z=(|v-v_*|^\gamma\wedge K)H(\theta)$, for $v, v_*\in\bb{R}^3$, $z\ge0$ and $\varphi\in[0,2\pi)$, the truncated jump amplitude is denoted by 
 \beq\label{CK}
c_K(v,v_*,z,\varphi)=a[v,v_*,G(z/(|v-v_*|^\gamma\land K)),\varphi]\indiq_{\{z\le K(|v-v_*|^\gamma\land K)\}}.
\eeq

\subsection{The Kac particle system}
The Kac particle system was introduced by Kac in his seminal work \cite{MR84985} as a tool for deriving the spatially homogeneous Boltzmann equation. We now construct this particle system as the solution to a system of stochastic differential equations (SDEs).
\vip
Let $N\ge2$ and let $(O_{ij})_{1\le i\le j\le N}$ be a family of i.i.d. Poisson random measure with intensity $N^{-1}ds dz d\varphi$. For $1\le j<i\le N$, we impose the symmetry
\begin{align}\label{mea: oij}
    O_{ij}(ds,dz,d\varphi)=O_{ji}(ds,dz,d\varphi),
\end{align}
so that particles do not collide with themselves, as the collision kernel vanishes for identical velocities.  Let $(V_0^i)_{i=1,\ldots,N}$ be a family of i.i.d. $f_0$-distributed random variables and set $\bV_0:=(V_0^1,\ldots,V_0^N)$. Then the random vector $\bV_0$ has distribution $F_0^N(d\bv)=f_0^N(d\bv)=f_0^{\otimes N}$. According to \cite[Theorem 4.1]{Fournier2025},   for any $T>0$ and $i=1,\dots,N$,  there exists an 
exchangeable solution $(V_t^i)_{t\in[0,T]}$ solving 
\begin{align}\label{mainsde}
V^{i}_t
= V^i_0 
+ \sum_{j=1}^N \int_0^t \int_0^\infty \int_0^{2\pi} 
c\bigl(V^{i}_{s-},V^{j}_{s-},z,\varphi\bigr)\,
O_{ij}(ds,dz,d\varphi),
\end{align}
where $c =a[v,v_*,G(z/|v-v_*|^\gamma),\varphi]$ defined in \eqref{atc}.

 \vip

We now briefly outline the microscopic interaction mechanism underlying the particle system. Consider a system of $N$ particles with velocities $\bv=(v_1,\dots,v_N)\in\R^{3N}$.  Each unordered pair of particles $(i,j)$ undergoes binary collisions governed by the Boltzmann collision mechanism. Specifically, for pre-collisional velocities $(v_i,v_j)$, the pair collides with deviation angle  $\theta$ at rate $\frac{1}{2N}\,B(|v_i-v_j|,\theta),$
for each $\sigma\in\Sp^2$.  During such a collision, the velocities are updated simultaneously according to the scattering rule \eqref{Bol1}.
\vip

In this work, we focus on the physically relevant \emph{non-cutoff} case, where the angular kernel $\beta(\theta)$ has a non-integrable singularity as $\theta\to 0$.
This corresponds to the regime of grazing collisions, in which particles experience infinitely many interactions with very small deflection angles. 
As a consequence, individual velocities evolve as pure jump processes with infinite activity but very small deviation angles.

\subsection{Kac master equation} 
Let $\ca{C}_b^2(\R^{n})$ denotes the set of functions whose derivatives  up to order two are continuous and bounded.
\vip
By It\^o's formula, the particle system \eqref{mainsde} can also be formulated weakly through the master equation: for any test function $\phi\in \mathcal{C}_b^2(\R^{3N})$, the law $F_t^N$ of the particle system $\bV_t^N=(V_t^i)_{1\le i\le N}$ satisfies
\begin{align}\label{eq:  wmaster}
 \partial_t \ip{F_t^N,\phi}= \ip{F_t^N,\mathcal L_N\phi}    := \frac 1 {2N} \sum_{i,j=1}^N \ip{F_t^N,\mathcal L_N^{i,j}\phi},     \qquad F_0^N(d\bv)\in  \ca{P}_2(\bb{R}^{3N}),
\end{align}
where $\ip{F_t^N,\phi}:= \int_{\R^{3N}}\phi(\bv) F_t^N(d\bv)$ is the dual integral,  and $\mathcal L_N$ is the generator with piecewise operator $ \cL^{ij}_{N}$ defined as
\begin{align}\label{operator}
 \cL^{ij}_{N}\phi(\bv) := \int_{\Sp^2} 
[\phi(\bv + (v'(v_i,v_j,\sigma)-v_i)\be_i
+(v_*'(v_j,v_i,\sigma)-v_j)\be_j) 
-\phi(\bv)] B(|v_i-v_j| ,\theta)d\sigma,
\end{align} 
where we use the notation: for any $h\in \mathbb R^3$, $h\mathbf e_i:=(0,\cdots,0, h, 0,\cdots, 0)\in \mathbb R^{3N}$ with $h$ put in the $i$-th place. Moreover, if the measure $F_t^N(d\bv)$ is absolutely continuous with respect to Lebesgue measure, i.e. $F_t^N(d\bv)= F_t^N(\bv)d\bv$, a change of variables gives the following master equation:
\begin{align}\label{eq: master}
    \partial_t F_t^N(\bv)= \cL_{N} F_t^N(\bv) = \frac 1 {2N} \sum_{i,j=1}^N \cL^{ij}_{N} F_t^N(\bv).
\end{align}
This equation has the same main physical properties as the Boltzmann equation since its solutions also conserve mass, momentum and kinetic energy and have a decaying entropy.  Using the spherical parameterization \eqref{z} and \eqref{atc}, we  rewrite the operator \eqref{operator} as
\begin{align}\label{eq: para}
    \cL^{ij}_{N}\phi(\bv)=&\int_0^{\frac{\pi}{2}} \int^{2\pi}_{0} 
[\phi(\bv + a(v_i,v_j,\theta,\varphi)(\be_i-\be_j)) 
-\phi(\bv)] B(|v_i-v_j| ,\theta)\sin\theta\, d\theta\, d\varphi\nonumber\\
=& \int_0^{\infty} \int^{2\pi}_{0} 
[\phi(\bv + c(v_i,v_j,z,\varphi)(\be_i-\be_j)) 
-\phi(\bv)] \,dz\,d\varphi.
\end{align}

\vip

 We now state the main results.

\subsection{Main results}
Let $\ca{P}(\bb{R}^n)$ denotes the set of probability measures on $\bb{R}^n$. When $f\in\ca{P}(\bb{R}^n)$ has a density, i.e. $f\in\ca{P}(\bb{R}^n)\cap L^1(\bb{R}^n)$, we still denote this density by $f$.
For $q>0$, we set
\[\ca{P}_q(\bb{R}^n)=\{f\in\ca{P}(\bb{R}^n): m_q(f)<\infty\} \qquad\text{with}\quad m_q(f):=\int_{\bb{R}^n}|v|^q f(dv).\] 
We write $\mathcal{P}^{\mathrm{sym}}(\mathbb{R}^n)$ and $\mathcal{P}_{2}^{\mathrm{sym}}(\mathbb{R}^n)$ for the sets of exchangeable probability measures in $\mathcal{P}(\mathbb{R}^n)$ and $\mathcal{P}_{2}(\mathbb{R}^n)$, respectively. For example, $F\in\mathcal{P}_{2}^{\mathrm{sym}}(\mathbb{R}^n)$ means that $F\in\mathcal{P}_{2}(\mathbb{R}^n)$ is symmetric, i.e.,  for every permutation
$\tau$ of $\{1,\dots,n\}$,
\[
F(dv_1\cdots dv_n) = F(dv_{\tau(1)}\cdots dv_{\tau(n)}).
\]

For any $g \in \mathcal{P}^{\mathrm{sym}}(\mathbb{R}^{3N})$, we define its Fisher information  
\begin{align*}
    I_N(g):= \begin{cases}
\int_{\R^{3N}}\frac{|\nabla g|^2}{g}d\bv,&\text{ if }  g\in L^1(\R^{3N})\ \hbox{and} \int_{\R^{3N}}|\nabla g|d\bv<+\infty,\\
        +\infty,&\text{ otherwise}.
        \end{cases}.
\end{align*}

For $j\ge 1$, let $F^{N:j}$ denote the $j$-marginal of the joint law $F^N\in\mathcal{P}^{\mathrm{sym}}(\mathbb{R}^{3N})$, that is,
\begin{align*}
    F^{N:j}(v^1,\cdots,v^j):=\int_{\R^{3(N-j)}} F^N(v^1,\cdots,v^N)dv^{j+1}\cdots dv^N,
\end{align*}
which is regarded as a probability density on $\R^{3j}$. Recalling  \cite[Theorem 3]{Carlen91}, see also \cite[Lemma 3.7]{MR3188710} and \cite[Lemma 2.3]{Fournier2025}, the Fisher information has the following properties:
\beq\label{I-marg}
I_j(F^{N:j})\le \frac{j}{N}I_N(F^N).
\eeq
Moreover,  if $f$ is a probability density on $\R^3$, then for all $N\ge1$, 
\beq\label{tensor}
I_N(f^{\otimes N})=NI_1(f).
\eeq
Our first main result is the uniqueness for the Kac particle system.
\begin{theo}\label{kpst}
Let the collision kernel $B$ satisfy \eqref{con}, $\gamma$, $\nu$ satisfy \eqref{pa-range}. Assume that $f_0\in L^1(\bb{R}^{3})\cap \ca{P}_{2}(\bb{R}^{3})$ has finite Fisher information, i.e. $I_1(f_0)<\infty$. For every $N\ge2$,  the  solution $(V_t^i)_{1\le i\le N, t\in[0,T]}$ of \eqref{mainsde}, starting from a family  of i.i.d $f_0$-distributed random variables $(V_0^i)_{1\le i\le N}$,  is pathwise unique.  \end{theo}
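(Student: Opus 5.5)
We prove pathwise uniqueness by coupling two solutions driven by the same noise and controlling their distance with a Grönwall-type inequality. A priori estimates come from the propagation of Fisher information for the master equation, in the spirit of \cite{Fournier2025}.

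Let $(V_t^i)$ and $(W_t^i)$ be two solutions of \eqref{mainsde} on $[0,T]$. They are built on the same Poisson measures $(O_{ij})$ and share the initial condition $V_0^i=W_0^i$. The first step is to record the a priori bounds available for any exchangeable solution. Energy is conserved, so $\sum_i|V_t^i|^2=\sum_i|V_0^i|^2$ almost surely. The law $F_t^N$ has Fisher information bounded by $I_N(F_0^N)=NI_1(f_0)$, using \eqref{tensor} and the nonincrease of Fisher information along the Kac master equation. By \eqref{I-marg} the two-particle marginal then satisfies $I_2(F_t^{N:2})\le 2I_1(f_0)$. A standard consequence, via a Sobolev-type embedding, is that for $\alpha<3$ (and $-\gamma,\,-2\gamma$ lie well below $3$ since $-1<\gamma<0$)
\[
\sup_{t\le T}\E\bigl[|V_t^1-V_t^2|^{-\alpha}\bigr]\le C\bigl(1+I_1(f_0)\bigr).
\]
This holds for each solution separately, since each is exchangeable with the same initial law. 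Because we only know that each solution, not the pair, has a regular law, the coupling argument must use singular weights evaluated along only one of the two solutions at a time.

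The second step introduces the distance. Set $D_t=\frac1N\sum_i|V_t^i-W_t^i|$, or its squared version, possibly localized. Using the stopping time $\tau_R=\inf\{t:\ \min_{i\ne j}(|V^i_t-V^j_t|\wedge|W^i_t-W^j_t|)\le 1/R \text{ or } \max_i(|V^i_t|+|W^i_t|)\ge R\}$ keeps all rates finite. Itô's formula for the compensated Poisson integrals, together with the modified Tanaka coupling (inserting $\varphi_0(V^i-V^j,W^i-W^j)$ via \eqref{Gamma}), gives
\[
\E\bigl[|V^i_{t}-W^i_{t}|^2\bigr]\le \frac1N\sum_j\int_0^{t}\E\Bigl[\int_0^\infty\!\!\int_0^{2\pi}\bigl|c(V^i_s,V^j_s,z,\varphi)-c(W^i_s,W^j_s,z,\varphi+\varphi_0)\bigr|^2 dz\,d\varphi + \dots\Bigr]ds.
\]
We then split $c(v,v_*,\cdot)-c(w,w_*,\cdot)$ as in \cite{MR2398952,MR3313757}. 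The part with the same angle is Lipschitz in $(v-v_*)$ after integrating in $z$, using $\int G(z/x)^2dz\le Cx$. The part with different angles is controlled by \eqref{ineq} and \eqref{ineq1} with $x=|v-v_*|^\gamma$ and $y=|w-w_*|^\gamma$. The resulting term is $\frac{(x-y)^2}{x+y}$, and for $\gamma<0$ it is bounded by $C|v-v_*-(w-w_*)|^2\,(|v-v_*|^{\gamma-1}\wedge|w-w_*|^{\gamma-1})\cdot(\ldots)$. One also needs a linear term from $\int a\,dz$, whose difference involves $|v-v_*|^{\gamma}$ singularities. Both are treated with \eqref{ineq1}.

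The main obstacle is closing the estimate despite the singular factor $|V^i-V^j|^{\gamma-1}$. Here $\gamma-1\in(-2,-1)$, so the factor is integrable but unbounded. Following Fournier--Guérin, we will use a weighted distance. We choose $\rho_t=\frac1N\sum_i|V^i_t-W^i_t|$ and bound the bad term by $C\,(|V^i-V^j|^{\gamma-1}+|W^i-W^j|^{\gamma-1})\,|X|$, using the symmetric minimum. Then we apply the Fournier--Guérin trick: either use a logarithmic Grönwall lemma with $\E\int_0^T|V^i_s-V^j_s|^{\gamma-1}ds<\infty$, or change variables to $\E[\log(1+|X|/\delta)]$. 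Integrability of $|V^1-V^2|^{\gamma-1}$ comes from the Fisher bound above, with $\alpha=1-\gamma<2<3$. The result is $\E\rho_{t\wedge\tau_R}\le \int_0^t \E[\Lambda_s\rho_s]ds$ with $\Lambda_s$ integrable in time. A stochastic Grönwall argument, for instance via the process $\rho_t\exp(-\int_0^t\Lambda_s ds)$, then yields $\rho\equiv0$ up to $\tau_R$. Finally, $\tau_R\to\infty$ almost surely as $R\to\infty$, since the Fisher bound implies that particles almost surely never meet and energy conservation bounds the velocities. This gives pathwise uniqueness.
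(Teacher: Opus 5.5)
\textbf{The gap is your last sentence.} Your stopping time $\tau_R$ includes the event that two particles come within $1/R$. You then conclude that $\tau_R\to\infty$ a.s.\ ``since the Fisher bound implies that particles almost surely never meet.'' The Fisher information bound only controls one-time marginals. Via Hardy (Lemma~\ref{cor: -2}) it gives $\sup_{t\le T}\E[|V^1_t-V^2_t|^{-2}]\le 2I_1(f_0)$. Hence $V^1_t\neq V^2_t$ a.s.\ for each fixed $t$, and by Fubini for Lebesgue-a.e.\ $t$. It gives no control of $\inf_{t\le T}|V^1_t-V^2_t|$ (nor of the left limits), and that is what $\tau_R\to\infty$ requires. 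This is exactly the step on which the paper spends Lemmas~\ref{lem: v-vf} and~\ref{lem: -alpha}. There one applies It\^o's formula to $|V^1_t-V^2_t|^{-\alpha}$ and observes that $(1,2)$-collisions leave $|V^1-V^2|$ invariant. The jumps caused by collisions with a third particle, $|c|\,(|V^1-V^2+c|^{-1-\alpha}+|V^1-V^2|^{-1-\alpha})$, are then controlled by a Hardy inequality in the post-collisional variable $v_2-v_1'$, using the Fisher information of the three-particle marginal $F^{N:3}$. This yields $\E[\sup_{t\le T}|V^1_t-V^2_t|^{-\alpha}]<\infty$, hence $\tau_{N,A}\to\infty$ in probability. (Incidentally, finite Fisher information gives negative moments only up to order $2$, not all orders below $3$. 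This is harmless, since you only need orders below $2$.)

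You have also misplaced the difficulty. Once separation is built into $\tau_R$, every singular weight is bounded by a power of $R$, and an ordinary Gr\"onwall closes, exactly as in the paper. No logarithmic or weighted Gr\"onwall is needed. Moreover, the Tanaka estimates produce the factor $\min\{|V^i-V^j|^\gamma,|W^i-W^j|^\gamma\}$ (Remark~\ref{inequ:c}), not $|\cdot|^{\gamma-1}$: the $(x-y)^2/(x+y)$ term comes multiplied by $|v-v_*|\,|w-w_*|$.

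This suggests a genuine repair along your stochastic-Gr\"onwall line. Drop the separation condition from the stopping time and keep only an energy localization. Then show pathwise that the compensator of $D_t=\frac1N\sum_i|V^i_t-W^i_t|^2$ has density at most $\Lambda_t D_t$, with $\Lambda_t=\frac{C}{N}\sum_{i\ne j}|V^i_t-V^j_t|^\gamma$, bounding the minimum by the constructed solution only. Then $\E\int_0^T\Lambda_t\,dt<\infty$ follows from the fixed-time Hardy bound and Fubini. Consequently $e^{-\int_0^t\Lambda_s\,ds}D_t$ is a nonnegative local supermartingale started at $0$, hence identically zero. This would even bypass Lemma~\ref{lem: -alpha}.

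As written, however, your argument falls short of this in two ways. First, you state only an expectation inequality $\E\rho_{t\wedge\tau_R}\le\int_0^t\E[\Lambda_s\rho_s]\,ds$, which cannot feed a stochastic Gr\"onwall. Second, you use the sum $|V^i-V^j|^{\gamma-1}+|W^i-W^j|^{\gamma-1}$, which needs singular-moment bounds for the arbitrary second solution $W$ as well; bounding the minimum by the constructed solution avoids this.
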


 Our second main result is the following quantitative propagation of chaos.

\begin{theo}\label{th: prop chaos}
Let $(f_t)_{t\geq0}\in C\bigl([0,T], L^3(\mathbb R^3)\cap \ca{P}_2(\mathbb R^3)\bigr)$ be the unique solution to  \eqref{Bol}  satisfying  \eqref{eq: inv3}. Assume that the hypotheses of Theorem \ref{kpst} hold.  In addition, suppose that $\int_{\R^3} |x|^qf_0(x)dx<\infty$ for some $q>8/(1+\gamma)$. For any  $N\ge2$, let $\bV^N_t=(V_t^1,\dots,V_t^N)$ be the Markov process solving \eqref{mainsde} starting from  i.i.d $f_0$-distributed initial random variables $(V_0^i)_{i=1,\dots,N}$.  Then, for every $T > 0$, there exists a constant $C_{T,q,f_0}>0$ (depending only on $m_q(f_0), I_1(f_0)$ and  $T$) such that  
    \begin{align}\label{eq:resu}
        \sup_{0\le t\le T}\E[\cW_2^2(\mu^N_{\bV_t},f_t) ]\le C_{T,q,f_0}\Big(N^{-1/3}+N^{-\ell(q,\gamma)}\Big),
    \end{align}
    where  $\mu^N_{\bV_t}= N^{-1}\sum_{i=1}^N \delta_{V_t^{i}}$ is the associated empirical measure, and  $$\ell(q,\gamma)= \frac{2(1+\gamma)(q-6)((1+\gamma)q-8)}
{(1+\gamma)(5+2\gamma)q^2-(44+32\gamma+12\gamma^2)q+48(1+\gamma)}.$$ 
\end{theo}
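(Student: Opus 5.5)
We will prove Theorem \ref{th: prop chaos} by a coupling argument in the spirit of Fournier--Mischler and Fournier--Guillin. First, using the existence and uniqueness of the weak solution $(f_t)$ in $C([0,T],L^3\cap\ca{P}_2)$, we represent $f_t$ as the time marginal of a nonlinear SDE
\[
W_t=W_0+\int_0^t\int_{\R^3}\int_0^\infty\int_0^{2\pi} c(W_{s-},w_*,z,\varphi)\,M(ds,dw_*,dz,d\varphi),
\]
where $M$ is a Poisson measure with intensity $ds\,f_s(dw_*)\,dz\,d\varphi$. We then build $N$ coupled copies $W^i$ driven by the same Poisson measures $O_{ij}$ as the particles $V^i$. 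For each collision between $i$ and $j$, we pick the partner velocity $W^{j}_{s-}$ in an optimal way: $W^{i,*}_s$ is sampled so that the pair $(\mu^N_{\bW_s},f_s)$ is nearly optimally coupled in $\cW_2$. We also rotate $\varphi$ by Tanaka's angle $\varphi_0$ from \eqref{Gamma}. The target estimate is then
\[
\E\cW_2^2(\mu^N_{\bV_t},f_t)\le 2\E|V^1_t-W^1_t|^2+2\E\cW_2^2(\mu^N_{\bW_t},f_t).
\]
The second term is handled by the Fournier--Guillin rate for i.i.d. samples with $q$ moments, which gives $N^{-1/2}+N^{-(q-2)/q}$ in dimension 3 and is dominated by $N^{-1/3}$.

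Next we apply Itô's formula to $|V^1_t-W^1_t|^2$ and use \eqref{ineq}, \eqref{ineq1} and \eqref{Gamma}. For the soft kernel $|v-v_*|^\gamma$, this yields the Fournier--Guillin type bound
\[
\frac{d}{dt}\E|V^1-W^1|^2\le C\,\E\Big[\big(|V^1-V^j|^{\gamma}+|W^1-W^{j}|^{\gamma}\big)\big(|V^1-W^1|^2+|V^j-W^{j}|^2\big)\Big]+\text{error},
\]
with some extra lower-order terms. The singular factor $|x|^\gamma$, with $\gamma\in(-1,0)$, is the key difficulty. We truncate it at level $K$ using the kernel $B_K$ from \eqref{Def: kernel cut}: we split according to whether $|V^1-V^j|\ge K^{1/\gamma}$ or not, and similarly for the $W$ pair. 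On the truncated part, the bound is Gronwall-friendly with constant $K$. The remaining part has small relative distance, and we control it as follows:
\begin{itemize}
\item For the $W$ pair, we use $f_s\in L^3$, which bounds $\sup_w\int |w-w_*|^{\gamma}f_s(dw_*)$ and the probability of small distances.
\item For the $V$ pair, we use the propagated Fisher information. Since $I_N(F^N_t)\le I_N(F^N_0)=NI_1(f_0)$ (Fisher information is nonincreasing along the Kac master equation \eqref{eq: master}; we rely on \cite{Fournier2025} here) and by \eqref{I-marg}, we get $I_2(F^{N:2}_t)\le 2I_1(f_0)$. A Fisher-information/Sobolev bound then gives $\E|V^1_t-V^2_t|^{\gamma p}\le C$ for suitable $p$, with $|\gamma| p<3$.
\end{itemize}
Hölder's inequality then combines these negative moments with the $q$-moments of $V,W$, which are propagated uniformly in $N$ by the standard Povzner argument. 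This handles the large-velocity region where $|V^1-W^1|^2$ is unbounded.

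The main obstacle is this last balancing step. The error terms have the form $K^{-a}$ from the singular region, $R^{2-q}$-type moment tails from cutting velocities at level $R$, and $e^{CKT}$- or $K$-dependent amplification of the $N^{-1/3}$ coupling error. One cannot use a naive Gronwall bound with $e^{CK}$, since that would ruin any polynomial rate. Instead we plan to exploit the fact that the Fisher information bound makes $\int_0^T\E|V^1_s-V^2_s|^{\gamma}ds$ finite, so that the Gronwall factor stays $O(1)$. The $K$-dependence then appears only polynomially through the truncation error. Optimizing $K$ and $R$ as powers of $N$ should produce the rational exponent $\ell(q,\gamma)$, whose factors $(q-6)$ and $((1+\gamma)q-8)$ reflect the moment conditions needed in the Hölder steps.

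Pathwise uniqueness from Theorem \ref{kpst} is used throughout, so that the coupled particle system is indeed the Markov process $\bV^N$.
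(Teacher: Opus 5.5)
\textbf{There is a genuine gap at the step you call the main obstacle, and the fix you propose is not valid.} After It\^o's formula, the dangerous term is the one where the coupling error of the \emph{partner} is multiplied by a singular weight, e.g.\ $\E\big[|V^j_s-W^{*}_s|^2\,|W^1_s-W^{*}_s|^\gamma\big]$. This weight is random and correlated with the quantity you want to Gronwall.

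H\"older's inequality with $\E|V^1_s-V^2_s|^{-2}\le 2I_1(f_0)$ only produces $\big(\E|V^j_s-W^*_s|^{2p'}\big)^{1/p'}$. At best this gives a sublinear inequality $x'\le Cx^\theta+\dots$ with $\theta<1$, which is useless starting from $x(0)=0$. Nor does finiteness of $\int_0^T\E|V^1_s-V^2_s|^\gamma ds$ keep the Gronwall factor $O(1)$: we have $\E[a_sX_s]\neq\E[a_s]\E[X_s]$, and a stochastic Gronwall argument would need exponential moments of $\int_0^t a_s\,ds$, which you do not have. So your $K$-truncation still leaves the $e^{CKT}$ amplification and gives no rate.

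Replacing the $\min\{\cdot,\cdot\}$ of Remark~\ref{inequ:c} and Lemma~\ref{lem: 3ccut} by a sum also discards the freedom the paper relies on, namely always keeping the $W$-side factor. The paper handles the two terms as follows.
\begin{enumerate}
\item For the term carrying $|V^1-W^{1,K}|^2$, averaging over $j$ and $\alpha$ gives deterministically $\int|W^{1,K}_s-x|^\gamma f^K_s(dx)\le 1+I_1(f_0)$ (Lemma~\ref{ww2p}(ii)). Your $\sup_w$ bound does the same job here, but only if you keep the $\min$.
\item For the partner term $B_3^K$, the paper uses exchangeability to swap indices, obtaining $|V^1_s-\Pi^{1,K}_s|^2\,|\e_kY(\alpha)+W^{k+1,K}_s-\Pi^{1,K}_s|^\gamma$. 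It then replaces $W^{k+1,K},\dots,W^{2k,K}$ by the block-decoupled i.i.d.\ processes $\tW^{i,K,k}$ (Lemma~\ref{thele}) and averages the singular weight over that block. Because of the smoothing $\e_kY(\alpha)$ built into the coupling (Lemma~\ref{fundest}), this average is at most $C(1+\|\bar\mu^k\|_{L^2})\le C(1+\|f^K_s\|_{L^2})$ outside an event of probability $Ck^{1-\delta q/3}$ (Proposition~\ref{norm-bound}).
\end{enumerate}
The resulting Gronwall constant is independent of $K$, which is sent to infinity rather than optimized. The exponent $\ell(q,\gamma)$ comes from balancing the smoothing error $\e_k^{2+2\gamma}$ (with $\delta=6/q$, hence the factor $q-6$) against the decoupling error $(k/N)^{((1+\gamma)q-8)/((1+\gamma)q-4)}$. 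It does not come from H\"older steps in $K$ and $R$.

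\textbf{There is a second gap: your coupled $W^i$ are not independent.} They share the noises $O_{ij}=O_{ji}$, and their partners depend on the whole vector $\bW$. So the Fournier--Guillin i.i.d.\ bound cannot be applied to $\mu^N_{\bW_t}$. The paper needs the block decoupling of Lemma~\ref{thele} together with Lemma~\ref{line}, which gives an error $k_0/N+k_0^{-1/2}$. Taking $k_0\sim N^{2/3}$, this is exactly where the $N^{-1/3}$ in \eqref{eq:resu} comes from; in your scheme that term has no source.

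Finally, a non-cutoff coupled system whose partners are chosen through an optimal map of $\bW$ is not obviously well posed. The paper avoids this by coupling with the cutoff processes $W^{i,K}$, which have finite jump rate, and then paying $\cW_2^2(f^K_t,f_t)\le CK^{1-1/\nu}$ (Lemma~\ref{lem: ftKft}).
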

\begin{rem}
    1. The condition $q>8/(1+\gamma)$, together with $\gamma>-1$, guarantees the positivity of $\ell(q,\gamma)$. Moreover,  for $\gamma>-1/4$ and $q$ sufficiently large , we have   $\ell(q,\gamma)>\frac{1}{3}.$ Specifically, asymptotically  $|\gamma|\ll 1$ and $q\gg 1$, it follows that   $\ell(q,\gamma)\sim 2/5>1/3.$
Consequently, when the potential is not too soft and the initial data $f_0$ has sufficient decay, the convergence rate is governed by $N^{-1/3}$. This result matches, in terms of the order in $N$, the rate obtained for Maxwellian molecules in \cite{MR3769742}.
\end{rem}

\begin{rem}
 Most of the results in this paper  can be extended to the range
$\gamma\in(-2,-1]$ and $\nu\in[1,3/2)$, which corresponds to another class of moderately soft potentials. In particular, the estimates involving terms of the form
$|v+c|^2-|v|^2$ remain valid in this regime. We strongly believe that Theorem \ref{kpst} remains valid for  $\gamma\in(-2,-1]$.  However, for $\gamma\in(-2,-1]$, we are unable to obtain a quantitative propagation of chaos analogous to that in Theorem \ref{th: prop chaos}. The main difficulty is that an inequality of the form 
$$|a^\gamma-b^\gamma|\le C|a^{-1}-b^{-1}|\min\{a^{\gamma+1},b^{\gamma+1}\}$$  is no longer available in this range. Instead, we only have the weaker bound
 $$|a^\gamma-b^\gamma|\le C|a^{-1}-b^{-1}|(a^{\gamma+1}+b^{\gamma+1}).$$ Inserting this weaker estimate into the proof of Theorem \ref{th: prop chaos} produces an unmanageable term  $|V^{1}_s-W^{1,K}_s|^2|V^{1}_s-V^{2}_s|^\gamma$. Controlling this term appears to be beyond the scope of the present methodology. 
 \end{rem}
 
 \subsection{Strategy}
In this paper, our contribution lies in two directions: 
\begin{itemize}
    \item We establish the pathwise uniqueness  of the particle system $(\bV^N_t=(V_t^1,\dots,V_t^N))_{t\ge 0}$ solving \eqref{mainsde} (Theorem \ref{kpst}). The existence of this system, as well as the conservation of momentum and energy and the monotonicity of the Fisher information of its law, has already been established in \cite[Theorem 4.1]{Fournier2025} and \cite[Proposition 4.11]{Tabary26}.
    
   \vip
    
    \item  We investigate the quantitative propagation of chaos for moderately soft potentials, namely for $\gamma\in(-1,0)$ (Theorem \ref{th: prop chaos}), which is considerably more challenging than the case $\gamma\ge0$.  To the best of our knowledge, very few quantitative results  are available in this regime. We obtain the explicit convergence rate $N^{-1/3}+N^{-\ell(q,\gamma)}$, although is not sharp.  Existing works concern mainly Nanbu’s particle system (a Kac-type model with a less direct physical interpretation): see \cite{MR3784497} for $\gamma\in(-1,0)$ with a quantitative convergence rate. 
\end{itemize}
\vip

To prove the quantitative propagation of chaos for $\gamma\in(-1,0)$, we combine ideas from \cite{MR3784497}, devoted to Nanbu's particle system for the moderately soft potential, and from \cite{MR3769742,lxz}, which treat the Kac particle system for $\gamma\in[0,1]$. A crucial ingredient is the decay of the Fisher information for the Boltzmann equation established in \cite{IMBERT24}, which enables us to control the
singularity of the interaction. The starting point is a coupling between the particle system and a family of real particles governed by the Boltzmann equation on the same probability space. To construct this coupling, we use an optimal transport map between $f_t$ and the empirical measure of the real particles, see Lemma \ref{ww2p}. Since the particles in the coupled system are not independent, we then introduce a second coupling with a family of independent  particles, see Lemma \ref{coupling1},  thereby obtaining a suitable decoupling procedure.

\vip
The main difficulty stems from the singular factor involving a negative power of the relative velocity. To handle this singularity, we need precise estimates on the approximation of empirical measures, following the strategy developed in \cite[Section 5]{MR3784497}, see also \cite[Proposition 5.5]{MR3572320}. However, unlike in these works, the particle system under consideration is not exchangeable. Consequently, the standard exchangeability arguments cannot be applied directly. To overcome this obstacle, we partition the particle system into several blocks. Although global exchangeability is lost, exchangeability is preserved separately among particles inside a given block and among particles outside that block. This partial exchangeability allows us to analyze the whole system through the particles contained in a given block and to derive the required approximation estimates for the corresponding empirical measures.
\vip

Finally, the finite Fisher information assumption provides additional regularity for solutions to the Boltzmann equation due to the decay of the Fisher information, allowing us to considerably simplify several technical arguments compared with those in \cite[Section 5]{MR3784497}. More precisely, it yields the uniform estimate $\sup_{t\ge 0}\|f_t^K\|_{L^2}<\infty$, whereas only local in time bound are available in \cite{MR3784497}. This improvement eliminates the need to partition the time interval and to separately analyze the small and large jumps of the process.

 \subsection{Background and related works} 
 The Boltzmann equation is introduced in \cite{boltzmann} to describe the statistical behavior of a dilute gas out of equilibrium. One of the core questions in kinetic theory is its rigorous derivation from Newtonian particle dynamics, a problem that has remained a major challenge for more than a century. The first breakthrough was due to Lanford \cite{Lanford}, who derived the Boltzmann equation from hard sphere dynamics, but only on a short time interval. Very recently, Deng, Hani and Ma \cite{dengma} established a long time derivation of the Boltzmann equation from hard sphere dynamics, thereby extending Lanford’s result to macroscopic times.

\vip

Alternatively, Kac introduced his seminal program to derive the spatially homogeneous Boltzmann equation from an $N$-particle binary collision process, and justified propagation of chaos in the Maxwell case \cite{MR84985}. Over the past decades, Kac’s program has been extensively developed, especially for hard potentials and Maxwell molecules. Quantitative results were first obtained in \cite{MR334788,MR224348} for bounded collision kernels, and in \cite{MR753814} for unbounded kernels in the hard spheres case. Using Tanaka’s coupling method, Graham and Méléard \cite{MR1428502} derived the first quantitative propagation of chaos estimates for the Boltzmann equation with cutoff in the Maxwell setting. A major advance was achieved by Mischler and Mouhot \cite{MR3069113}, who proved strong, uniform-in-time (though not sharp) quantitative results via a purely abstract analytic method. In \cite{MR3069113}, the rate of chaos is of order $N^{-1/(6+\delta)}$ (for any $\delta>0$) for Maxwell molecules, and of order $(\log N)^{-r}$ for some $r>0$ in the hard spheres case. This rate for Maxwell molecules was later substantially improved by Cortez and Fontbona \cite{MR3769742} to an almost \emph{optimal} rate of $N^{-1/3}$. For hard potentials, Heydecker \cite{MR4419606} obtained a rate of order $(\log N)^{1-1/\nu}$, and more recently Liu, Xu and Zhang \cite{lxz} proved a rate of order $N^{-1/3}$ under stronger assumptions on the initial data.
\vip

Unlike in the hard potential case, {\it propagation of chaos} for soft potentials for both the Landau and Boltzmann equations remains a challenging problem for a long time. The main difficulties stem from the limited regularity of solutions and the singularities appearing both in the relative velocity and in the collision angle.
A major breakthrough was recently achieved by Guillen and Silvestre \cite{Guillen}, who established Fisher  information estimates for the Landau equation. This was followed by another remarkable breakthrough of Imbert, Silvestre, and Villani \cite{IMBERT24}, who proved the decay of Fisher information for the Boltzmann equation; see also \cite{villani24}. These results revealed the fundamental role of Fisher information in controlling the singular structure of kinetic equations and have become a key ingredient in several recent advances on propagation of chaos. Building upon this new regularity framework,  Fournier and Mischler \cite{Fournier2025} recently established the propagation of chaos in the case of moderate soft potentials ($\gamma\in(-2,0)$) via a martingale method. And more recently, Tabary \cite{Tabary26} extended this result to the very soft potential case ($\gamma\in(-3,-2]$) by developing new estimates on the dissipation of the Fisher information. As a consequence, propagation of chaos is now known for the whole physically relevant soft potential range ($\gamma\in(-3,0)$).

\vip
Besides the Kac particle system, another closely related stochastic approximation of the Boltzmann equation, namely the Nanbu particle system \cite{Nanbu83}, has also been studied, where only one particle is updated at each collision. Notably, propagation of chaos for this system has been proved by Fournier and Mischler \cite{MR3456347} for $\gamma\in[0,1]$, Xu \cite{MR3784497} for $\gamma\in(-1,0)$ and Salem \cite{Salem19} for more singular but non-physical potentials.

\vip

We also mention several results on propagation of chaos for the Landau equation. In \cite{MR3621429}, Fournier and Guillin used a coupling strategy to obtain quantitative chaos rates for the Landau equation in the hard potentials and Maxwell molecules cases. In \cite{MR3572320}, Fournier and Hauray treated the case of singular soft potentials. More recently, in \cite{Carrillo}, the bound for the relative entropy between the joint law of the particle system and the tensorized law of the Landau equation was established for Maxwellian Molecules. For hard potentials, Guo \cite{Guo2025} obtained  the propagation of chaos for the Landau equation using BBGKY hierarchy method. Carrillo and Guo \cite{guo25} obtained Fisher-information decay for a Kac model associated with the Landau equation. Propagation of chaos for Kac’s particle system converging to the Landau equation has been established by Tabary \cite{Tabary25} and Feng–Wang \cite{fengwang25} in the very soft potentials and Coulomb regimes.

\subsection{Plan of the paper}

The paper is organized as follows. In Section 2, we collect several preliminary
computations and estimates that will be used throughout the paper. Section 3 is
devoted to the proof of pathwise uniqueness  for the Kac particle system, stated in
Theorem \ref{kpst}.
\vip
Sections 4 and 5 are devoted to the construction of the Boltzmann processes
needed for the coupling argument. In Section 4, we recall and prove several
properties of the solution to the cutoff Boltzmann equation. In Section 5, we
construct the coupled cutoff Boltzmann processes and introduce the corresponding
decoupled processes. In contrast with \cite{MR3769742,MR3621429,lxz}, the
decoupled processes $\tW$ are used not only to control the Wasserstein error in
Proposition \ref{thep}, but also to estimate the negative moments of the coupled
processes $W$.
\vip
Section 6 is inspired by the arguments of \cite{MR3572320,MR3784497} but simpler, where we establish  the approximation of the $L^2$ norm of the empirical measure. Finally, in Section 7, we prove the quantitative propagation of chaos estimate
stated in Theorem \ref{th: prop chaos}.

\section{A priori estimates}
In this section, we collect several a priori estimates that will be used later. 
The following result is a direct consequence of \cite[Lemma 2.3]{MR3784497}, after a minor adaptation to the present definition of  $c_K(v,v_*,z,\varphi)$  in \eqref{CK}.

\blem\label{lem:c} 
Assume \eqref{con}, \eqref{pa-range} and recall  \eqref{atc} and  \eqref{CK}. 
 \vip
{\em (i) (Lemma 2.3-(2.8), \cite{MR3784497})} There is a constant $C>0$ such that, for any $v, v_*\in \R^3$,
\begin{align*}
\int_0^\infty  \int_0^{2\pi} |c(v,v_*,z,\varphi)| d\varphi  dz \le C|v-v_*|^{\gamma+1},\quad
\int_0^\infty  \int_0^{2\pi} |c(v,v_*,z,\varphi)|^2 d\varphi  dz \le C |v-v_*|^{\gamma+2}.
\end{align*}

{\em(ii)}  There is a constant $C>0$ such that, for any $v, v_*\in \R^3$, any $K\ge1$,
\begin{align*}
  \int_0^\infty  \int_0^{2\pi} |c_K(v,v_*,z,\varphi)| d\varphi  dz \le C|v-v_*|^{\gamma+1},\;
  \int_0^\infty \int_0^{2\pi} |c_K(v,v_\ast,z,\varphi)|^2 \,d\varphi\,dz
    \le C\,|v-v_\ast|^{\gamma+2}.
\end{align*}

\elem

\begin{lem}\label{lem: 3ccut}
 Assume \eqref{con}, \eqref{pa-range} and let $K\geq 1$. Recall \eqref{atc} and  \eqref{CK}.  Then there exists a constant $C>0$ independent of $K$, such that for any $v, v_*,\tilde{v}, \tilde{v}_*\in \R^3$,
 \begin{align}
 \Big|\int_0^\infty\int_0^{2\pi}\Delta_{c_K}d\varphi dz\Big|&\le C |v-v_*-(\tilde v-\tilde v_*)| \min\{ |v-v_*|^\gamma,|\tilde v-\tilde v_*|^\gamma\}\label{ineq: KKp22cut1},\\
 \Big|\int_0^\infty\int_0^{2\pi}\tilde\Delta_{c,c_K}d\varphi dz\Big| &\le C |v-v_*-(\tilde v-\tilde v_*)| \min\{ |v-v_*|^\gamma,|\tilde v-\tilde v_*|^\gamma\}\nonumber\\
&\hskip3.5cm+CK^{1-1/\nu}(1+|v-v_*|^{1+\gamma})\label{ineq: KKp22cut11},\\
\int_0^\infty\int_0^{2\pi}\big|\Delta_{c_K}\big|d\varphi dz &\le C |v-v_*-(\tilde v-\tilde v_*)|\min\{ |v-v_*|^\gamma,|\tilde v-\tilde v_*|^\gamma \}\notag\\
&\hskip3.5cm+CK^{1-1/\nu}(1+|v-v_*|+|\tv-\tv_*|)\label{ineq: KKp22cut3},
\end{align}
\begin{align}
\int_0^\infty\int_0^{2\pi}\big|\Delta_{c_K}\big|^2d\varphi dz &\le C |v-v_*-(\tilde v-\tilde v_*)|^2\min\{ |v-v_*|^\gamma,|\tilde v-\tilde v_*|^\gamma\}\notag\\
&\hskip3cm+CK^{1-1/\nu}(1+ |v-v_*|^{2}+|\tv-\tv_*|^{2})\label{ineq: KKp22cut},\\
\int_0^\infty\int_0^{2\pi}\big|\tilde\Delta_{c,c_K}\big|^2\, d\varphi dz &\le C |v-v_*-(\tilde v-\tilde v_*)|^2\min\{ |v-v_*|^\gamma,|\tilde v-\tilde v_*|^\gamma\}\notag\\
&\hskip3cm+CK^{1-1/\nu}(1+ |v-v_*|^{2}+|\tv-\tv_*|^{2})\label{ineq: KKp22cut2},
\end{align}
where  
\begin{align*}
&\Delta_{c_K}:=c_K(v,v_*,z,\varphi)-c_K(\tilde{v}, \tilde{v}_*,z,\varphi+\varphi_0(v-v_*, \tilde{v}-\tilde{v}_*)),\\
& \tilde\Delta_{c,c_K}:=c(v,v_*,z,\varphi)-c_K(\tilde{v}, \tilde{v}_*,z,\varphi+\varphi_0(v-v_*, \tilde{v}-\tilde{v}_*)).
 \end{align*}
\end{lem}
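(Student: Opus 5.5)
We write $x=v-v_*$, $\tilde x=\tilde v-\tilde v_*$ and assume by symmetry that $|x|\le|\tilde x|$, so that $|x|^\gamma\ge|\tilde x|^\gamma$ (recall $\gamma<0$). We also write $\varphi_0=\varphi_0(x,\tilde x)$ and $\kappa(r)=r^\gamma\wedge K$. The plan is to follow the proof of \cite[Lemma 2.3]{MR3784497} and of the analogous Kac estimates in \cite{lxz}, with one extra step to handle the velocity truncation $r^\gamma\wedge K$.

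\textbf{Decomposition.} Both $\Delta_{c_K}$ and $\tilde\Delta_{c,c_K}$ are split into three parts.
\begin{itemize}
\item \emph{Common-angle part.} Use the same angle $\theta=G(z/\kappa(|x|))$ in both terms. This gives $a(v,v_*,\theta,\varphi)-a(\tilde v,\tilde v_*,\theta,\varphi+\varphi_0)$, which by \eqref{z} and \eqref{Gamma} has modulus at most $C\theta\,|x-\tilde x|$ (using $1-\cos\theta\le\theta^2/2$ and $\sin\theta\le\theta$).
\item \emph{Angle-mismatch part.} At fixed $\tilde x$, replacing $G(z/\kappa(|x|))$ by $G(z/\kappa(|\tilde x|))$ costs $C|\tilde x|\,|G(z/\kappa(|x|))-G(z/\kappa(|\tilde x|))|$.
\item \emph{Cutoff mismatch.} This is the contribution of the indicators on $\{\kappa(|x|)K<z\le\kappa(|\tilde x|)K\}$, or of the region $z>K\kappa$ when comparing $c$ with $c_K$.
\end{itemize}

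\textbf{First-order bounds \eqref{ineq: KKp22cut1} and \eqref{ineq: KKp22cut11}.} The key point is that $\int_0^{2\pi}\Gamma(x,\varphi)\,d\varphi=0$. Hence, after integrating in $\varphi$, the common-angle part reduces to $-\frac{1-\cos\theta}{2}(x-\tilde x)$. Integrated in $z$, this gives
\[
C|x-\tilde x|\,\kappa(|x|)\int_0^\infty(1-\cos G(z))\,dz\le C|x-\tilde x|\,|x|^\gamma .
\]
The mismatch part also loses its $\Gamma$ term. It is bounded by $|\tilde x|\int_0^\infty|1-\cos G(z/\kappa_1)-(1-\cos G(z/\kappa_2))|\,dz$, which equals $C|\tilde x|\,|\kappa(|x|)-\kappa(|\tilde x|)|$, in the same way as \eqref{ineq1}.

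We then use the elementary inequality $|a^\gamma-b^\gamma|\le C|a-b|\min(a,b)^{\gamma-1}$, together with the fact that $r\mapsto r^\gamma\wedge K$ is $1$-Lipschitz relative to $r^\gamma$. With $a=|x|\le b=|\tilde x|$ this gives
\[
|\tilde x|\,\big|\kappa(|x|)-\kappa(|\tilde x|)\big|\le C|x-\tilde x|\,|\tilde x|^{\gamma}.
\]
The point is to obtain the \emph{min} of the two powers. Writing $|a^\gamma-b^\gamma|\le C|a^{-1}-b^{-1}|\,b^{\gamma+1}$, valid for $\gamma\in(-1,0)$ as noted in the remark after Theorem \ref{th: prop chaos}, yields exactly $C|a-b|\,b^{\gamma}$.

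The cutoff-mismatch region in $\int\Delta_{c_K}$ carries $|a|\le C\theta|x|$ with $\theta\le G(K)\approx K^{-1/\nu}$. After $\varphi$-integration, the second-order factor $\theta^2$ remains, and this is what has to be checked against a bound of the type of \eqref{ineq: KKp22cut1}. This boundary contribution is the main obstacle. I would try to show that, thanks to the $\varphi$-cancellation, it is at most $CK\,G(K)^2|x|\,|\kappa(|x|)-\kappa(|\tilde x|)|/\kappa$, and hence absorbed into the first term. For $\tilde\Delta_{c,c_K}$ the region $z>K\kappa$ contributes
\[
\int_{K\kappa}^\infty(1-\cos G(z/\kappa))\,|x|\,dz\le C|x|\,\kappa\,K^{1-2/\nu}.
\]
If $\kappa=K$ this uses $|x|\le1$; otherwise $\kappa=|x|^\gamma$. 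In either case it is bounded by $CK^{1-1/\nu}(1+|x|^{1+\gamma})$.

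\textbf{Absolute and quadratic bounds \eqref{ineq: KKp22cut3}, \eqref{ineq: KKp22cut}, \eqref{ineq: KKp22cut2}.} Without cancellation, the common-angle part gives $C|x-\tilde x|\,\kappa\int_0^\infty G$ in $L^1$ and $C|x-\tilde x|^2\kappa\int_0^\infty G^2$ in $L^2$; both integrals are finite because $\nu<1$ and by \eqref{con1}. For the mismatch part, \eqref{ineq1} gives $C|\tilde x|\,|\kappa_1-\kappa_2|$ in $L^1$, and \eqref{ineq} gives $C|\tilde x|^2(\kappa_1-\kappa_2)^2/(\kappa_1+\kappa_2)$ in $L^2$. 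The same inequality on $a^\gamma-b^\gamma$ bounds these by $C|x-\tilde x|\min(|x|^\gamma,|\tilde x|^\gamma)$ and $C|x-\tilde x|^2\min(|x|^\gamma,|\tilde x|^\gamma)$ respectively.

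The cutoff regions are bounded crudely. In $L^1$ they give $\int_{K\kappa}^\infty G(z/\kappa)\,dz\,|x|\le C\kappa K^{1-1/\nu}|x|$, and similarly with squares in $L^2$. Since $\kappa\le K$ and also $\kappa\le|x|^\gamma$, these are at most $CK^{1-1/\nu}(1+|x|+|\tilde x|)$ and its quadratic analogue. This possibly requires splitting according to $|x|\lessgtr K^{1/\gamma}$, with a constant absorbed.
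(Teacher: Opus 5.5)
There is a genuine gap at the step you single out as ``the point'', namely producing $\min\{|v-v_*|^\gamma,|\tilde v-\tilde v_*|^\gamma\}$. Your decomposition is oriented the wrong way.

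Under your normalization $|x|\le|\tilde x|$ you freeze the angle $G(z/\kappa(|x|))$ of the \emph{smaller} relative velocity. The common-angle part then costs $|x-\tilde x|\,\kappa(|x|)\le|x-\tilde x|\,|x|^\gamma$, which is the \emph{maximum} of the two powers, as you write yourself. You then weight the angle mismatch by the \emph{larger} modulus $|\tilde x|$.

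The claimed bound $|\tilde x|\,|\kappa(|x|)-\kappa(|\tilde x|)|\le C|x-\tilde x|\,|\tilde x|^\gamma$ is false. For $|\tilde x|=1$ and $|x|\le K^{1/\gamma}$, the left side is $K-1$ and the right side is at most $2C$. Your algebra does not give it either: $|a^{-1}-b^{-1}|\,b^{\gamma+1}=(b-a)b^\gamma/a$ becomes $(b-a)b^\gamma$ only after multiplication by $a=|x|$, not by $b=|\tilde x|$.

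The same example breaks your $L^1$ and $L^2$ mismatch bounds; for instance $|\tilde x|^2(\kappa_1-\kappa_2)^2/(\kappa_1+\kappa_2)\approx K$, where $\kappa_1=\kappa(|x|)$ and $\kappa_2=\kappa(|\tilde x|)$. Both pieces of your split are of order $K$ while the target is $O(1)$. So no triangle-inequality argument built on this split can give a $K$-uniform bound with the minimum.

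The repair is to reverse the roles. On $\{z\le K\kappa_2\}$, set $\theta_1=G(z/\kappa_1)$ and $\theta_2=G(z/\kappa_2)$, and write
$c_K-\tilde c_K=[a(v,v_*,\theta_1,\varphi)-a(v,v_*,\theta_2,\varphi)]+[a(v,v_*,\theta_2,\varphi)-a(\tilde v,\tilde v_*,\theta_2,\varphi+\varphi_0)]$.
The second bracket costs $C|x-\tilde x|\,\kappa_2\le C|x-\tilde x|\,|\tilde x|^\gamma$ in $L^1$, and $C|x-\tilde x|^2|\tilde x|^\gamma$ in $L^2$. The first bracket is now weighted by $|x|$. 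Using $a^\gamma-b^\gamma\le a^\gamma(b-a)/b$ for $a\le b$, one gets
$|x|\,|\kappa_1-\kappa_2|\le|x|^{1+\gamma}(|\tilde x|-|x|)/|\tilde x|\le|x-\tilde x|\,|\tilde x|^\gamma$
and
$|x|^2(\kappa_1-\kappa_2)^2/(\kappa_1+\kappa_2)\le|x|^{2+\gamma}(|\tilde x|-|x|)^2/|\tilde x|^2\le|x-\tilde x|^2|\tilde x|^\gamma$.
The paper does the equivalent: it proves the bound with $|v-v_*|^\gamma$ using your orientation but with no ordering assumption, then swaps the two pairs. For the quadratic bound it uses the Fournier--Mischler expansion, where the mismatch carries the weight $|x|\,|\tilde x|$.

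There are three further problems.

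(i) The ``main obstacle'' you anticipate for \eqref{ineq: KKp22cut1} is an artefact of your split. Because the truncation is $z\le K\kappa$, the substitution $z\mapsto z/\kappa$ gives exactly $\int_0^\infty\int_0^{2\pi}c_K\,d\varphi\,dz=-\pi(v-v_*)\,\kappa(|v-v_*|)\int_0^K(1-\cos G(z))\,dz$, with the \emph{same} constant for $\tilde c_K$. So no boundary term appears. Then \eqref{ineq: KKp22cut1} reduces to $|x\kappa(|x|)-\tilde x\kappa(|\tilde x|)|\le C|x-\tilde x|\min\{|x|^\gamma,|\tilde x|^\gamma\}$. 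This follows from the correctly oriented split $(x-\tilde x)\kappa(|\tilde x|)+x(\kappa(|x|)-\kappa(|\tilde x|))$, and it is how the paper argues.

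(ii) Your crude cutoff bounds integrate over the wrong set. When $\kappa_2<\kappa_1$ (so $\kappa_2=|\tilde x|^\gamma$), only $c_K(v,v_*,\cdot)$ survives on $\{K|\tilde x|^\gamma<z\le K\kappa_1\}$. There its rescaled variable $z/\kappa_1$ starts at $K|\tilde x|^\gamma/\kappa_1$, which may be far below $K$. The contribution is at most $CK^{1-1/\nu}|x|^{1+\gamma/\nu}|\tilde x|^{\gamma-\gamma/\nu}$. Closing it needs Young's inequality with the positive exponents $1+\gamma/\nu$ and $\gamma-\gamma/\nu$ (respectively $2+2\gamma/\nu$ and $\gamma-2\gamma/\nu$ in $L^2$). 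This is where $\gamma+\nu>0$ enters.

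(iii) For $\tilde\Delta_{c,c_K}$ with $|x|<K^{1/\gamma}$, the terms $c$ and $c_K$ use different angles, $G(z/|x|^\gamma)$ and $G(z/K)$, for every $z$, not only on $\{z>K\kappa\}$. This adds $C|x|(|x|^\gamma-K)\le CK^{1+1/\gamma}$ to the first-order bound and $C|x|^{2+\gamma}\le CK^{1+2/\gamma}$ to the quadratic one. Both are at most $CK^{1-1/\nu}$ by \eqref{pa-range}.
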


\begin{rem}\label{inequ:c}
    Letting $K\to\infty$ in \eqref{ineq: KKp22cut1} and \eqref{ineq: KKp22cut}, we recover the corresponding estimates for the non-cutoff collision increment. More precisely, denote by  $\Delta_c:=c(v,v_*,z,\varphi)-c\bigl(\tilde v,\tilde v_*,z,
\varphi+\varphi_0(v-v_*,\tilde v-\tilde v_*)\bigr)$, then
    \begin{align*}
        &\Big|\int_0^\infty\int_0^{2\pi}\Delta_{c}d\varphi dz\Big|\le C |v-v_*-(\tilde v-\tilde v_*)| \min\{ |v-v_*|^\gamma,|\tilde v-\tilde v_*|^\gamma\},\\ 
&\int_0^\infty\int_0^{2\pi}\big|\Delta_{c}\big|^2d\varphi dz\le C |v-v_*-(\tilde v-\tilde v_*)|^2\min\{ |v-v_*|^\gamma,|\tilde v-\tilde v_*|^\gamma\}.
    \end{align*}
\end{rem}

\begin{proof}[Proof of Lemma \ref{lem: 3ccut}]  The proof is adapted from that of  \cite[Lemma 3.1]{MR3456347}, see also \cite[Lemma 2.3]{MR3784497}.
Recalling $G$ defined in \eqref{nota}, \eqref{atc}  and \eqref{CK}, we introduce the shortened notation $x_K=|v-v_*|\lor K^{1/\gamma}$, $\tilde{x}_K=|\tilde{v}-\tilde{v}_*| \lor K^{1/\gamma}$,  $\varphi_0=\varphi_0(v-v_*, \tilde{v}-\tilde{v}_*)$, $c=c(v,v_*,z,\varphi)$,  $c_K=c_K(v,v_*,z,\varphi)$ and $\tilde c_K=c_K(\tilde{v}, \tilde{v}_*,z,\varphi+\varphi_0)$. Then, 
 \[\Delta_{c_K}=c_K-\tilde c_K, \quad  \tilde\Delta_{c,c_K}=c-\tilde c_K,\quad  x_K^\gamma=|v-v_*|^\gamma\land K, \quad \tilde {x}_K^\gamma=|\tilde v-\tilde v_*|^\gamma\land K.
 \]

\vip
{\bf Step 1.}
Recall  \eqref{CK} that $c_K= -(v-v_*)\frac{1-\cos G(z/x_K^\gamma)}{2}
    +\frac{\sin G(z/x_K^\gamma)}{2}\Gamma(v-v_*,\varphi)$  if $z\le Kx_K^\gamma$, and $c_K=0$ otherwise. Since  $\int_0^{2\pi} \Gamma(v-v_*,\varphi)\, d\varphi=0$, we have 
\begin{align*}
    \int_0^\infty\int_0^{2\pi} c_Kd\varphi dz
    = -\pi(v-v_*)x_K^\gamma\int_0^K (1-\cos G(z))dz.
\end{align*}
Analogously,
\begin{align*}
\int_0^\infty\int_0^{2\pi} \tilde{c}_Kd\varphi dz&=-\pi(\tilde{v}-\tilde{v}_*)\tilde{x}_K^\gamma\int_0^K(1-\cos G(z)) dz,\\
\int_0^\infty\int_0^{2\pi} c \; d\varphi dz&=-\pi(v-v_*)|v-v_*|^\gamma\int_0^\infty(1-\cos G(z)) dz.
\end{align*}
Since $\int_0^K (1-\cos G(z)) dz \le \int_0^\infty (1-\cos G(z))dz < +\infty$ due to \eqref{con1}, it then follows immediately from  $\Delta_{c_K} = c_K - \tilde{c}_K$ that
 \begin{align*}
 \Big|\int_0^\infty\int_0^{2\pi}\Delta_{c_K}d\varphi dz\Big|
 \le  C\Big|(v-v_*)x_K^\gamma - (\tv-\tv_*)\tilde {x}_K^\gamma\Big|.
\end{align*}
Proceeding as in the proof of \cite[Lemma 2.3, P.8]{MR3784497}, we 
thereby establish \eqref{ineq: KKp22cut1}.

For \eqref{ineq: KKp22cut11},  noting that  $\tilde\Delta_{c,c_K}=c-\tilde c_K$  and $\Delta_{c_K}=c_K-\tilde c_K$, we have
\begin{align*}
  \Big|\int_0^\infty\int_0^{2\pi} \tilde \Delta_{c,c_K}d\varphi dz\Big|
    \le \Big|\int_0^\infty\int_0^{2\pi} \Delta_{c_K}d\varphi dz\Big|+\Big|\int_0^\infty\int_0^{2\pi} (c- c_K)d\varphi dz\Big|.
    \end{align*}
Since  $\int_K^\infty(1-\cos G(z)) dz\le CK^{1-2/\nu}$ by \eqref{con1} and $x_K^\gamma=|v-v_*|^\gamma\land K$, we then have
\begin{align*}
 &\Big|\int_0^\infty\int_0^{2\pi} (c-c_K)d\varphi dz\Big|\\
 =&\pi \Big|(v-v_*)\Big(|v-v_*|^\gamma\int_0^\infty(1-\cos G(z)) dz
 -x_K^\gamma\int_0^K(1-\cos G(z)) dz\Big)\Big|\\
    \le& \pi |v-v_*|\Big||v-v_*|^\gamma-x_K^\gamma\Big|\int_0^\infty(1-\cos G(z))dz+\pi|v-v_*|^{1+\gamma}\int_K^\infty(1-\cos G(z)) dz\\ 
    \le& CK^{1+1/\gamma}+C|v-v_*|^{1+\gamma}K^{1-2/\nu}.
\end{align*}
Due to \eqref{pa-range}, we have $K^{1+1/\gamma}\le K^{1-1/\nu}$ and $K^{1-2/\nu}\le K^{1-1/\nu}$, 
which together with \eqref{ineq: KKp22cut1} gives \eqref{ineq: KKp22cut11}.

\vip
{\bf Step 2.}
In this step, we prove  \eqref{ineq: KKp22cut3}. Denote by $$\theta_K:=G\big(z/(|v-v_*|^\gamma\land K)\big)=G\big(z/x_K^\gamma\big), \; \;\tilde\theta_K:=G\big(z/(|\tilde v-\tilde v_*|^\gamma\land K)\big)=G\big(z/\tilde {x}_K^\gamma\big),$$
and $\Gamma:=\Gamma(v-v_*,\varphi),\;\;  \tilde \Gamma:=\Gamma(\tv-\tv_*,\varphi+\varphi_0)$.
  It follows from \eqref{z} amd \eqref{CK} that,  for $z\le K\min\{x_K^\gamma, \tilde{x}_K^\gamma\}$,
\begin{align*}
    c_K-\tilde c_K =\big [-(v-v_*)+(\tilde v-\tilde v_*)\big] \Big(\frac{1-\cos\theta_K}{2} \Big)-& (\tilde v-\tilde v_*)\Big(\frac{\cos\tilde \theta_K-\cos \theta_K}{2} \Big)\\
    +&\frac{\sin\theta_K}{2}(\Gamma- \tilde \Gamma)+ \Big(\frac{\sin\theta_K-\sin\tilde \theta_K}{2} \Big) \tilde \Gamma.
\end{align*}
Since $\sin^2\theta\le \theta$, $|\tilde \Gamma|=|\tv-\tv_*|$ and \eqref{Gamma},  it follows that
 \begin{align*}
    |c_K-\tilde c_K|\le |v-v_*-(\tilde v-\tilde v_*)|G(z/x_K^\gamma)
   + |\tilde v-\tilde v_*| \big|G(z/x_K^\gamma)-G(z/\tilde{x}_K^\gamma)\big|.
\end{align*}
Since $\int_0^\infty G(z)\,dz<\infty$  by \eqref{con1}, we see that 
\[ \int_0^\infty G(z/x_K^\gamma) dz
\le C x_K^\gamma \le C |v-v_*|^\gamma.\]
Moreover, by \eqref{ineq1},
\begin{align*}
\int_0^\infty \big| G\big(z/x_K^\gamma\big)-G\big(z/ \tilde{x}_K^\gamma\big)  \big| dz
\le c_5 | x_K^\gamma - \tilde{x}_K^\gamma|\le C\big| |v-v_*|^\gamma-|\tilde v-\tilde v_*|^\gamma \big|.
\end{align*}
  Using  $|a^\gamma-b^\gamma|\le C|a^{-1}-b^{-1}|\min\{a^{\gamma+1},b^{\gamma+1}\}$ for all $a,b>0$ when $\gamma\in(-1,0)$, we have 
  \begin{align}\label{G-gam}
  \int_0^\infty\big|G(z/x_K^\gamma)-G(z/\tilde{x}_K^\gamma)\big| dz
\le  C |v-v_*-(\tilde v-\tilde v_*)| |\tv-\tv_*|^{-1} | |v-v_*|^{\gamma}.
  \end{align}
Thus, 
\begin{align*}
    &\int_0^{K\min\{x_K^\gamma, \tilde{x}_K^\gamma\}}\int_0^{2\pi} |c_K-\tilde c_K|\, d\varphi dz \le C |v-v_*-(\tilde v-\tilde v_*)||v-v_*|^\gamma.
\end{align*}
By symmetry, we obtain
\begin{align}\label{half}
    \int_0^{K\min\{x_K^\gamma, \tilde{x}_K^\gamma\}}\int_0^{2\pi} |c_K-\tilde c_K| \, d\varphi dz
    \le C |v-v_*-(\tilde v-\tilde v_*)|\min\{|v-v_*|^\gamma,|\tv-\tv_*|^\gamma\}.
\end{align}

On the other hand, if $x_K^\gamma = \tilde{x}_K^\gamma$, then
 \begin{align*}
    |c_K-\tilde c_K|=0, \quad \text{for all}\; z\ge K\min\{x_K^\gamma, \tilde{x}_K^\gamma\},
\end{align*}
and the proof is complete. Otherwise,  by symmetry, we may assume that $x_K^\gamma<\tilde{x}_K^\gamma$, so that  $|v-v_*|^\gamma < \tilde{x}_K^\gamma\le K$. It follows that $x_K^\gamma=|v-v_*|^\gamma,$
and therefore
$c_K=0$ for  $z\ge K|v-v_*|^\gamma$. Then recalling \eqref{num},  we have
\begin{align*}
  \int^\infty_{K\min\{x_K^\gamma, \tilde{x}_K^\gamma\}} \int_0^{2\pi}|c_K-\tilde c_K|\, d\varphi dz=2\pi\int^\infty_{K|v-v_*|^\gamma}|\tilde c_K| dz \le C |\tv-\tv_*|^{1+\gamma}\int^\infty_{K|v-v_*|^\gamma\tilde{x}_K^{-\gamma}} G(z) dz,
\end{align*}
which  is bounded by as follows (using \eqref{con1}):
\begin{align*}
\frac{C}{K^{1/\nu-1}} |v-v_*|^{\gamma-\frac{\gamma}{\nu}}|\tv-\tv_*|^{1+\frac{\gamma}{\nu}}&\le \frac{C}{K^{1/\nu-1}}( |v-v_*|^{1+\gamma}+|\tv-\tv_*|^{1+\gamma}),
\end{align*}
due to  $\min\{\gamma-\frac{\gamma}{\nu}, 1+\frac{\gamma}{\nu}\}> 0$ and Young's inequality. We thus have 
\[ \int^\infty_{K\min\{x_K^\gamma, \tilde{x}_K^\gamma\}} \int_0^{2\pi}|c_K-\tilde c_K|\, d\varphi dz\le\frac{C}{K^{1/\nu-1}}(1+|v-v_*|+|\tv-\tv_*|),\]
which combining with \eqref{half} leads to  \eqref{ineq: KKp22cut3}.

\vip

{\bf Step 3.}
Let us denote 
$$I_K:=\int_0^{K\min\{x_K^\gamma, \tilde {x}_K^\gamma\}}\int_0^{2\pi} |c_K-\tilde c_K|^2 d\varphi dz, \quad  J_K:=\int^\infty_{K\min\{x_K^\gamma, \tilde {x}_K^\gamma\}}\int_0^{2\pi} |c_K-\tilde c_K|^2 d\varphi dz.$$
We first prove
\begin{align*}
I_K \le  C |v-v_*-(\tilde v-\tilde v_*)|^2 \min\{|v-v_*|^\gamma,|\tv-\tv_*|^\gamma\}.
\end{align*}
 For $ z\le K\min\{x_K^\gamma, \tilde {x}_K^\gamma\}$, proceeding as in the proof  \cite[Lemma 3.1, P. 603]{MR3456347}, we see that 
 $$I_K\le A_1^K+A_2^K,$$
 where 
 \begin{align*}
 &A_1^K=2|v-v_*||\tv-\tv_*|\int_0^{K\min\{x_K^\gamma, \tilde {x}_K^\gamma\}}\big(G(z/x_K^\gamma) -G(z/\tilde{x}_K^\gamma )\big)^2,\\
 &A_2^K=\pi \int_0^{K\min\{x_K^\gamma, \tilde {x}_K^\gamma\}} (v-v_*-\tv+\tv_*)\cdot\Big[(v-v_*)\big(1-\cos G(z/x_K^\gamma)\big)\\
 &\hskip7cm -(\tv-\tv_*)(1-\cos G\big(z/\tilde{x}_K^\gamma)\big)\Big]\, dz.
 \end{align*}
 First, using \eqref{ineq}, we immediately obtain
\begin{align*}
A_1^K &\le 2|v-v_*||\tv-\tv_*| \int_0^\infty\big(G(z/x_K^\gamma)-G(z/\tilde{x}_K^\gamma)\big)^2 dz\\
&\le  2c_4|v-v_*||\tv-\tv_*|\frac{\big(x_K^\gamma-\tilde{x}_K^\gamma\big)^2}{x_K^\gamma+\tilde{x}_K^\gamma}\\
&\le C|v-v_*||\tv-\tv_*|(x_K\tx_K)^{-1}(x_K-\tilde x_K)^2 \min\{x_K^{\gamma},\tilde x_K^{\gamma}\}\\
    &\le C |v-v_*-(\tilde v-\tilde v_*)|^2\min\{ |v-v_*|^\gamma,|\tilde v-\tilde v_*|^\gamma\}.
\end{align*}
To justify the third inequality,  since  $|x_K^\gamma -  \tilde{x}_K^\gamma|\le C|x_K^{-1} - \tilde{x}_K^{-1}|\min\{x_K^{\gamma+1}, \tilde{x}_K^{\gamma+1}\}$. Hence,
\[
\frac{(x_K^\gamma - \tilde{x}_K^\gamma)^2}{x_K^\gamma+\tilde x_K^\gamma}
\le C\frac{(x_K-\tilde x_K)^2} {(x_K\tilde x_K)^2(x_K^\gamma+\tilde x_K^\gamma)}
(\min\{x_K,\tilde x_K\})^{2\gamma+2}.
\]
Moreover, a simple verification shows that 
\[
x_K^\gamma+\tilde x_K^\gamma
\ge \max\{x_K^\gamma,\tilde x_K^\gamma\}=(\min\{x_K,\tilde x_K\})^\gamma,
\]
and
\[ 
(\min\{x_K,\tilde x_K\})^{\gamma+2} \le x_K\tilde x_K(\max\{x_K,\tilde x_K\})^\gamma.\]
Consequently, we obtain
\[
\frac{(x_K^\gamma-\tilde x_K^\gamma)^2}{x_K^\gamma+\tilde x_K^\gamma}(\min\{x_K,\tilde x_K\})^{2\gamma+2}
\le C(x_K\tilde x_K)^{-1}(x_K-\tilde x_K)^2\min\{x_K^\gamma,\tilde x_K^\gamma\},
\]
which  precisely implied  the desired inequality.

\vip

On the other hand, since $1-\cos \theta\le \theta^2/2$, it follows that 
\begin{align*}
   & \Big|(v-v_*)\big(1-\cos G(z/x_K^\gamma)\big)-(\tv-\tv_*)\big(1-\cos G(z/ \tilde{x}_K^\gamma)\big)\Big|\\
    =& |v-v_*-\tv+\tv_*|\big(1-\cos G(z/\tilde{x}_K^\gamma)\big)
    +|v-v_*||\cos G(z/ \tilde{x}_K^\gamma)-\cos G(z/x_K^\gamma)|\\
    \le& C |v-v_*-\tv+\tv_*| G^2(z/ \tilde{x}_K^\gamma)+|v-v_*|| G\big(z/ \tilde{x}_K^\gamma\big)-G\big(z/x_K^\gamma\big)|.
\end{align*}
Furthermore, \eqref{con1} implies that
$\int_0^\infty G^2(z/ \tilde{x}_K^\gamma) dz\le C\tilde{x}_K^\gamma$, which together with   \eqref{G-gam} yields
\[A_2^K\le C |v-v_*-(\tilde v-\tilde v_*)|^2|\tilde v-\tilde v_*|^\gamma,\]
By symmetry,  
\[A_2^K\le C |v-v_*-(\tilde v-\tilde v_*)|^2\min\{ |v-v_*|^\gamma,|\tilde v-\tilde v_*|^\gamma\}.\]
Therefore,
\begin{align*}
I_K\le A_1^K+A_2^K\le C |v-v_*-(\tilde v-\tilde v_*)|^2\min\{ |v-v_*|^\gamma,|\tilde v-\tilde v_*|^\gamma\}
\end{align*}
as desired.
\vip
Finally, we estimate $J_K$.  The argument is analogous to that used  in Step 2 for
$$\int_{K\min\{x_K^\gamma,\tilde x_K^\gamma\}}^\infty
\int_0^{2\pi}|c_K-\tilde c_K|\,d\varphi\,dz.$$
If $x_K^\gamma=\tilde x_K^\gamma$, the conclusion is immediate. Otherwise, by symmetry, assume that $x_K^\gamma<\tilde x_K^\gamma$, so that $x_K^\gamma=|v-v_*|^\gamma$ and  $c_K=0$ for $z\ge K|v-v_*|^\gamma$.
Recalling \eqref{num} and the definition of \(\tilde c_K\), we obtain
\[
 \int^\infty_{K|v-v_*|} \int_0^{2\pi} |\tilde{c}_K|^2 d\varphi dz= \pi|\tv-\tv_*|^2  \int^\infty_{K|v-v_*|}  \big(1-\cos G(z/\tilde{x}_K^\gamma)\big)\indiq_{\{z\le K\tilde{x}_K^\gamma\}}\, dz,
\]
Using \eqref{con1}, the right hand side is bounded by 
\begin{align*}
   C |\tv-\tv_*|^{2+\gamma}  \int^\infty_{K|v-v_*|^\gamma/ \tilde{x}_K^{\gamma}} G^2(z)\,  dz
\le  \frac{C}{K^{2/\nu-1}} |v-v_*|^{\gamma-\frac{2\gamma}{\nu}}|\tv-\tv_*|^{2+\frac{2\gamma}{\nu}}.
\end{align*}
Since $\min\{\gamma-\frac{2\gamma}{\nu},  2+\frac{2\gamma}{\nu}\}> 0$, an application of   Young's inequality yields 
\[ J_K\le \frac{C}{K^{2/\nu-1}}( |v-v_*|^{2+\gamma}+|\tv-\tv_*|^{2+\gamma})\le \frac{C}{K^{1/\nu-1}}(1+ |v-v_*|^{2}+|\tv-\tv_*|^{2}).\]
Combining the estimates for $I_K$ and $J_K$,  we conclude that \eqref{ineq: KKp22cut} holds.

\vip

{\bf Step 4.} Finally, we establish \eqref{ineq: KKp22cut2}. First,  recalling  \eqref{num}, we have 
\[|c|^2=\frac{1-\cos G(z/|v-v_*|^\gamma)}{2}|v-v_*|^2,\; \; |c_K|^2=\frac{1-\cos G(z/x_K^\gamma)}{2}|v-v_*|^2\indiq_{\{z\le Kx_K^\gamma\}}.\] 
Observe that if $|v-v_*|^\gamma\le K$, then $c=c_K$ for $z\le Kx_K^\gamma$, while  $c_K=0$ for $z> Kx_K^\gamma$. 
Consequently, under the condition $|v-v_*|^\gamma\le K$, we have
\begin{align*}
\int_0^\infty\int_0^{2\pi}\Big|c-c_K\Big|^2d\varphi dz &= \int_{Kx_K^\gamma}^\infty\int_0^{2\pi}|c|^2d\varphi\, dz\\
&\le C |v-v_*|^2\int_{K|v-v_*|^\gamma}^\infty G^2(z/|v-v_*|^\gamma)\,dz \\
&\le C |v-v_*|^{2+\gamma} K^{1-2/\nu},
\end{align*}
where the last inequality follows from \eqref{con1} via the estimate  $\int_K^\infty G^2(z) \, dz \le CK^{1-2/\nu}$.

\vip

Conversely, when $|v-v_*|^\gamma> K,$ equivalently $|v-v_*|<K^{1/\gamma}$, Lemma \ref{lem:c} yields
\begin{align*}
\int_0^\infty\int_0^{2\pi}\big|c-c_K \big|^2d\varphi dz\le 2\int_0^\infty\int_0^{2\pi}(|c|^2+|c_K|^2)\, d\varphi dz \le C|v-v_*|^{2+\gamma}\le CK^{1+2/\gamma}.
\end{align*}
Hence, for all $v,v_*\in\R^3,$ we obtain 
\begin{align*}
    \int_0^\infty\int_0^{2\pi}\big|c-c_K \big|^2d\varphi dz\le CK^{1+2/\gamma}+C |v-v_*|^{2+\gamma} K^{1-2/\nu}.
\end{align*}
Gathering this with \eqref{ineq: KKp22cut}, we deduce that 
\begin{align*}
\int_0^\infty\int_0^{2\pi}\big| \tilde\Delta_{c,c_K} \big|^2 \, d\varphi dz&\le 2\int_0^\infty\int_0^{2\pi}\Big( \big|c-c_K\big|^2 + \big|\Delta_{c_K}\big|^2 \Big)d\varphi dz\\
    &\le C |v-v_*-(\tilde v-\tilde v_*)|^2\min\{ |v-v_*|^\gamma,|\tilde v-\tilde v_*|^\gamma\}\\
    &\hskip2cm+\frac{C}{K^{-1-2/\gamma}}+\frac{C}{K^{2/\nu-1}}(1+|v-v_*|^{2}+|\tv-\tv_*|^{2}).
\end{align*}
In addition, in view of\eqref{pa-range}, we find that $\max\{K^{1+2/\gamma}, K^{1-2/\nu}\}\le K^{1-1/\nu}$. Therefore,  \eqref{ineq: KKp22cut2} follows.

\end{proof}

\begin{lem}\label{fundest}
Under the same condition as Lemma \ref{lem: 3ccut}, we recall \eqref{atc} and  \eqref{CK}.  Then there exists  $C>0$ independent of $K$, such that  for any $v,v_*,\tv,\tv_*,y \in \rd$, and any $K\in [1,\infty)$,
\begin{align}\label{ineq: vc-vcK}
&\int_0^\infty \int_0^{2\pi} \Big( 
\big|v+c(v,v_*,z,\varphi)-\tv-c_K(\tv,\tv_*,z,\varphi+\hat \varphi) \big|^2 - |v-\tv|^2
\Big) d\varphi   dz\nonumber\\
\leq& C|v-\tilde v|^2 (1+|\tilde v-\tilde v_*-y|^\gamma)
   +C\Big(|y|^{2+2\gamma}+|y|^{2+\gamma}+|v_*-\tilde v_*|^2 |\tilde v-\tilde v_*-y|^\gamma\Big)\nonumber\\
   &\hskip3.87cm\quad+CK^{1-1/\nu}(1+|v|^2+|\tv|^2+|v_*|^2+|\tv_*|^2+|y|^2),
\end{align}
where $\hat \varphi:=\varphi^1+\varphi^2+\varphi^3$ with
\begin{align*}
\varphi^1:=&\varphi_{0}\big( v-v_*-y,
v-v_*\big),\\
\varphi^2:=&\varphi_{0}\big(\tv-\tv_*-y,
v-v_*-y\big),\\
\varphi^3:=&\varphi_{0}\big(\tv-\tv_*,\tv-\tv_*-y\big).
\end{align*}
\end{lem}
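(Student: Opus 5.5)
Starting from the pointwise identity
\[
|v+c-\tv-\tilde c_K|^2-|v-\tv|^2 = 2(v-\tv)\cdot(c-\tilde c_K)+|c-\tilde c_K|^2,
\]
where $c=c(v,v_*,z,\varphi)$ and $\tilde c_K=c_K(\tv,\tv_*,z,\varphi+\hat\varphi)$, the plan is to bound the integral of the right-hand side in $(z,\varphi)$. The cross term only involves $\int\!\!\int(c-\tilde c_K)\,d\varphi\, dz$, which is rotation invariant in $\varphi$. The quadratic term needs the careful phase choice $\hat\varphi$. The reason $\hat\varphi$ is a sum of three Tanaka phases is that we go through two intermediate relative velocities, $v-v_*-y$ and $\tv-\tv_*-y$. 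Writing $x=v-v_*$ and $\tilde x=\tv-\tv_*$, we insert the chain
\[
x\;\to\; x-y\;\to\;\tilde x-y\;\to\;\tilde x,
\]
and use \eqref{Gamma} on each link. Concretely, by the translation $\varphi\mapsto\varphi+\varphi^1$ etc., $c-\tilde c_K$ is split as
\[
[c(x)-c(x-y)]+[c(x-y)-c_K(\tilde x-y)]+[c_K(\tilde x-y)-c_K(\tilde x)],
\]
with the appropriate accumulated phases. Here $c(\cdot)$ is understood as a function of the relative velocity only, which is valid since $c$ depends on $(v,v_*)$ only through $v-v_*$.

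\textbf{Drift term.} By the explicit formulas from Step 1 of Lemma \ref{lem: 3ccut}, $\int\!\!\int c=-\kappa\,x|x|^\gamma$ and $\int\!\!\int \tilde c_K=-\kappa_K\,\tilde x(|\tilde x|^\gamma\wedge K)$. Instead of comparing $x$ and $\tilde x$ directly, we compare through $\tilde x-y$. The difference $x-\tilde x$ equals $(v-\tv)-(v_*-\tv_*)$. We then use \eqref{ineq: KKp22cut11} (with pairs $(v,v_*)$ and $(\tv,\tv_*+y)$), which gives a bound of the form
\[
C|v-\tv|\,|x-\tilde x+y|\,|\tilde x-y|^\gamma+\ldots,
\]
together with an estimate $|\,x|x|^\gamma - (x-y)|x-y|^\gamma| \lesssim |y|^{1+\gamma}$ for the shift by $y$. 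The latter follows from the H\"older continuity of order $1+\gamma\in(0,1)$ of the map $u\mapsto u|u|^\gamma$. Multiplying by $|v-\tv|$ and using Young's inequality, we obtain
\[
|v-\tv|\,|y|^{1+\gamma}\le |v-\tv|^2+|y|^{2+2\gamma},
\]
and the term $|v-\tv|\,|v_*-\tv_*|\,|\tilde x-y|^\gamma$ is split by Young into the $|v-\tv|^2|\tilde x-y|^\gamma$ and $|v_*-\tv_*|^2|\tilde x-y|^\gamma$ contributions. The $K$-errors from \eqref{ineq: KKp22cut11} times $|v-\tv|$ are absorbed into $CK^{1-1/\nu}(1+|v|^2+\ldots)$.

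\textbf{Quadratic term.} Using $(a+b+c)^2\le3(a^2+b^2+c^2)$, there are three pieces.
\begin{enumerate}
\item The middle piece $c(x-y)-c_K(\tilde x-y)$ is controlled by \eqref{ineq: KKp22cut2}, giving
\[
C|x-\tilde x|^2|\tilde x-y|^\gamma + CK^{1-1/\nu}(1+|x-y|^2+|\tilde x-y|^2),
\]
and $|x-\tilde x|^2\le 2|v-\tv|^2+2|v_*-\tv_*|^2$ yields the desired terms.
\item The piece $c(x)-c(x-y)$ is controlled by Remark \ref{inequ:c}, giving $C|y|^2\min\{|x|^\gamma,|x-y|^\gamma\}$. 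This is not yet of the allowed form. The plan is to bound it instead by $C|y|^{2+\gamma}$ via a case split. If $|x|\le 2|y|$ and $|x-y|\le 2|y|$, use the crude bound from Lemma \ref{lem:c}, namely $C(|x|^{2+\gamma}+|x-y|^{2+\gamma})\le C|y|^{2+\gamma}$. Otherwise, one of $|x|$, $|x-y|$ exceeds $|y|$, so $\min\{|x|^\gamma,|x-y|^\gamma\}\le |y|^\gamma$.
\item The piece $c_K(\tilde x-y)-c_K(\tilde x)$ is treated the same way, with \eqref{ineq: KKp22cut} and Lemma \ref{lem:c}(ii), plus a $K$-error.
\end{enumerate}

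\textbf{Main obstacle.} The step I expect to be most delicate is the drift term. One must avoid producing a factor $|v-\tv|\,|y|\,|\cdot|^\gamma$ with an uncontrolled negative power. This is why the shift by $y$ is done on the function $u\mapsto u|u|^\gamma$, whose H\"older bound $|y|^{1+\gamma}$ needs no singular weight. The cutoff version $u(|u|^\gamma\wedge K)$ is also $(1+\gamma)$-H\"older uniformly in $K$, and the constants $\kappa_K-\kappa$ give only $K^{1-2/\nu}$ errors, as in Step 1 of Lemma \ref{lem: 3ccut}. Bookkeeping of the $K$-errors, using $|x|,|\tilde x|\le |v|+|v_*|$ etc., then gives the final line of \eqref{ineq: vc-vcK}.
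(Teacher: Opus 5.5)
Your overall strategy is the paper's. The paper inserts exactly the intermediate jumps $c(v,v_*+y,z,\varphi+\varphi^1)$ and $c_K(\tv,\tv_*+y,z,\varphi+\varphi^1+\varphi^2)$. It bounds the middle link by \eqref{ineq: KKp22cut11} and \eqref{ineq: KKp22cut2}, and the two end links by Remark~\ref{inequ:c}, \eqref{ineq: KKp22cut1} and \eqref{ineq: KKp22cut}. It then uses $\min\{|a|^\gamma,|a+b|^\gamma\}\le 2^{-\gamma}|b|^\gamma$, which is \eqref{ineq:ab}. Your case split in the quadratic end links is a longer route to the same bound, since $\max\{|x|,|x-y|\}\ge |y|/2$ always holds. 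Your treatment of the quadratic term is fine.

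\textbf{The drift paragraph does not work as written.} You apply \eqref{ineq: KKp22cut11} to the pairs $(v,v_*)$ and $(\tv,\tv_*+y)$, so you compare $x$ directly with $\tilde x-y$, and you replace the minimum by $|\tilde x-y|^\gamma$. Expanding $|x-\tilde x+y|\le |v-\tv|+|v_*-\tv_*|+|y|$ then produces the term $|v-\tv|\,|y|\,|\tilde x-y|^\gamma$.

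\begin{itemize}
\item You never treat this term.
\item It is not controlled by the right-hand side of \eqref{ineq: vc-vcK}. Take $v_*=\tv_*=0$, $|y|=1$, $|v-\tv|=\delta$ and $|\tilde x-y|^\gamma=\delta^{-2}$. The term equals about $\delta^{-1}$, while the right-hand side stays bounded as $\delta\to0$.
\item It is precisely the factor your last paragraph says must be avoided.
\item Combined with the shift $x\to x-y$ that you also invoke, the chain no longer closes.
\end{itemize}

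The repair is to use your own three-link chain for the drift as well. Apply \eqref{ineq: KKp22cut11} to $(v,v_*+y)$ and $(\tv,\tv_*+y)$. Then the middle factor is $|x-\tilde x|\le |v-\tv|+|v_*-\tv_*|$, and Young's inequality gives $C(|v-\tv|^2+|v_*-\tv_*|^2)|\tilde x-y|^\gamma$ plus the $K$-error. Each end link contributes $C|v-\tv|\,|y|^{1+\gamma}$.

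Since the drift does not depend on the phases, a two-link comparison $x\to\tilde x-y\to\tilde x$ would also work. But it only works if you keep $\min\{|x|^\gamma,|\tilde x-y|^\gamma\}$ and split according to whether $|y|\le 2(|v-\tv|+|v_*-\tv_*|)$. With this correction your proof coincides with the paper's.

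One caveat applies equally to the paper's proof. Applying \eqref{Gamma} link by link gives the phases $\varphi_0(x,x-y)$, $\varphi_0(x-y,\tilde x-y)$ and $\varphi_0(\tilde x-y,\tilde x)$. These have their arguments in the reverse order from the stated $\varphi^1,\varphi^2,\varphi^3$; equivalently, the stated $\hat\varphi$ would have to enter with a minus sign. So you should write out ``the appropriate accumulated phases'' explicitly rather than take them from the statement.
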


\begin{proof}
For simplification, we introduce
\begin{align*}
&c_v:=c(v,v_*, z,\varphi),\\
&c^1_v:=c(v,v_*+y, z,\varphi+\varphi^1),\\
&c^{12}_{K,\tv}:=c_K(\tv,\tv_*+y, z,\varphi+\varphi^1+\varphi^2),\\
&\hat c_{K,\tv}:=c_K(\tv,\tv_*, z, \varphi+\hat\varphi),
\end{align*}
and denote by 
\[I:=\int_0^\infty \int_0^{2\pi} \Big( 
\big|v+c_v-\tv-\hat c_{K,\tv} \big|^2 - |v-\tv|^2
\Big) d\varphi   dz.\]
Then, 
\begin{align*}
   I&= \int_0^\infty \int_0^{2\pi} \Big( 
2(v-\tv)\cdot(c_v-\hat c_{K,\tv})+|c_v-\hat c_{K,\tv}|^2
\Big) d\varphi   dz\\
&\le 2|v-\tv| (I_{11}+I_{12}+I_{13})+4(I_{21}+I_{22}+I_{23}),
\end{align*}
where 
\begin{align*}
&I_{11}:=  \Big| \int_0^\infty \int_0^{2\pi}(c_v-c^1_v)d\varphi dz\Big|,\quad\quad \quad
I_{12}:= \Big|\int_0^\infty\int_0^{2\pi}(c^1_v-c^{12}_{K,\tv})d\varphi dz\Big|,\\
&I_{13}:=\Big|\int_0^\infty\int_0^{2\pi}(c^{12}_{K,\tv}-\hat c_{K,\tv})d\varphi dz\Big|,\quad
I_{21}:=\int_0^\infty \int_0^{2\pi} |c_v-c^1_v|^2d\varphi   dz,\\
&I_{22}:=  \int_0^\infty \int_0^{2\pi} |c^1_v-c^{12}_{K,\tv}|^2d\varphi   dz,  \hskip0.8cm I_{23}:= \int_0^\infty \int_0^{2\pi} |c^{12}_{K,\tv}-\hat c_{K,\tv}|^2d\varphi   dz.
\end{align*}
Recalling the elementary inequality 
\begin{align*}
    \max\{|a|,|a+b|\}\ge |b|/2, \quad \forall\,   a,b\in\R^3,
\end{align*}
and using that $\gamma \in (-1,0)$, we obtain
  \beq\label{ineq:ab}
  \min\{|a|^\gamma,|a+b|^\gamma\}\le 2^{-\gamma}|b|^\gamma.
  \eeq  
Applying Remark \ref{inequ:c}, \eqref{ineq:ab} and \eqref{ineq: KKp22cut1},  we obtain
\[
   I_{11}\le C|y|\min\{|v-v_*|^{\gamma},|v-v_*-y|^{\gamma}\}\le C|y|^{1+\gamma},
 \]
 and
  \[
I_{13}\le C|y|\min\{|\tv-\tv_*|^{\gamma},|\tv-\tv_*-y|^{\gamma}\}\le C|y|^{1+\gamma}.
\]
Furthermore, \eqref{ineq: KKp22cut11} yields
\[I_{12}\le C|v-v_*-(\tilde v-\tilde v_*)| \min\{ |v-v_*-y|^\gamma,|\tilde v-\tilde v_*-y|^\gamma\}
+CK^{1-1/\nu}(1+|v-v_*-y|^{1+\gamma}).\]
Hence,
\begin{align*}
  &2|v-\tv| (I_{11}+I_{12}+I_{13})\\
    &\le C|v-\tv|\Big[|y|^{1+\gamma}+|v-v_*-(\tilde v-\tilde v_*)| \min\{ |v-v_*-y|^\gamma,|\tilde v-\tilde v_*-y|^\gamma\}\\
&\hskip5cm +CK^{1-1/\nu}(1+|v-v_*-y|^{1+\gamma})\Big]\\
   & \le C|v-\tilde v|^2 (1+|\tilde v-\tilde v_*-y|^\gamma)
   +C\Big(|y|^{2+2\gamma}+|v_*-\tilde v_*|^2 |\tilde v-\tilde v_*-y|^\gamma\Big)\\
   &\hskip5cm+\frac{C(1+|\tv|^2+|v|^2+|v_*|^2+|y|^2)}{K^{1/\nu-1}}.
\end{align*}
Similarly, utilizing  Remark \ref{inequ:c}, \eqref{ineq:ab} and \eqref{ineq: KKp22cut}, we have
\begin{align*}
    I_{21}\le C|y|^2\min\{|v-v_*|^{\gamma},|v-v_*-y|^{\gamma}\}\le C |y|^{2+\gamma},
    \end{align*}
and 
\begin{align*}
 I_{23}&\le  C|y|^2\min\{|\tv-\tv_*|^{\gamma},|\tv-\tv_*-y|^{\gamma}\}
+\frac{C}{K^{1/\nu-1}}(1+ |\tv-\tv_*-y|^{2}+|\tv-\tv_*|^{2})\\
 &\le C|y|^{2+\gamma}+\frac{C}{K^{1/\nu-1}}(1+ |\tv-\tv_*-y|^{2}+|\tv-\tv_*|^{2}).
\end{align*}
In addition, \eqref{ineq: KKp22cut2} implies 
  \begin{align*}
  I_{22}&\le C|v-v_*-(\tilde v-\tilde v_*)|^2 \min\{ |v-v_*-y|^\gamma,|\tilde v-\tilde v_*-y|^\gamma\}\\
&\hskip5cm+\frac{C}{K^{1/\nu-1}}(1+ |v-v_*-y|^{2}+|\tv-\tv_*-y|^{2}).
\end{align*}
Combining the above estimates, we deduce that
\begin{align*}
  & 4(I_{21}+I_{22}+I_{23})\\
     & \le C\Big(|y|^{2+\gamma}+|v-v_*-(\tilde v-\tilde v_*)|^2 \min\{ |v-v_*-y|^\gamma,|\tilde v-\tilde v_*-y|^\gamma\}\Big)\\
  &\hskip5cm+\frac{C(1+|v|^2+|\tv|^2+|v_*|^2+|\tv_*|^2+|y|^2)}{K^{1/\nu-1}}\\
   &\le C|v-\tilde v|^2 |\tilde v-\tilde v_*-y|^\gamma
  +C\Big(|y|^{2+\gamma}+|v_*-\tilde v_*|^2 |\tilde v-\tilde v_*-y|^\gamma\Big)\\
   &\hskip5cm+\frac{C(1+|v|^2+|\tv|^2+|v_*|^2+|\tv_*|^2+|y|^2)}{K^{1/\nu-1}}.
\end{align*}
Collecting all the above estimates, we conclude that \eqref{ineq: vc-vcK} holds.
\end{proof}

\section{Uniqueness}

Consider a weak solution $(\bV^N_t)_{t\in[0,T]}$  of  \eqref{mainsde}, let $F_t^N$ be its law for $t\in[0,T]$. In this section, we prove that $(\bV^N_t)_{t\in[0,T]}$ is pathwise unique. We first investigate some properties of $\bV$ via the Fisher information of its law  $(F_t^N)_{t\in[0,T]}$.  
\vip

\subsection{\texorpdfstring{Some properties of $\bV$}{Some properties of V}}
Let us first recall the following result.

\begin{theo}[Theorem 4.1, \cite{Fournier2025}]\label{Fini-Fisher}
Assume \eqref{con} and \eqref{pa-range}. Let $f_0\in L^1(\bb{R}^{3})\cap \ca{P}_{2}(\bb{R}^{3})$ have finite Fisher information, i.e. $I_1(f_0)<\infty$.
For  $N\ge2$, there exists, an exchangeable solution $(\bV^N_t)_{t\ge 0}=(V_t^1,\dots,V_t^N)_{t\ge0}$  to \eqref{mainsde}.
Moreover, the system is almost surely conservative: for all $t\ge 0$,
\[
\sum_{i=1}^N V^{i}_t=\sum_{i=1}^N V^{i}_0,
\qquad
\sum_{i=1}^N|V^{i}_t|^2=\sum_{i=1}^N|V^{i}_0|^2.
\]
Furthermore, for any $t\ge0$, let $F_t^N\in\ca{P}(\bb{R}^{3N})$ be the law of $\bV^N_t=(V_t^1,\dots,V_t^N)$, then $F_0^N=f_0^{\otimes N}$ and  $F_t^N$ solves equation \eqref{eq:  wmaster} and has finite Fisher information:
\begin{align}\label{Fis}
        I_N(F_t^N)\le I_N(F_0^N)=NI_1(f_0).
    \end{align}
\end{theo}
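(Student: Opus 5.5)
The plan is to build the solution as a limit of cutoff particle systems, to propagate the Fisher information bound uniformly along the approximation, and then to pass to the limit in a martingale formulation. The Fisher information bound is what controls the singular factor $|v_i-v_j|^\gamma$.

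\textbf{Cutoff system.} For $K\ge1$, I replace $c$ by $c_K$ from \eqref{CK} in \eqref{mainsde}. Since $\int_{\Sp^2}B_K\,d\sigma\le CK^2$, each pair jumps at a bounded rate. The system $\bV^{N,K}$ is therefore a well-defined finite-activity jump process with exactly one strong solution, obtained by constructing the jumps one at a time. Each jump acts on a pair $(i,j)$ by the map $(v_i,v_j)\mapsto(v',v_*')$ of \eqref{Bol1}, which preserves $v_i+v_j$ and $|v_i|^2+|v_j|^2$. Hence $\sum_i V^{i,K}_t$ and $\sum_i|V^{i,K}_t|^2$ are almost surely constant in $t$. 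Since $f_0^{\otimes N}$ is symmetric and $O_{ij}=O_{ji}$ with i.i.d.\ driving measures, the law $F^{N,K}_t$ is exchangeable. It solves the cutoff version of \eqref{eq:  wmaster}.

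\textbf{Fisher information of the cutoff law.} The key step is to show $I_N(F^{N,K}_t)\le I_N(f_0^{\otimes N})=NI_1(f_0)$ uniformly in $K$.
\begin{itemize}
\item For a single pair $(i,j)$, with the other $N-2$ velocities frozen, the operator $\cL^{ij}_N$ in \eqref{eq: master} is the linear two-particle Boltzmann collision operator acting on functions of $(v_i,v_j)\in\R^6$.
\item The result of Imbert--Silvestre--Villani \cite{IMBERT24} is essentially that this six-dimensional linear operator does not increase Fisher information, provided the angular kernel satisfies their condition on $b$.
\item The Fisher information $I_N$ splits into the contributions of the $(v_i,v_j)$-gradient plus the remaining coordinates. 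The pair operator commutes with derivatives in the frozen variables, uses rotation invariance in the remaining directions, and acts only through $(v_i,v_j)$. Summing the nonpositive dissipation over pairs should give $\frac{d}{dt}I_N(F^{N,K}_t)\le 0$.
\end{itemize}
I expect this to be the main obstacle. The sharp angular truncation $\indiq_{\{\theta\ge G(K)\}}$ and the velocity truncation $r^\gamma\wedge K$ may violate the structural hypothesis of \cite{IMBERT24}. If they do, I would first replace the truncation by a smooth one: a monotone regularisation of $b$ and of $r^\gamma$ that still satisfies the ISV condition for every $K$, with a bounded total rate. I would then check the monotonicity for that family, assuming as needed that $F^{N,K}_t$ is smooth enough, for example by first mollifying $f_0$.

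\textbf{Tightness and passage to the limit.} The bound on $I_N$ and \eqref{I-marg} give $I_2(F^{N:2,K}_t)\le 2I_1(f_0)$. In $\R^6$, finite Fisher information gives $\sqrt{F^{N:2,K}_t}\in H^1$, hence an $L^p$ bound on $F^{N:2,K}_t$ for some $p>1$ via Sobolev. Combined with energy conservation, this yields
\[
\sup_{K}\sup_{t\le T}\E\big[|V^{1,K}_t-V^{2,K}_t|^{\gamma}\big]<\infty,
\]
because $\gamma>-1>-3$.
\begin{itemize}
\item With Lemma~\ref{lem:c}(ii), this gives uniform control of the jump activity $\E\int_0^T|V^{1,K}_s-V^{2,K}_s|^{\gamma+1}ds$. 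Together with the second-moment bounds, the Aldous criterion gives tightness of the laws of $(\bV^{N,K}_t)_{t\in[0,T]}$ in the Skorokhod space.
\item For a limit point, I pass to the limit in the martingale problem associated with $\cL_N$, using test functions $\phi\in\ca C_b^2$. The truncation error is handled by the $K^{1-1/\nu}$ terms of Lemma~\ref{lem: 3ccut}, and the singularity near $v_i=v_j$ by the uniform negative-moment bound (uniform integrability).
\item A standard representation theorem for solutions of martingale problems driven by Poisson random measures, possibly after enlarging the probability space, produces a weak solution of \eqref{mainsde}.
\end{itemize}

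\textbf{Properties of the limit.} Exchangeability and the almost sure conservation identities are closed conditions, so they pass to the limit. The bound \eqref{Fis} follows from lower semicontinuity of $I_N$ under weak convergence together with the uniform bound, with \eqref{tensor} giving the equality $I_N(F_0^N)=NI_1(f_0)$. That $F^N_t$ solves \eqref{eq:  wmaster} follows from It\^o's formula applied to the limiting SDE.
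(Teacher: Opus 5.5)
The paper gives no proof of this statement; it quotes it from \cite{Fournier2025}. Your architecture is the natural one: a cutoff system, a fibrewise use of the Imbert--Silvestre--Villani dissipation inequality for each pair to get $I_N(F^{N,K}_t)\le NI_1(f_0)$ uniformly in $K$, compactness, and lower semicontinuity of $I_N$. The monotonicity step is sound in principle, because the ISV inequality for the two-particle linear operator holds for general symmetric densities on $\R^6$, not only for tensor products. It therefore applies to the $(v_i,v_j)$-sections of the exchangeable $F^{N,K}_t$. The paper itself invokes \cite[Theorem~1.5]{IMBERT24} for the truncated kernel $B_K$ in Section~4, so the sharp truncation is not an obstruction at the level of hypotheses.

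\textbf{The step that fails as written is your negative-moment bound.} Sobolev on $\R^6$ only gives $F^{N:2,K}_t\in L^{3/2}(\R^6)$. That, plus second moments, does not control $\int|v_1-v_2|^\gamma F$ once $\gamma<-2/3$. The reason is that the singular set $\{v_1=v_2\}$ is unbounded in the centre-of-mass direction, and energy does not stop mass from sitting near it far away. Concretely, put mass $a_n$ uniformly on $\{|v_1-v_2|<\varepsilon_n,\ |v_2|<R_n\}$, with $\varepsilon_n=a_n^{1/|\gamma|}$ and $a_nR_n^2=2^{-n}$. Then:
\begin{itemize}
\item each piece contributes the fixed constant $3/(3+\gamma)$ to $\int|v_1-v_2|^\gamma F$;
\item its second moment is $O(2^{-n})$;
\item its $L^{3/2}(\R^6)$ norm is of order $a_n^{3/2-1/|\gamma|}2^{n/2}$, which is summable for $a_n=2^{-Mn}$ with $M$ large whenever $|\gamma|>2/3$.
\end{itemize}
The right tool is Hardy's inequality in the relative variable, fibre by fibre, exactly as in Lemma~\ref{cor: -2}. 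It gives $\E[|V^{1,K}_t-V^{2,K}_t|^{-2}]\le I_2(F^{N:2,K}_t)\le 2I_1(f_0)$ on the whole range $\gamma\in(-1,0)$.

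In this parameter range you in fact need no negative moment to pass to the limit $K\to\infty$. Since $\nu<1$ and $1+\gamma>0$, Lemma~\ref{lem:c} gives $\int_0^\infty\int_0^{2\pi}|c|\,d\varphi\,dz\le C|v-v_*|^{1+\gamma}$. Hence, for $\phi\in\mathcal{C}^2_b$:
\begin{itemize}
\item $\mathcal L_N\phi$ is continuous (it tends to $0$ on the diagonals);
\item $|\mathcal L_N\phi(\bv)|\le C_N\|\nabla\phi\|_\infty(1+|\bv|)$;
\item $\mathcal L^K_N\phi\to\mathcal L_N\phi$ locally uniformly, because $\int_0^\infty\int_0^{2\pi}|c-c_K|\,d\varphi\,dz\le C\big(K^{1-1/\nu}|v-v_*|^{1+\gamma}+K^{1+1/\gamma}\big)$.
\end{itemize}
Energy conservation then supplies all the uniform integrability, and your jump-activity bound also follows from energy alone. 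So the Fisher information enters only through \eqref{Fis} itself, and later in the paper through uniqueness.

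One small point in your cutoff step concerns conservation. Identifying a jump of the pair $(i,j)$ with the map $(v_i,v_j)\mapsto(v',v_*')$ of \eqref{Bol1} requires $c(v_*,v,z,\varphi)=-c(v,v_*,z,\varphi)$. That means choosing the frame in \eqref{z} with $I(-x)=-I(x)$ and $J(-x)=-J(x)$. For an arbitrary measurable frame, momentum is not conserved.
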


Once the Fisher information of the law of the particle system is controlled, the proximity of two particles can be controlled as well. This idea originated in \cite{FHM14}.
\begin{lem}\label{cor: -2}
    Under the same conditions as Theorem \ref{kpst}, for $N\ge2$, let $F_t^N$ be the law of a solution $\bV^N_t$ of \eqref{mainsde}, starting from  i.i.d $f_0$-distributed initial random variables $(V_0^i)_{i=1,\dots,N}$, and $F^{N:2}_{t}$ be  the $2$-marginal of $F^{N}_{t}$ for $t\in[0,T]$. Then we have for $t\in[0,T]$,
    \begin{align*}
        \int_{\R^6}|v_1-v_2|^{-2}F_t^{N:2} d\bv\le \frac{2}{N} I_N(F^N_0)=2I_1(f_0).
    \end{align*}
    Equivalently, 
       \begin{align*}
        \E[|V_t^i-V_t^j|^{-2}]\le \frac{2}{N} I_N(F^N_0)=2I_1(f_0), \quad  1\le i\ne j\le N.
    \end{align*}
\end{lem}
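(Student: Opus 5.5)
The bound follows by combining three ingredients: a Hardy-type inequality on $\R^6$, the marginal property \eqref{I-marg}, and the Fisher information monotonicity \eqref{Fis} of Theorem \ref{Fini-Fisher}. Fix $t\in[0,T]$ and write $G:=F_t^{N:2}$, a symmetric probability density on $\R^6$.

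By \eqref{I-marg} with $j=2$ and \eqref{Fis},
\[
I_2(G)\le \frac{2}{N}I_N(F_t^N)\le \frac 2N I_N(F_0^N)=2I_1(f_0),
\]
where the last equality is \eqref{tensor}. It therefore suffices to prove the inequality
\[
\int_{\R^6}|v_1-v_2|^{-2}G\,d\bv\le I_2(G)
\]
for every probability density $G$ on $\R^6$ with finite Fisher information.

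To prove this inequality, change variables to $x=v_1-v_2$ and $y=(v_1+v_2)/2$, so that $\nabla_x=\tfrac12(\nabla_{v_1}-\nabla_{v_2})$. The Hardy inequality in $\R^3$ states that $\int |x|^{-2}h^2\,dx\le 4\int|\nabla_x h|^2\,dx$. Apply it to $h=\sqrt{G}$ for each fixed $y$, then integrate in $y$. Since $|\nabla_x\sqrt G|^2 = |\nabla_x G|^2/(4G)$, this gives
\[
\int |x|^{-2} G \le \int \frac{|\nabla_x G|^2}{G}
=\frac14\int\frac{|\nabla_{v_1}G-\nabla_{v_2}G|^2}{G}
\le \frac12\int\frac{|\nabla_{v_1}G|^2+|\nabla_{v_2}G|^2}{G}=\frac12 I_2(G).
\]
The Jacobian of the change of variables is $1$, so this yields the inequality with the even better constant $\tfrac12$. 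The claimed estimate $\int|v_1-v_2|^{-2}F_t^{N:2}\le \tfrac{2}{N}I_N(F_0^N)=2I_1(f_0)$ follows, and the expectation form is just a rewriting using exchangeability, which gives $\E|V^i_t-V^j_t|^{-2}=\int|v_1-v_2|^{-2}F^{N:2}_t$ for every $i\ne j$.

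The main obstacle is the justification of Hardy's inequality for $\sqrt G$, which is only known to satisfy $\nabla\sqrt G\in L^2$, with $\sqrt G\in L^2$. To handle this, apply Hardy to smooth compactly supported approximations of $\sqrt{G}(\cdot,y)$ and pass to the limit using Fatou's lemma on the left-hand side. For almost every $y$ the slice $\sqrt G(\cdot,y)$ lies in $H^1(\R^3)$ by Fubini, so the approximation works slice by slice. If the law $F_t^N$ is only known as a measure a priori, finiteness of $I_N(F_t^N)$ (by the definition of $I_N$) already forces it to have a density, so the argument applies.
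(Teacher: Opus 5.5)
Your proof is correct and follows the same route as the paper: a unit-Jacobian change of variables isolating $v_1-v_2$, Hardy's inequality applied slicewise to $\sqrt{F^{N:2}_t}$, and then \eqref{I-marg}, \eqref{Fis} and \eqref{tensor}. The differences are minor: the paper uses the coordinates $(v_1-v_2,v_2)$ and bounds the partial gradient by the full one (constant $1$), whereas your center-of-mass coordinates give the sharper constant $\tfrac12$; you also supply the $H^1$ density justification of Hardy that the paper leaves implicit.
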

\begin{proof}
    Without loss of generality, let $(i,j)=(1,2)$. Then we manipulate the changing of variables defined by the following transformation
    $$w_1:=v_1-v_2, v_2:=v_2, \forall\,  v_1, v_2\in\R^3,$$
whose Jacobian  satisfies $| \frac{\partial w_1\partial v_2}{\partial v_1 \partial v_2}|=1$. 
Therefore,
    \begin{align*}
    \E[\abs{V^{1}_s-V^{2}_s}^{-2} ]&= \int_{\R^6} |v_1-v_2|^{-2} F^{N:2}_{s} (v_1,v_2) dv_1 dv_2\\
     &=\int_{\R^6} |w_1|^{-2} F^{N:2}_{s} (w_1+v_2,v_2) dw_1 dv_2.
    \end{align*}
Applying Hardy's inequality in the $w_1$-variable yields
\[ \E[\abs{V^{1}_s-V^{2}_s}^{-2} ]  \le 4 \int_{\R^6} \Big|\nabla_{w_1} \sqrt{F^{N:2}_{s} (w_1+v_2,v_2)}\Big|^2dw_1  dv_2.\]
Since $$\Big|\nabla_{w_1} \sqrt{F^{N:2}_{s} (w_1+v_2,v_2)} \Big|\le \Big|\nabla \sqrt{F^{N:2}_{s} (w_1+v_2,v_2)} \Big|,$$
we obtain 
  \begin{align*}   
  \E[\abs{V^{1}_s-V^{2}_s}^{-2} ]  \le 
   4 \int_{\R^6} \Big|\nabla \sqrt{F^{N:2}_{s} (v_1,v_2)}\Big|^2dv_1 dv_2=I_2(F_s^{N:2}).
 \end{align*}    
By \eqref{I-marg}, \eqref{tensor} and \eqref{Fis},  we have
\[ I_2(F_s^{N:2})\le \frac{2}{N} I_N(F^{N}_s)\le \frac{2}{N} I_N(F_0^N)=2I_1(f_0),\]
which completes the proof.
\end{proof}

Next, we establish the following lemma which plays a crucial role in the proof of pathwise uniqueness. 
\begin{lem}\label{lem: v-vf} 
 Assume \eqref{con} and  \eqref{pa-range}. For $t\in[0,T]$, let $F_t^N$ be the law of a solution $\bV^N_t$ to \eqref{mainsde}, and $F^{N:3}_{t}$ be the $3$-marginal of $F^{N}_{t}$. Assume that the initial distribution $f_0$ has finite Fisher information, i.e. $I_1(f_0)<\infty.$ Then for any $\alpha$ such that $0<\alpha\le -\gamma$,
\begin{align*}
&\E\Bigg[ 
\int_0^\infty\!\!\int_0^{2\pi}
\abs{c(V^{1}_s,V^{3}_s,z,\varphi)}
\,\big(\big|V^{1}_s-V^{2}_s+c(V^{1}_s,V^{3}_s,z,\varphi)\big|^{-1-\alpha}
+\big|V^{1}_s-V^{2}_s\big|^{-1-\alpha}\big)\,dz\,d\varphi\Bigg]\\
&\le C\Bigl(1+I_1(f_0)+m_2(f_0)\Bigr).
\end{align*}

\end{lem}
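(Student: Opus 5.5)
We bound the two terms separately, writing $u:=V^1_s-V^2_s$, $w:=V^1_s-V^3_s$ and $c:=c(V^1_s,V^3_s,z,\varphi)$. Recall from \eqref{num} that $|c|=\sqrt{(1-\cos\theta)/2}\,|w|\le \theta|w|/2$ with $\theta=G(z/|w|^\gamma)$.

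\textbf{The second term.} Here the weight $|u|^{-1-\alpha}$ does not depend on $(z,\varphi)$, so we integrate in $(z,\varphi)$ first. Lemma \ref{lem:c}(i) gives the bound $C|w|^{1+\gamma}$, and it remains to control
\[
\E\big[|V^1_s-V^3_s|^{1+\gamma}\,|V^1_s-V^2_s|^{-1-\alpha}\big].
\]
Since $1+\alpha\le 1-\gamma<2$, the first step is $|u|^{-1-\alpha}\le 1+|u|^{-2}$. We then use $|w|^{1+\gamma}\le 1+|w|^2$ and Hölder's inequality. This requires moments of $|w|^{1+\gamma}|u|^{-2}$, and a bare $|u|^{-2}$ bound with a moment factor is not immediately available from Lemma \ref{cor: -2}, which only provides $\E|u|^{-2}\le 2I_1(f_0)$.

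The plan is to use the 3-marginal and Hardy's inequality in the variable $v_2$, with $v_1,v_3$ frozen. The quantity is
\[
\int |v_1-v_3|^{1+\gamma}|v_1-v_2|^{-2}F^{N:3}\,dv.
\]
Hardy's inequality in $v_2$ bounds it by
\[
4\int |v_1-v_3|^{1+\gamma}\,|\nabla_{v_2}\sqrt{F^{N:3}}|^2\,dv.
\]
This still carries a weight, and the Fisher information of $F^{N:3}$ is not weighted. To remove the weight, split according to whether $|u|\le1$ or $|u|>1$:
\begin{itemize}
\item On $\{|u|>1\}$, the factor $|u|^{-1-\alpha}$ is at most $1$, so this part is bounded by $C\E|w|^{1+\gamma}\le C(1+m_2(f_0))$. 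Here we use that energy is conserved and $\E|V^i_s|^2=m_2(f_0)$ by exchangeability and Theorem \ref{Fini-Fisher}.
\item On $\{|u|\le1\}$, use Hölder with exponents $p=2/(1+\alpha)>1$ and $p'$:
\[
\E\big[|w|^{1+\gamma}|u|^{-1-\alpha}\indiq_{\{|u|\le1\}}\big]\le \big(\E|u|^{-2}\big)^{(1+\alpha)/2}\big(\E|w|^{(1+\gamma)p'}\big)^{1/p'}.
\]
Since $p'=2/(1-\alpha)$ and $\alpha\le-\gamma$, we get $(1+\gamma)p'\le 2$. Lemma \ref{cor: -2} and the second moment then bound this by $C(1+I_1(f_0)+m_2(f_0))$.
\end{itemize}
This settles the second term without needing the weighted Hardy inequality.

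\textbf{The first term (main obstacle).} The singularity now sits at the post-collisional point $u+c$, which depends on the integration variables. The plan is a change of variables in the law. For fixed $(v_3,z,\varphi)$, consider the map
\[
v_1\mapsto v_1+c(v_1,v_3,z,\varphi)=v'(v_1,v_3,\theta,\varphi).
\]
For $\theta\in(0,\pi/2)$ and fixed $v_3,\theta$, this map is essentially $v_1-v_3\mapsto \frac{1+\cos\theta}{2}(v_1-v_3)+\frac{\sin\theta}{2}\Gamma(v_1-v_3,\varphi)$. The measurability problems of $\Gamma$ are avoided by working with the $\sigma$-representation \eqref{operator}. In the $\sigma$-form, for fixed $\sigma$ the map $v_1\mapsto v'$ has Jacobian bounded below uniformly for $\theta\le\pi/2$; this is the classical cancellation-lemma change of variables, with Jacobian comparable to $\cos^2(\theta/2)\ge 1/2$.

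We then proceed in four steps:
\begin{enumerate}
\item Rewrite the $(z,\varphi)$-integral as $\int_{\Sp^2}|w|^\gamma b(\cos\theta)\,d\sigma$.
\item Bound $|c|\le C\theta|w|$, so the integrand is $|w|^{1+\gamma}\theta\, b(\cos\theta)$ times $|v'-v_2|^{-1-\alpha}$.
\item Split the $\theta$-integral, noting that $\theta^{1}b\sim\theta^{-1-\nu}$ times $\sin\theta$, which is integrable since $\nu<1$.
\item For fixed $\sigma$, change variables $v_1\to v'$ (with $v_3$ fixed, $|v'-v_3|\simeq|v_1-v_3|$). This transfers the expectation to $\int |v'-v_3|^{1+\gamma}|v'-v_2|^{-1-\alpha}\tilde F(v',v_2,v_3)\,dv$ with a pushed-forward density $\tilde F$.
\end{enumerate}
Controlling $\int|v'-v_2|^{-2}\tilde F$ needs a bound on the Fisher information, or directly a Hardy-type bound, of the pushed-forward density. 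This is the delicate point. The fallback is Hardy in $v_2$ alone, which is untouched by the change of variables: Hardy gives $\int|v_2-x|^{-2}g(v_2)\,dv_2\le 4\int|\nabla_{v_2}\sqrt g|^2$ for any point $x$, in particular $x=v'$. So the plan is to apply Hardy in $v_2$ before changing variables, with $v_1,v_3,\sigma$ frozen. The singular point $v_1+c$ is then just a parameter, and we get
\[
\int|v_1+c-v_2|^{-2}F^{N:3}\,dv_2\le 4\int|\nabla_{v_2}\sqrt{F^{N:3}}|^2\,dv_2,
\]
uniformly in $(z,\varphi)$.

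The remaining steps repeat the second-term argument. Split on $\{|u+c|\le1\}$ and use Hölder in the joint measure $F^{N:3}\,dv\otimes|c|\,dz\,d\varphi$, whose total mass is finite by Lemma \ref{lem:c} and the moments. The $|u+c|^{-2}$ piece is then controlled by $I_3(F^{N:3})\le 3I_1(f_0)$ times $\sup_{v_1,v_3}\int|c|\,dz\,d\varphi/(1+|w|^2)$. The weight $|c|$ has to be split between the Hölder factors; this exponent bookkeeping is where I expect to struggle, and the conditions $\alpha\le-\gamma$ and $\gamma>-1$ should make it close.
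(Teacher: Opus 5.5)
Your second-term argument is correct: H\"older with $p=2/(1+\alpha)$ and $(1+\gamma)p'\le 2$ is equivalent to the pointwise Young inequality the paper uses. Your slice-wise Hardy inequality in $v_2$ is also valid: you freeze $(v_1,v_3)$ and the collision parameters, so the singular point $v_1+c$ is just a parameter. It is a more direct substitute for the paper's substitution $w_2=v_2-v_1'$ followed by the Cauchy--Schwarz (convexity) bound on the Fisher information of $g$.

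\textbf{The gap is in how you combine these for the first term.} You propose H\"older with respect to $F^{N:3}_s\,dv\otimes|c|\,dz\,d\varphi$. After slice-wise Hardy, the singular factor becomes
\[
4\int_{\R^9}\big|\nabla_{v_2}\sqrt{F^{N:3}_s}\big|^2\Big(\int_0^\infty\!\!\int_0^{2\pi}|c(v_1,v_3,z,\varphi)|\,dz\,d\varphi\Big)dv\simeq\int_{\R^9}\big|\nabla_{v_2}\sqrt{F^{N:3}_s}\big|^2\,|v_1-v_3|^{1+\gamma}\,dv .
\]
This is a Fisher information with a growing weight, and it is not bounded by $I_3(F^{N:3}_s)$. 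Your claimed bound, $I_3$ times $\sup_{v_1,v_3}\int\!\!\int|c|\,dz\,d\varphi/(1+|w|^2)$, would need control of $\int|\nabla_{v_2}\sqrt{F^{N:3}_s}|^2(1+|v_1-v_3|^2)\,dv$, which you do not have. Redistributing powers of $|c|$ against $dz\,d\varphi$ does not help either. Indeed $\int\!\!\int|c|^r\,dz\,d\varphi\simeq|w|^{r+\gamma}$ is finite only for $r>\nu$, and then $r+\gamma>\gamma+\nu>0$. So the Fisher factor always carries a positive power of $|w|$. The problem is not the exponent bookkeeping but the reference measure.

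\textbf{The fix.} You must apply H\"older or Young against a $(z,\varphi)$-measure whose mass does not depend on $w$, and put the entire factor $|w|^{1+\gamma}$ in the non-singular part. Your own steps 1--3 already provide this. Rescaling $z\mapsto z|w|^{\gamma}$ (equivalently, passing to $\sigma$) gives
\[
\int_0^\infty\!\!\int_0^{2\pi}|c|\,|v_1+c-v_2|^{-1-\alpha}\,dz\,d\varphi\le|v_1-v_3|^{1+\gamma}\int_{\Sp^2}\beta(\theta)\,|v_1'-v_2|^{-1-\alpha}\,d\sigma ,
\]
with $\int_{\Sp^2}\beta(\theta)\,d\sigma<\infty$ because $\nu<1$. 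The pointwise Young inequality
\[
|v_1-v_3|^{1+\gamma}|v_1'-v_2|^{-1-\alpha}\le C\big(|v_1-v_3|^2+|v_1'-v_2|^{-2}+1\big)
\]
holds because $(1+\gamma)+(1+\alpha)\le 2$, and it separates the two factors. Your slice-wise Hardy inequality, with $(v_1,v_3,\sigma)$ frozen, then gives $\int|v_1'-v_2|^{-2}F^{N:3}_s\,dv\le I_3(F^{N:3}_s)\le 3I_1(f_0)$ uniformly in $\sigma$. Integrating against $\beta\,d\sigma$ closes the estimate. This is the paper's argument, with your Hardy step replacing its $w_2$-substitution. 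The change of variables $v_1\mapsto v'$ that you abandoned is not needed.
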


\begin{proof}
Recalling from \eqref{atc} that
\[
|c(v_1,v_3,z,\varphi)|
=|v_1-v_3|\sqrt{\frac{1-\cos G(z/|v_1-v_3|^\gamma)}{2}}
\le |v_1-v_3|\sin G(z/|v_1-v_3|^\gamma),
\]
and  from\eqref{z} that  $v_1'=v_1+c(v_1,v_3,z,\varphi)$ with 
\[
v_1'=\frac{v_1+v_3}{2}+\frac{|v_1-v_3|}{2}\sigma,
\qquad
v_3'=\frac{v_1+v_3}{2}-\frac{|v_1-v_3|}{2}\sigma,
\qquad \sigma\in \Sp^2,
\]
we obtain
\begin{align*}
&\int_0^\infty\!\!\int_0^{2\pi}
|c(v_1,v_3,z,\varphi)|\,(|v_1-v_2+c(v_1,v_3,z,\varphi)|^{-1-\alpha}+|v_1-v_2|^{-1-\alpha})\,dz\,d\varphi \\
&\le |v_1-v_3|^{1+\gamma}\int_0^\infty\!\!\int_0^{2\pi}
\sin G(z)\,(|v_1'-v_2|^{-1-\alpha}+|v_1-v_2|^{-1-\alpha})\,dz\,d\varphi.
\end{align*}
Making the change of variables $z=H(\theta)$,  equivalently, $\theta=G(z)$, transforms the above last expression into
\begin{align*}
& |v_1-v_3|^{1+\gamma}\int_0^{\pi/2}\!\!\int_0^{2\pi}
\beta(\theta)\,(|v_1'-v_2|^{-1-\alpha}+|v_1-v_2|^{-1-\alpha})\,\sin\theta\,d\theta\,d\varphi \\
&= |v_1-v_3|^{1+\gamma}\int_{\Sp^2}\beta(\theta)\,(|v_1'-v_2|^{-1-\alpha}+|v_1-v_2|^{-1-\alpha})\,d\sigma.
\end{align*}
Since  $0<(1+\gamma)+(1+\alpha)\le 2$, Young's inequality implies that
\[|v_1-v_3|^{1+\gamma} (|v_1'-v_2|^{-1-\alpha}+|v_1-v_2|^{-1-\alpha})\le C(|v_1'-v_2|^{-2}+|v_1-v_2|^{-2}+|v_1-v_3|^{2}+1).\]
Therefore, 
\begin{align}
&\E\Bigg[
\int_0^\infty\!\!\int_0^{2\pi}
|c(V^{1}_s,V^{3}_s,z,\varphi)|
\,\big|V^{1}_s-V^{2}_s+c(V^{1}_s,V^{3}_s,z,\varphi)\big|^{-1-\alpha}\,dz\,d\varphi\Bigg]\nonumber\\
&\le C\int_{\R^9}\int_{\Sp^2}
\beta(\theta)\,(|v_1'-v_2|^{-2}+|v_1-v_3|^2+|v_1-v_2|^{-2}+1)\,F^{N:3}_{s}(v_1,v_2,v_3)\,d\sigma \,dv_1\,dv_2\,dv_3.\nonumber
\end{align}
By the conservation of energy and Lemma \ref{cor: -2}, we have
\begin{align}\label{ineq: con r9}
    \int_{\R^9}(|v_1-v_3|^2+|v_1-v_2|^{-2}+1) F^{N:3}_{s}(v_1,v_2,v_3)\,dv_1\,dv_2\,dv_3\le 4 m_2(f_0)+2I_1(f_0)+1. 
\end{align}
It therefore remains to estimate the term involving $|v_1'-v_2|^{-2}$. To this end, we perform the change of variables $w_2:=v_2-v_1'$, which yields 
\begin{align*}
&\int_{\R^9}\int_{\Sp^2}
\beta(\theta)\,|v_1'-v_2|^{-2}\,F^{N:3}_{s}(v_1,v_2,v_3)\,d\sigma\; dv_1\,dv_2\,dv_3\\
=&
\int_{\R^9}\int_{\Sp^2}
\beta(\theta)\,|w_2|^{-2}\,F^{N:3}_{s}(v_1,w_2+v_1',v_3)\,d\sigma\; dv_1\,dw_2\,dv_3.
\end{align*}
Define \[g(w_2) := \int_{\R^6}\int_{\Sp^2} \beta(\theta)\, F_s^{N:3}(v_1,w_2+v_1',v_3) \,d\sigma\,dv_1\,dv_3.\]
Then
\[\int_{\R^9}\int_{\Sp^2}
\beta(\theta)\,|w_2|^{-2}\,F^{N:3}_{s}(v_1,w_2+v_1',v_3)\,d\sigma\; dv_1\,dw_2\,dv_3=\int_{\R^3}|w_2|^{-2}g(w_2) dw_2.\]
Applying Hardy's inequality  in the $w_2$-variable , it yields
\begin{align*}
\int_{\R^3}|w_2|^{-2}g(w_2) dw_2 \le 4 \int_{\R^3}\Big|\nabla_{w_2}\sqrt{g(w_2)}\Big|^2 dw_2.
\end{align*}
Using the identity $\nabla \sqrt{g}=\frac{\nabla g}{2\sqrt{g}}$,
together with the Cauchy-Schwarz inequality, we obtain
\begin{align*}
4\int_{\R^3}|\nabla_{w_2}\sqrt{g(w_2)}|^2\,dw_2
&=
\int_{\R^3}
\frac{|\nabla_{w_2}g(w_2)|^2}{g(w_2)}
\,dw_2\\
&=
\int_{\R^3}
\frac{\Big(
\int_{\R^6}\int_{\Sp^2}
\beta(\theta)
\,\big|
\nabla_{w_2}
F_s^{N:3}(v_1,w_2+v_1',v_3)
\big|
\,d\sigma\,dv_1\,dv_3
\Big)^2}
{g(w_2)}
\,dw_2\\
&\le
C
\int_{\R^3}
\int_{\R^6}\int_{\Sp^2}
\beta(\theta)
\frac{
\big|
\nabla_{w_2}
F_s^{N:3}(v_1,w_2+v_1',v_3)
\big|^2
}
{
F_s^{N:3}(v_1,w_2+v_1',v_3)
}
\,d\sigma\,dv_1\,dv_3\,dw_2.
\end{align*}
Returning to the original variable $v_2=w_2+v_1'$,  and recalling that $\int_{\Sp^2}\beta(\theta)\,d\sigma<\infty$ for $0<\nu<1$, we deduce  that the last term is bounded by 
\begin{align*}
C\int_{\R^9}\frac{|\nabla_{v_2}F^{N:3}_{s}(v_1,v_2,v_3)|^2}{F^{N:3}_{s}(v_1,v_2,v_3)}\,dv_1\,dv_2\,dv_3
\le  C\,I_3(F^{N:3}_{s}).
\end{align*}
 Moreover, by \eqref{I-marg}, \eqref{Fis} and \eqref{tensor}, we have
$$I_3(F^{N:3}_s)\le \frac{3}{N}I_N(F^N_{s})\le \frac{3}{N}I_N(F^N_0)=3I_1(f_0).$$ 
Therefore,
\[
\int_{\R^9}\int_{\Sp^2}
\beta(\theta)\,|v_1'-v_2|^{-2}\,F^{N:3}_{s}(v_1,v_2,v_3)\,d\sigma\, dv_1\,dv_2\,dv_3
\le CI_1(f_0),
\]
which together with \eqref{ineq: con r9} yields the result.

\end{proof}

\begin{lem}\label{lem: -alpha}
Assume \eqref{con} and \eqref{pa-range}.  For \,$t\in[0,T]$, let $F_t^N$ be the law of a solution $\bV^N_t$ to \eqref{mainsde} starting from an i.i.d. family of  $f_0$-distributed random variables $(V_0^1,\dots,V_0^N)$. Assume moreover that the initial distribution $f_0\in\ca{P}_2(\R^3)$ has finite Fisher information, i.e. $I_1(f_0)<\infty$.  Then for any $\alpha\in(0, -\gamma]$, we have
    \begin{align*}
        \E\Big[\sup_{0\le t \le T}\abs{V_t^{i}-V_t^{j}}^{-\alpha}\Big]\le C(1+T) (I_1(f_0)+m_2(f_0)+1),
    \end{align*}
    where $C$ depends on $(\gamma,\nu,\alpha)$.
\end{lem}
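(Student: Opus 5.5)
We plan to apply It\^o's formula for jump processes to the function $\psi(x)=|x|^{-\alpha}$, evaluated along $X_t:=V^{i}_t-V^{j}_t$ (take $(i,j)=(1,2)$). Then we use the elementary bound $\big||x+h|^{-\alpha}-|x|^{-\alpha}\big|\le C|h|\big(|x+h|^{-1-\alpha}+|x|^{-1-\alpha}\big)$, and control the resulting integrals with Lemma \ref{lem: v-vf}.

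\textbf{Step 1: the jumps of $X$.} By \eqref{mainsde}, $X$ jumps only when one of $O_{1k}$ ($k\neq 1$) or $O_{2k}$ ($k\ne 2$) fires. There are three cases:
\begin{itemize}
\item[(a)] $O_{1k}$ with $k\ge3$: the increment is $c(V^1_{s-},V^k_{s-},z,\varphi)$.
\item[(b)] $O_{2k}$ with $k\ge3$: the increment is $-c(V^2_{s-},V^k_{s-},z,\varphi)$.
\item[(c)] $O_{12}$: the increment is $c(V^1,V^2)-c(V^2,V^1)$, i.e. $v_1'-v_2'-(v_1-v_2)$. Its new modulus is $|v_1'-v_2'|=|v_1-v_2|$ by \eqref{Bol1}, so $\psi(X)$ does not change at these jumps.
\end{itemize}
Hence, with $\psi$ smooth away from $0$ and $X_s\ne0$ a.s. (by Lemma \ref{cor: -2}), we get
\[
\sup_{t\le T}|X_t|^{-\alpha}\le |X_0|^{-\alpha}+\sum_{k\ge3}\int_0^T\!\!\int\!\!\int \big|\psi(X_{s-}+c(V^1_{s-},V^k_{s-},z,\varphi))-\psi(X_{s-})\big|\,O_{1k}(ds,dz,d\varphi)+(\text{same for } O_{2k}).
\]
We bound the supremum by the total variation of the jump sum. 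This avoids any compensator or martingale issue: the integrands are nonnegative, so taking expectations replaces $O_{1k}$ by $N^{-1}ds\,dz\,d\varphi$.

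\textbf{Step 2: expectation.} The initial term satisfies $\E|V_0^1-V_0^2|^{-\alpha}\le 1+\E|V_0^1-V_0^2|^{-2}\le 1+2I_1(f_0)$ by Lemma \ref{cor: -2}. Using the elementary bound above, each $k\ge3$ contributes
\[
N^{-1}\int_0^T\E\Big[\int_0^\infty\!\!\int_0^{2\pi}|c(V^1_s,V^k_s,z,\varphi)|\big(|V^1_s-V^2_s+c|^{-1-\alpha}+|V^1_s-V^2_s|^{-1-\alpha}\big)dz\,d\varphi\Big]ds .
\]
By exchangeability of the solution given by Theorem \ref{Fini-Fisher}, this equals the quantity in Lemma \ref{lem: v-vf} with $k=3$. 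That lemma is valid for $0<\alpha\le-\gamma$ and gives the bound $C(1+I_1(f_0)+m_2(f_0))$ uniformly in $s$.

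Summing over the at most $2N$ indices $k$ cancels the factor $N^{-1}$. The $O_{2k}$ terms are handled identically: write $-c(V^2,V^k)$ and use $|X+h|=|V^2-V^1-(-h)|$, so the situation is symmetric after swapping the roles of $1$ and $2$. This yields $C(1+T)(1+I_1(f_0)+m_2(f_0))$.

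\textbf{Main obstacle.} The main difficulty is justifying It\^o's formula for the singular function $\psi$ with infinite-activity jumps. To handle it, we will use a smooth truncation $\psi_\epsilon(x)=(|x|^2+\epsilon)^{-\alpha/2}$. This function satisfies the same elementary increment bound uniformly in $\epsilon$ (its gradient is bounded by $C|x|^{-1-\alpha}$ along segments, controlled by the endpoints via the displayed inequality). We then pass to the limit $\epsilon\to0$ with Fatou's lemma.

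For the jump sum to be well defined, finiteness of $\int|c|\,dz$ (Lemma \ref{lem:c}) suffices. We should also double-check that the elementary inequality holds when the segment $[x,x+h]$ passes near $0$. It does: if $|h|\ge|x|/2$, the left side is at most $|x|^{-\alpha}+|x+h|^{-\alpha}\le C|h|(|x|^{-1-\alpha}+|x+h|^{-1-\alpha})$; otherwise the segment stays away from $0$ and the mean value theorem applies.
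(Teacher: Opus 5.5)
Your proposal is correct and follows essentially the same route as the paper. Both arguments discard the $O_{12}$ jumps because they preserve $|V^1-V^2|$, bound the supremum by the initial value plus the total variation of the remaining jump integrals using the mean-value estimate for $r\mapsto r^{-\alpha}$, and then control the initial term with Lemma \ref{cor: -2} and, after exchangeability, the jump terms with Lemma \ref{lem: v-vf}. Your regularization by $(|x|^2+\epsilon)^{-\alpha/2}$ with a monotone limit supplies a justification of It\^o's formula for the singular function that the paper leaves implicit.
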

\begin{proof}
    By exchangeability, it suffices to consider  $(i,j)=(1,2)$. Recall \eqref{atc} and set $c^{i,j}_t := c(V_t^i, V_t^j,z,\varphi)$. We first observe that
    $$|V^{1}_t-V^{2}_t+2c_t^{1,2}|^2=|V^{1}_t-V^{2}_t|^2+4c_t^{1,2}\cdot(V^{1}_t-V^{2}_t)+4|c_t^{1,2}|^2= |V^{1}_t-V^{2}_t|^2,$$  which shows that the jump terms driven by the Poisson measure $O_{12}$ do not contribute to the evolution of $|V_t^1-V_t^2|$. Consequently, only collisions involving particle $1$ or particle $2$ with a third particle contribute, and we obtain
\begin{align*}
    \abs{V_t^{1}-V_t^{2}}^{-\alpha}=&\abs{V_0^{1}-V_0^{2}}^{-\alpha}\\
    &+ \sum_{j\ge 3}^N  \intot\int_0^\infty\int_0^{2\pi} 
 \Big(|V^{1}_\sm-V^{2}_\sm+c_s^{1,j}|^{-\alpha}
- |V^{1}_\sm-V^{2}_\sm|^{-\alpha}\Big) O_{1j}(ds,dz,d\varphi)
\\
&+ \sum_{i\ge 3}^N  \intot\int_0^\infty\int_0^{2\pi} 
\Big(|V^{1}_\sm-V^{2}_\sm-c_s^{2,i}|^{-\alpha} 
- |V^{1}_\sm-V^{2}_\sm|^{-\alpha}\Big) O_{2i}(ds,dz,d\varphi).
\end{align*}
For $x,y>0$,  the mean value theorem applied to the function $r\mapsto r^{-\alpha}$ yields
\[
x^{-\alpha}-y^{-\alpha}
\le
\alpha\bigl(x^{-1-\alpha}+y^{-1-\alpha}\bigr)|x-y|.
\]
Using this estimate and the symmetry between the two sums, we deduce that
\begin{align*}
&\E\Big[\sup_{0\le t \le T}\abs{V_t^{1}-V_t^{2}}^{-\alpha}\Big]\\
&\le \E[\abs{V_0^{1}-V_0^{2}}^{-\alpha}]+ C\int_0^T\int_0^\infty\int_0^{2\pi}
\E\Big[\abs{c^{1,3}_s}
\Big(|V^{1}_s -V^{2}_s+c^{1,3}_s|^{-1-\alpha} 
+|V^{1}_s-V^{2}_s|^{-1-\alpha}\Big)\Big] dsdzd\varphi.
\end{align*}
Moreover, by Lemma \ref{cor: -2},  we have 
\begin{align*}
    \E[\abs{V_0^{1}-V_0^{2}}^{-\alpha}]\le C(1+\E[\abs{V_0^{1}-V_0^{2}}^{-2}])\le C(1+I_1(f_0)).
\end{align*}
Finally, applying Lemma \ref{lem: v-vf}, we conclude the desired estimate.
\end{proof}

\subsection{Proof of Theorem \ref{kpst}}
Let $\hat{\bV}=(\hat V^i_t)_{i=1,\dots,N,\ t\ge 0}$  be another weak solution to \eqref{mainsde} with the same initial condition as $\bV$.  By enlarging the probability space if necessary,  we may assume that $\hat{\bV}$ is driven by the same family of Poisson random measures $(O_{ij})_{1\le i,j\le N}$, namely, 
\begin{align}\label{vhat}
\hat V^{i}_t
&=V^i_0+\sum_{j=1}^N\int_0^t\int_0^\infty\int_0^{2\pi}
c(\hat V^{i}_{s-},\hat V^{j}_{s-},z,\varphi)\, O_{ij}(ds,dz,d\varphi),
\qquad i=1,\dots,N.
\end{align}
Also, we may rewrite the equation satisfied by  $\bV$  in law as 
\begin{align}\label{vhat'}
 V^{i}_t
&=V^i_0+\sum_{j=1}^N\int_0^t\int_0^\infty\int_0^{2\pi}
c( V^{i}_{s-}, V^{j}_{s-},z,\varphi+\varphi_{s,i,j})\,
O_{ij}(ds,dz,d\varphi),
\qquad i=1,\dots,N,
\end{align}
where $\varphi_{s,i,j}=\varphi_0(\hat V^{i}_{s-}-\hat V^{j}_{s-}, V^{i}_{s-}-V^{j}_{s-})$. 
Fix $A\ge 1$  and define  the stopping times
\[
\tau_{N,A}:=\inf\Bigl\{t\ge 0:\ \exists\, i\neq j\ \text{such that}\ |V^i_t-V^j_t|^{-1}\ge A\Bigr\},
\]
and
\[
\hat\tau_{N,A}:=\inf\Bigl\{t\ge 0:\ \exists\,i\neq j \ \text{such that}\ |\hat V^i_t-\hat V^j_t|^{-1}\ge A\Bigr\}.
\]
By It\^o's formula, for $t\le \tau_{N,A}\land \hat\tau_{N,A}$,
\begin{align*}
  |\hat V^1_t-V^1_t|^2 = \sum_{j=2}^N\int_0^t \int_0^\infty \int_0^{2\pi}\Big(|\hat V^1_{s-}-V^1_{s-}+c( \hat V^{1}_{s-},  \hat V^{j}_{s-},z,\varphi)-c(  V^{1}_{s-},  V^{j}_{s-},z,\varphi+\varphi_{s,i,j})|^2\\  -|\hat V^1_{s-}-V^1_{s-}|^2 \Big)\, O_{1j}(ds,dz,d\varphi).
\end{align*}
It follows from  remark~\ref{inequ:c} (with $v=\hat V^1_{s-}$, $v_*= \hat V^{j}_{s-}$, $\tv=V^{1}_{s-}$, $\tv_*=V^{j}_{s-}$) that 
\begin{align*}
& \int_0^\infty \int_0^{2\pi}
  \Bigl(2(\hat V^1_{s-}-V^1_{s-})\cdot\bigl[c( \hat V^{1}_{s-},  \hat V^{j}_{s-},z,\varphi)-c(  V^{1}_{s-},  V^{j}_{s-},z,\varphi+\varphi_{s,i,j})\bigr]\Bigr)\,dz\,d\varphi\\
 & \le
2|\hat V^1_{s-}-V^1_{s-}|\Big|\int_0^\infty \int_0^{2\pi}\bigl[c( \hat V^{1}_{s-},  \hat V^{j}_{s-},z,\varphi)-c(  V^{1}_{s-},  V^{j}_{s-},z,\varphi+\varphi_{s,i,j})\bigr]\,dz\,d\varphi\Big|\\
 &\le C(|\hat V^1_{s-}-V^1_{s-}|^2+|\hat V^{j}_{s-}- V^{j}_{s-}|^2)\min{\{|\hat V^1_{s-}-\hat V^{j}_{s-}|^\gamma, |V^{1}_{s-}-V^{j}_{s-}|^\gamma\}},
\end{align*}
and that 
\begin{align*}
&\int_0^\infty \int_0^{2\pi}
   \bigl|c( \hat V^{1}_{s-},  \hat V^{j}_{s-},z,\varphi)-c(  V^{1}_{s-},  V^{j}_{s-},z,\varphi+\varphi_{s,i,j})\bigr|^2 \,dz\,d\varphi \\
   &\le C(|\hat V^1_{s-}-V^1_{s-}|^2+|\hat V^{j}_{s-}- V^{j}_{s-}|^2)\min{\{|\hat V^1_{s-}-\hat V^{j}_{s-}|^\gamma, |V^{1}_{s-}-V^{j}_{s-}|^\gamma\}}.
\end{align*}
Since $s\le \tau_{N,A}\land \hat\tau_{N,A}$, namely,  $| V^{1}_{s-}- V^{j}_{s-}|^{-1}\le A$,   we have
\begin{align*}
 &\int_0^\infty \int_0^{2\pi}
  \Bigl(2(\hat V^1_{s-}-V^1_{s-})\cdot\bigl[c( \hat V^{1}_{s-},  \hat V^{j}_{s-},z,\varphi)-c(  V^{1}_{s-},  V^{j}_{s-},z,\varphi+\varphi_{s,i,j})\bigr]\\
  &\qquad\qquad\quad +\ \bigl|c( \hat V^{1}_{s-},  \hat V^{j}_{s-},z,\varphi)-c(  V^{1}_{s-},  V^{j}_{s-},z,\varphi+\varphi_{s,i,j})\bigr|^2\Bigr)\,dz\,d\varphi\\
  &\le C_A \bigl(|\hat V^{1}_{s-}- V^{1}_{s-}|^2+|\hat V^{j}_{s-}- V^{j}_{s-}|^2\bigr).
\end{align*}
 Using exchangeability, we deduce that 
\begin{align*}
\E\bigl[|\hat V^1_{t\wedge \tau_{N,A}\wedge \hat\tau_{N,A}}- V^1_{t\wedge \tau_{N,A}\wedge \hat\tau_{N,A}}|^2\bigr]
\le C_A\int_0^t\E\bigl[|\hat V^{1}_{s\wedge \tau_{N,A}\wedge \hat\tau_{N,A}}- V^{1}_{s\wedge \tau_{N,A}\wedge \hat\tau_{N,A}}|^2\bigr] \,ds.
\end{align*}
Applying Gr\"onwall's lemma yields, for every $A>0$,
\begin{align*}
    \E\bigl[|\hat V^1_{t\wedge \tau_{N,A}\wedge \hat\tau_{N,A}}- V^1_{t\wedge \tau_{N,A}\wedge \hat\tau_{N,A}}|^2\bigr]=0,
\end{align*}
and consequently
\[\hat V^1_{t\wedge \tau_{N,A}\wedge \hat\tau_{N,A}} =V^1_{t\wedge \tau_{N,A}\wedge \hat\tau_{N,A}}, \quad\text{a.s.}.\]
By exchangeability, the same conclusion holds for every particle, so that 
\[
(\bV_{t\wedge \tau_{N,A}\wedge \hat\tau_{N,A}})_{0\le t\le T}=\ (\hat{\bV}_{t\wedge \hat\tau_{N,A}\wedge \tau_{N,A}})_{0\le t\le T}, \quad\text{a.s.}.
\]
Furthermore,  for every $T>0$, we have 
\[
\lim_{A\to\infty}\bbP(\tau_{N,A}\le T)=0.
\]
Indeed, by  exchangeability and the union bound,  for $-1<\gamma<0$ and $\nu\in(0,1)$,
\begin{align*}
\bbP(\tau_{N,A}\le T)
\le N^2\,\bbP\Bigl(\sup_{0\le t\le T}|V^1_t-V^2_t|^{-1}\ge A\Bigr)
\le N^2\frac{\E[\sup_{0\le t\le T}|V^1_t-V^2_t|^{-\alpha}]}{A^\alpha}, 
\end{align*} 
where $0<\alpha\le-\gamma$. By  Lemma~\ref{lem: -alpha}, the right hand side converges to zero as $A\to\infty$. 
The same argument applies to $\hat\tau_{N,A}$, yielding
\[ \tau_{N,A}\wedge\hat\tau_{N,A} \longrightarrow\infty \qquad\text{in probability},\] as $A\to\infty$,
which immediately implies, by letting $A\to\infty$, that 
\[
(\bV_t)_{0\le t\le T}=\ (\hat{\bV}_t)_{0\le t\le T}, \quad\text{a.s.},
\]
which completes the proof of pathwise uniqueness.

Having established well-posedness of the Kac particle system, we next derive
uniform moment estimates. Together with the conservation of energy and the Fisher information bounds established above, these estimates will be needed later in the proof
of quantitative propagation of chaos. The following lemma shows that any finite moment of order $q>4$ is propagated uniformly on compact time intervals.

\begin{lem}[Propagation of moments]\label{Part-mom}
Under the same assumptions as in Theorem \ref{kpst}, let
$(\bV^N_t)_{t\ge 0}$ be the solution to \eqref{mainsde} starting from an i.i.d. family of  $f_0$-distributed random variables $(V_0^1,\dots,V_0^N)$. Assume further that
$m_q(f_0)<+\infty$ for some $q>2$. Then, for every $1\le i\le N$,
\begin{align*}
    \sup_{0\le t\le T}\E[|V^i_t|^q]\le C_{T,f_0},
\end{align*}
where $C_{T,f_0}>0$.
\end{lem}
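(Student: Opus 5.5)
We apply Itô's formula to $|V^1_t|^q$, using exchangeability, and close the estimate with a Povzner-type inequality for the pair $(V^1,V^j)$ together with Gronwall's lemma. By \eqref{eq:  wmaster} applied to $\phi(\bv)=|v_1|^q$ (after a standard truncation, since $\phi\notin\mathcal C_b^2$), and by exchangeability, we get
\[
\frac{d}{dt}\E[|V^1_t|^q]=\frac{N-1}{N}\,\E\Big[\int_0^\infty\!\!\int_0^{2\pi}\big(|V^1_t+c(V^1_t,V^2_t,z,\varphi)|^q-|V^1_t|^q\big)\,d\varphi\,dz\Big].
\]
We write $v=V^1_t$, $v_*=V^2_t$, $c=c(v,v_*,z,\varphi)$, and expand $|v+c|^q-|v|^q$ to second order in $c$:
\[
|v+c|^q-|v|^q=q|v|^{q-2}\,v\cdot c+R,\qquad |R|\le C_q\,|c|^2\big(|v|^{q-2}+|c|^{q-2}\big).
\]

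For the first-order term we integrate in $\varphi$ first. As in Step~1 of Lemma~\ref{lem: 3ccut}, the integral $\int_0^{2\pi}\Gamma\,d\varphi$ vanishes, so
\[
\int_0^\infty\!\!\int_0^{2\pi} c\,d\varphi\,dz=-\pi\kappa\,(v-v_*)|v-v_*|^\gamma,\qquad \kappa:=\int_0^\infty(1-\cos G(z))\,dz<\infty .
\]
The drift is therefore $-q\pi\kappa|v|^{q-2}\,v\cdot(v-v_*)\,|v-v_*|^\gamma$. We bound it crudely by $C|v|^{q-1}|v-v_*|^{1+\gamma}$, which is at most $C|v|^{q-1}(1+|v|+|v_*|)$ because $1+\gamma\in(0,1)$. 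For the second-order term, Lemma~\ref{lem:c}(i) gives $\int\!\!\int|c|^2\le C|v-v_*|^{2+\gamma}$. Since \eqref{num} yields $|c|\le|v-v_*|$, we have $|c|^q\le|c|^2|v-v_*|^{q-2}$, so
\[
\int\!\!\int |R|\le C|v-v_*|^{2+\gamma}\big(|v|^{q-2}+|v-v_*|^{q-2}\big)\le C\,|v-v_*|^{\gamma}\big(1+|v|^q+|v_*|^q\big).
\]

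The singular factor $|v-v_*|^\gamma$ is the only delicate point, and it is the one step we expect to be the main obstacle. We split according to whether $|v-v_*|\ge 1$ or $|v-v_*|<1$. On $\{|v-v_*|\ge1\}$ the factor is at most $1$. On $\{|v-v_*|<1\}$ we have $|v_*|\le|v|+1$, so the whole bracket is at most $C(1+|v|^q)$, and the contribution is $C|v-v_*|^\gamma(1+|v|^q)$. Lemma~\ref{cor: -2} cannot be used directly against $|v|^q$ here, since that would require a higher moment. Instead we use that $|v-v_*|<1$ forces $|v-v_*|^{2+\gamma}\le|v-v_*|^{\gamma}$ together with the finer Povzner structure. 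For such nearby pairs, $|v+c|^2\le|v|^2+|v_*|^2$ implies $|v+c|\le\sqrt2|v|+1$, so $\big||v+c|^q-|v|^q\big|\le C(1+|v|^{q-1})|c|$. Lemma~\ref{lem:c}(i) then gives the bound $C(1+|v|^{q-1})|v-v_*|^{1+\gamma}\le C(1+|v|^q)$, with no singularity left because $1+\gamma>0$. In other words, on $\{|v-v_*|<1\}$ we use the first-order bound $\int\!\!\int|c|\le C|v-v_*|^{1+\gamma}$ rather than the second-order expansion.

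Collecting all terms and using exchangeability to identify $\E[|V^2_t|^q]$ with $\E[|V^1_t|^q]$, we obtain
\[
\frac{d}{dt}\E[|V^1_t|^q]\le C\big(1+\E[|V^1_t|^q]\big).
\]
Gronwall's lemma on $[0,T]$ then gives $\sup_{t\le T}\E[|V^1_t|^q]\le(m_q(f_0)+CT)e^{CT}$. To justify the formal differentiation, we apply everything to $\phi_R(v)=(|v|^2\wedge R)^{q/2}$ with a smooth cutoff, or alternatively localize by stopping at the first time some $|V^i_t|$ exceeds $R$. The constants above are uniform in $R$, so letting $R\to\infty$ by Fatou's lemma concludes the argument.
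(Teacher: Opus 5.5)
Your proof is correct and follows essentially the paper's route: Itô's formula with exchangeability, a second-order Taylor expansion of $|v+c|^q$, the bounds of Lemma~\ref{lem:c}-(i), Young's inequality and Gr\"onwall. The one difference is that the singularity you single out as the main obstacle does not actually arise. Since $2+\gamma>0$ and $1+\gamma>0$, you can bound $|v-v_*|^{2+\gamma}(|v|^{q-2}+|v_*|^{q-2})+|v|^{q-1}|v-v_*|^{1+\gamma}\le C(1+|v|^q+|v_*|^q)$ directly (total degree $q+\gamma<q$), which is what the paper does. Factoring out $|v-v_*|^\gamma$ and then splitting into $\{|v-v_*|<1\}$ and $\{|v-v_*|\ge 1\}$ is therefore valid but unnecessary.
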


\begin{proof}
By exchangeability, it suffices to consider the case $i=1$. By It\^o's formula,
\begin{align*}
|V^1_t|^q
=
|V^1_0|^q
+
\sum_{j=1}^N
\intot\int_0^\infty\int_0^{2\pi}
\Big(
|V^1_\sm+c(V^{1}_\sm,V^{j}_\sm,z,\varphi)|^q
-|V^1_\sm|^q
\Big)
O_{1j}(ds,dz,d\varphi).
\end{align*}
For simplicity, write  $c_s^{1,j}:=c(V^{1}_s,V^{j}_s,z,\varphi)$.
Taking expectations and using exchangeability, we obtain
\begin{align*}
\E[|V^1_t|^q]
\le&\ \E[|V^1_0|^q]
+
\intot\int_0^\infty\int_0^{2\pi}
\E\Big[
|V^1_s+c_s^{1,2}|^q-|V^1_s|^q
\Big]\,ds\,dz\,d\varphi\\
=&\ \E[|V^1_0|^q]
+
\intot\int_0^\infty\int_0^{2\pi}
\E\Big[
|V^1_s+c_s^{1,2}|^q-|V^1_s|^q
-qc_s^{1,2}\cdot V^1_s|V^1_s|^{q-2}
\Big]\,ds\,dz\,d\varphi\\
&\hskip1.35cm
+
q\intot\int_0^\infty\int_0^{2\pi}
\E\Big[
c_s^{1,2}\cdot V^1_s|V^1_s|^{q-2}
\Big]\,ds\,dz\,d\varphi.
\end{align*}
Since $|c_s^{1,2}|\le |V^1_s-V^2_s|$, Taylor's formula gives
\begin{align*}
\left|
|V^1_s+c_s^{1,2}|^q-|V^1_s|^q
-qc_s^{1,2}\cdot V^1_s|V^1_s|^{q-2}
\right|
&\le
C_q\big(|V^1_s|^{q-2}+|c_s^{1,2}|^{q-2}\big)|c_s^{1,2}|^2\\
&\le
C_q\big(|V^1_s|^{q-2}+|V^2_s|^{q-2}\big)|c_s^{1,2}|^2.
\end{align*}
Using Lemma \ref{lem:c}-(i),  we deduce
\begin{align*}
\E[|V^1_t|^q]
\le&\ \E[|V^1_0|^q]
+
C_q\intot
\E\Big[
\big(|V^1_s|^{q-2}+|V^2_s|^{q-2}\big)
|V^1_s-V^2_s|^{2+\gamma}
\Big]\,ds\\
&\hskip3cm
+
C_q\intot
\E\Big[
|V^1_s|^{q-1}|V^1_s-V^2_s|^{1+\gamma}
\Big]\,ds.
\end{align*}
Since $-1<\gamma<0$, we have $q+\gamma<q$. It then follows from Young's inequality that
\begin{align*}
\E[|V^1_t|^q]\le\ \E[|V^1_0|^q]+C_q\intot
\E\Big[
1+|V^1_s|^q+|V^2_s|^q
\Big]\,ds
\le\ \E[|V^1_0|^q]
+
C_q\intot
\E\Big[
1+|V^1_s|^q
\Big]\,ds,
\end{align*}
where we used the exchangeability in the last inequality. Gr\"onwall's inequality then yields
\begin{align*}
    \sup_{0\le t\le T}\E[|V^1_t|^q]
    \le
    C_q\big(\E[|V^1_0|^q]+T\big)e^{C_qT}.
\end{align*}
Since $\E[|V^1_0|^q]=m_q(f_0)<+\infty$, the proof is complete.
\end{proof}

\section{Boltzmann processes with cutoff}
\setcounter{equation}{0} 
\label{sec:ConvCutoff}

Recall the definition of the cutoff kernel $B_K$  in \eqref{Def: kernel cut}. Let $f_0\in \mathcal P_2(\mathbb R^3)$  have finite Fisher
information and finite $q$-th moments for some $q>12$. Then, under \eqref{pa-range},  the cutoff Boltzmann equation
\eqref{Bol}, admits a unique solution $(f_t^K)_{t\geq0}\in C\bigl([0,\infty), L^3(\mathbb R^3)\cap \mathcal P_2(\mathbb R^3)\bigr)$, satisfy \eqref{eq: inv3}. Moreover, by \cite[Theorem~1.5]{IMBERT24}, the Fisher information of $f_t^K$ is non-increasing in time, since the condition $\Lambda_b\ge 1$ is satisfied for any angular kernel $b$. We record below several estimates on $f_t^K$ that will be used later.

\subsection{Some properties of $f_t^K$}

\begin{lem}\label{lem: prop fk}
    Let $f^K_t$ be the cutoff solution defined above and assume that  $f_0$ has finite Fisher information. Then the following estimates hold uniformly in $t\ge 0$ and $K\ge 1$:
\vip
    (i) For any $0\le \alpha\le 2$, $\sup_{w\in\R^3} \int_{\R^3}|v-w|^{-\alpha}f_t^K(v) dv\le 1+I_1(f_0)$.
    \vip
    (ii) For any $1\le p\le 3$, $\int_{\R^3} |f_t^K|^p dv\le C(1+I_1(f_0))^3$.
\vip
    (iii) There exists a constant $M_2$, independent of $K$ and $t$, such that $\int_{\R^3} |f_t^K|^2 dv\ge M_2$.
\end{lem}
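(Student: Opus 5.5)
The proof rests on one input: by \cite[Theorem~1.5]{IMBERT24}, $t\mapsto I_1(f_t^K)$ is non-increasing. Hence $I_1(f_t^K)\le I_1(f_0)$ for every $t\ge0$ and $K\ge1$. From there everything follows from Hardy and Sobolev inequalities on $\R^3$ applied to $\sqrt{f_t^K}$, together with the conservation laws \eqref{eq: inv3} for the cutoff equation. Throughout we write $g:=\sqrt{f_t^K}$, so that $\|g\|_{L^2}=1$ and $4\|\nabla g\|_{L^2}^2=I_1(f_t^K)\le I_1(f_0)$.

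For (i), fix $w\in\R^3$ and split according to $|v-w|\le1$ or $|v-w|>1$. On $\{|v-w|>1\}$ we have $|v-w|^{-\alpha}\le1$, so this part contributes at most $\int f_t^K=1$. On $\{|v-w|\le1\}$ we use $|v-w|^{-\alpha}\le |v-w|^{-2}$ for $0\le\alpha\le2$. Hardy's inequality centered at $w$ then gives
\[
\int|v-w|^{-2}g^2\,dv\le 4\int|\nabla g|^2\,dv=I_1(f_t^K)\le I_1(f_0).
\]
This is the same manipulation as in Lemma \ref{cor: -2}. The bound is uniform in $w$, $t$ and $K$, which gives (i).

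For (ii), the Sobolev inequality in $\R^3$ gives $\|g\|_{L^6}\le C\|\nabla g\|_{L^2}$. Therefore
\[
\|f_t^K\|_{L^3}=\|g\|_{L^6}^2\le C\,I_1(f_t^K)\le C\,I_1(f_0).
\]
For $1\le p\le3$, interpolate between $\|f_t^K\|_{L^1}=1$ and this $L^3$ bound. This yields $\int|f_t^K|^p\le \|f_t^K\|_{L^3}^{3(p-1)/2}\le C(1+I_1(f_0))^3$, since the exponent $3(p-1)/2$ is at most $3$.

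For (iii), we obtain a lower bound on $\|f_t^K\|_{L^2}$ from mass and energy conservation alone. Take $R=\sqrt6$. Chebyshev's inequality with $\int|v|^2f_t^K=3$ gives $\int_{|v|\le R}f_t^K\ge 1-3/R^2=1/2$. Cauchy--Schwarz then yields
\[
\tfrac12\le \int_{B_R}f_t^K\le |B_R|^{1/2}\|f_t^K\|_{L^2},
\]
so $\|f_t^K\|_{L^2}^2\ge 1/(4|B_R|)=:M_2$. This constant is independent of $t$ and $K$.

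The only genuine obstacle is justifying the monotonicity of Fisher information and the energy identity for the cutoff equation uniformly in $K$. The paper has already stated both facts: the former via \cite{IMBERT24} with $\Lambda_b\ge1$, the latter as \eqref{eq: inv3} for $f^K$. I would simply invoke them. One caveat is that the constant in (i) as stated, $1+I_1(f_0)$, requires the sharp Hardy constant $4$ together with $I_1=4\|\nabla\sqrt f\|^2$, which matches exactly.
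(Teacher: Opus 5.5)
Your proof is correct and follows the paper's argument: it uses the monotonicity of $I_1(f_t^K)$, Hardy's inequality centered at $w$ for (i), the Sobolev embedding applied to $\sqrt{f_t^K}$ to bound $\|f_t^K\|_{L^3}$ for (ii), and Chebyshev plus Cauchy--Schwarz on a ball for (iii). The differences are cosmetic. The paper passes from $L^3$ to $L^p$ via the pointwise bound $f^p\le f+f^3$ rather than H\"older interpolation, and it takes $R=3$, giving $M_2=1/(81\pi)$, instead of your $R=\sqrt6$.
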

\begin{proof}
   We first prove the singular moment estimate. Since $0\le \alpha\le2,$ we have 
    \begin{align*}
        \int_{\R^3}|v-w|^{-\alpha}f_t^K(v) dv\le& 1+\int_{\R^3}|v-w|^{-2}f_t^K(v) dv\le 1+\int_{\R^3}|v'|^{-2}f_t^K(v'+w) dv'.
\end{align*}
By Hardy's inequality and the decay of Fisher information, we obtain
\[\int_{\R^3}|v'|^{-2}f_t^K(v'+w) dv'\le 4\int_{\R^3}\Big|\nabla_{v'}\sqrt{f_t^K(v'+w)} \Big|^2 dv' \le I_1(f_0).\]
Therefore, $\sup_{w\in\R^3} \int_{\R^3}|v-w|^{-\alpha}f_t^K(v) dv\le 1+I_1(f_0)$ holds.
The proof of the second inequality is similar. We have 
    \begin{align*}
        \int_{\R^3} |f_t^K|^p dv\le 1+ \int_{\R^3} |f_t^K|^3 dv= 1+ \int_{\R^3} |\sqrt{f_t^K}|^6 dv.
    \end{align*}
By the Sobolev inequality and the decay of Fisher information, we have 
\[\int_{\R^3} |\sqrt{f_t^K}|^6 dv \le C(I_1(f_t^K))^3\le C(I_1(f_0))^3,\]
which implies (ii).
    It remains to prove the lower $L^2$ bound. By conservation of energy, choose  $R= \sqrt{3m_2(f_t^K)}= \sqrt{3m_2(f_0)}=3.$ Hence
    \begin{align*}
        \int_{\R^3} |f_t^K|^2 dv&\ge \int_{\R^3} |f_t^K|^2\indiq_{\{|v|\le R\}} dv \ge |\{|v|\le R\}|^{-1}\Big(\int_{\R^3} f_t^K \indiq_{\{|v|\le R\}} dv\Big)^2\\
        &\ge |\{|v|\le R\}|^{-1} \Big(1-\int_{\R^3} f_t^K \indiq_{\{|v|> R\}} dv\Big)^2\ge 4(9|\{|v|\le 3\}|)^{-1}=1/(81\pi)=:M_2,
    \end{align*}
    where $|\{|v|\le R\}|$ denotes the volume of the ball $\{v\in\R^3:|v|\le R\}$.
\end{proof}

\begin{lem}[Propagation of polynomial moments]\label{lem: prop mom}
    Let $(f^K_t)_{0\le t\le T}$ be the cutoff solution starting from $f_0\in\ca{P}_2(\R^3)$ defined above. Then, for $q>2$,
    \begin{align*}
        \sup_{0\le t\le T}m_q(f^K_t)\le C_{T,q,f_0},
    \end{align*}
   where the constant $C_{T,q,f_0}=C_q(1+m_q(f_0))\exp(C_qT)$.
\end{lem}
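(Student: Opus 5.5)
We follow the computation of Lemma \ref{Part-mom}, now for the cutoff Boltzmann equation in weak form. For a radial test function $\phi(v)=|v|^q$ (justified by truncation, e.g. $\phi_R(v)=(|v|\wedge R)^q$ smoothed, then letting $R\to\infty$ by monotone convergence), the weak formulation with the parametrization \eqref{CK} reads
\begin{align*}
\frac{d}{dt}m_q(f_t^K)=\int_{\R^6}\int_0^\infty\int_0^{2\pi}\Big(|v+c_K(v,v_*,z,\varphi)|^q-|v|^q\Big)\,d\varphi\,dz\,f_t^K(dv)f_t^K(dv_*).
\end{align*}
Because $B_K$ is cutoff and bounded, the integrand is integrable once $m_q(f_t^K)<\infty$, and the truncation argument gives this a priori.

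We split the integrand as $\big(|v+c_K|^q-|v|^q-qc_K\cdot v|v|^{q-2}\big)+qc_K\cdot v|v|^{q-2}$. From \eqref{num}, $|c_K|\le|v-v_*|$, so Taylor's formula bounds the first bracket by $C_q(|v|^{q-2}+|v_*|^{q-2})|c_K|^2$. Lemma \ref{lem:c}-(ii) gives $\int\!\int|c_K|^2\le C|v-v_*|^{2+\gamma}$ and $\int\!\int|c_K|\le C|v-v_*|^{1+\gamma}$, with $C$ independent of $K$. Therefore
\[
\frac{d}{dt}m_q(f_t^K)\le C_q\int_{\R^6}\Big[(|v|^{q-2}+|v_*|^{q-2})|v-v_*|^{2+\gamma}+|v|^{q-1}|v-v_*|^{1+\gamma}\Big]f_t^Kf_t^K.
\]
Since $\gamma\in(-1,0)$, we have $|v-v_*|^{2+\gamma}\le 1+|v|^2+|v_*|^2$ and $|v-v_*|^{1+\gamma}\le 1+|v|+|v_*|$. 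Young's inequality then bounds the integrand by $C_q(1+|v|^q+|v_*|^q)$.

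Using mass one, this gives $\frac{d}{dt}m_q(f_t^K)\le C_q(1+m_q(f_t^K))$. Grönwall's lemma then yields $m_q(f_t^K)\le C_q(1+m_q(f_0))e^{C_qT}$, which is uniform in $K$ because every constant above is $K$-independent.

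The only real obstacle is the justification step. We must show that $m_q(f^K_t)$ is finite, so that the differential inequality is legitimate, and that the truncated test functions pass to the limit. With the cutoff this is routine. The kernel satisfies $\int B_K\,d\sigma\le CK^2$, so on the truncated functional the same estimate holds with a constant independent of $R$. Applying Grönwall at fixed $R$ and then letting $R\to\infty$ gives the claim.
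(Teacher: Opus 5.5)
Your proposal is correct and follows essentially the same route as the paper: the same splitting of the integrand into the second-order Taylor remainder plus the linear term $q|v|^{q-2}v\cdot c_K$, the bound $|c_K|\le|v-v_*|$ together with Lemma~\ref{lem:c}-(ii) (whose constants are independent of $K$), Young's inequality to reach $C_q(1+|v|^q+|v_*|^q)$, and Gr\"onwall. Your justification step is a somewhat more explicit version of the paper's parenthetical truncation remark: you use the finite collision rate $\int B_K\,d\sigma\le CK^2$ to first show $m_q(f_t^K)<\infty$, with a $K$-dependent bound, before running the $K$-uniform estimate.
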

\begin{proof}
Recalling \eqref{CK}, we set $c_K:=c_K(v,v_\ast,z,\varphi)$. If $(f_t^K)_{t\ge0}$ solves \eqref{Bol} associated with the cutoff kernel $B_K$ introduced in \eqref{Def: kernel cut}, then it satisfies the weak formulation
\begin{align}\label{wBol}
\int_{\R^3}\phi(v)f^K_t(dv)
=\int_{\R^3}\phi(v)f_0(dv) 
+
\int_0^t\int_{\R^3}\int_{\R^3}
\ca{A}\phi(v,v_\ast)\,
f^K_s(dv_\ast)f^K_s(dv)\,ds,
\end{align}
for every $\phi\in C_b^2(\R^3)$, where
\[
\ca{A}\phi(v,v_\ast)
:=
\int_0^\infty\int_0^{2\pi}
\bigl[\phi(v+c_K)-\phi(v)\bigr]
\,d\varphi\,dz.
\]
For $q>2$, taking $\phi(v)=|v|^q$ in \eqref{wBol}, (or, more rigorously, replacing $\phi$ by $|v|^q\wedge L$ and letting $L\to\infty$), we obtain
    \begin{align*}
\int_{\bb{R}^3}|v|^qf^K_t(dv)=\int_{\bb{R}^3}|v|^qf_0(v)dv+
\int_0^t\int_{\bb{R}^3}\int_{\bb{R}^3}\ca{G}(v,v_\ast)f^K_s(v_\ast)f^K_s(v)dv_*dvds,
    \end{align*}
where
    \begin{align*}
\ca{G}(v,v_\ast):=\int_{0}^{\infty}\int_0^{2\pi}
  [|v + c_K|^q-|v|^q-qv\cdot c_K|v|^{q-2}]dz d\varphi
  +q|v|^{q-2}v\cdot\int_{0}^{\infty}\int_0^{2\pi}
   c_K dz d\varphi.
    \end{align*}
 Using $|c_K|\le |v-v_*|$ and the remainder estimate in Taylor's formula, we obtain 
\[
\Big||v+c_K|^q-|v|^q-q|v|^{q-2}v\cdot c_K\Big|
\le C_q\bigl(|v|^{q-2}+|c_K|^{q-2}\bigr)|c_K|^2\le C_q\bigl(|v|^{q-2}+|v_*|^{q-2}\bigr)|c_K|^2.
\]
Moreover, Lemma~\ref{lem:c}-(ii) yields
\[
\Big|
\int_0^\infty\!\!\int_0^{2\pi}
c_K\,d\varphi\,dz
\Big|
\le C|v-v_*|^{1+\gamma},
\qquad
\int_0^\infty\!\!\int_0^{2\pi}
|c_K|^2\,d\varphi\,dz
\le C|v-v_*|^{2+\gamma},
\]
which implies  that
\begin{align*}
|\mathcal G(v,v_*)|
&\le C_q\Big[
\bigl(|v|^{q-2}+|v_*|^{q-2}\bigr)|v-v_*|^{2+\gamma}
+|v|^{q-1}|v-v_*|^{1+\gamma}
\Big].
\end{align*}
Since $-1<\gamma<0$, we have $q+\gamma<q$. It then follows from Young's inequality that
\[
|\mathcal G(v,v_*)|
\le C_q\bigl(1+|v|^q+|v_*|^q\bigr).
\]
Therefore, substituting this estimate into the weak formulation yields
\begin{align*}
\int_{\R^3}|v|^q f^K_t(dv)
&\le \int_{\R^3}|v|^q f_0(v)\,dv
+ C_q \Big(1+\int_0^t \int_{\R^3} |v|^q f^K_s(v)\,dv\,ds\Big).
\end{align*}
Applying Gr\"onwall's lemma, we conclude that
\begin{align*}
     \sup_{0\le t\le T} m_q(f^K_t)\le C_q(1+m_q(f_0))\exp(C_qT),
\end{align*}
  which proves the desired moment estimate.
\end{proof}

\begin{lem}\label{lem: ftKft}
Assume \eqref{con} and \eqref{pa-range}. Let  $(f_t)_{t\ge0}$ be the
solution to the non-cutoff Boltzmann equation   \eqref{Bol} and  let $(f^K_t)_{t\ge0}$ be the cutoff solution introduced above. Assume that $f_0\in\ca{P}_2(\R^3)$ has finite Fisher information, i.e. $I_1(f_0)<\infty$. Then, for every $0\le t\le T$ and $K\ge 1$, there exists some constant $C_{T,f_0}$ depends on $T$ exponentially and $I_1(f_0)$, such that
    \beq\label{cutf}
\cW_2^2(f_t^K,f_t)\le C_{T,f_0}K^{1-1/\nu}.
\eeq
\end{lem}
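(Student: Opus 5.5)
We will prove \eqref{cutf} with a Tanaka-type coupling of two nonlinear Boltzmann processes: one driven by the non-cutoff kernel and one by the cutoff kernel, following \cite{MR3456347,MR3784497}. Fix $t\in[0,T]$. We build a common probability space carrying the following objects: a Poisson measure $M(ds,d\alpha,dz,d\varphi)$ on $[0,T]\times[0,1]\times[0,\infty)\times[0,2\pi)$ with intensity $ds\,d\alpha\,dz\,d\varphi$; a $f_0$-distributed variable $V_0$; and, for each $s$, a measurable map $\alpha\mapsto(X_s(\alpha),X^K_s(\alpha))$ realising an optimal $\cW_2$ coupling of $(f_s,f^K_s)$ (see \cite[Lemma 3]{MR3456347}). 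On this space we set
\[
V_t=V_0+\int_0^t\!\!\int c(V_{s-},X_s(\alpha),z,\varphi)\,M,\qquad
V^K_t=V_0+\int_0^t\!\!\int c_K\big(V^K_{s-},X^K_s(\alpha),z,\varphi+\hat\varphi_s\big)\,M,
\]
where $\hat\varphi_s$ is a Tanaka shift. Then $V_t\sim f_t$ and $V^K_t\sim f^K_t$, using uniqueness of the (cutoff and non-cutoff) equations in the class $L^\infty([0,T],L^3)$. Consequently $\cW_2^2(f_t,f^K_t)\le \E|V_t-V^K_t|^2$, while also $\E|X_s-X^K_s|^2=\cW_2^2(f_s,f_s^K)\le \E|V_s-V^K_s|^2$.

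Next we apply It\^o's formula and take expectations. This gives
\[
\E|V_t-V^K_t|^2=\int_0^t\E\int_0^1\!\!\int\!\!\int\Big(|V_s+c-V^K_s-c_K|^2-|V_s-V^K_s|^2\Big)\,d\varphi\,dz\,d\alpha\,ds .
\]
The integrand is bounded by Lemma \ref{fundest}, applied with $v=V_s$, $v_*=X_s$, $\tv=V^K_s$, $\tv_*=X^K_s$ and $y=0$. In the simplest version we choose $\hat\varphi=\varphi_0(V_s-X_s,V^K_s-X^K_s)$, which is exactly the $y=0$ case. The resulting bound is
\[
C|V_s-V^K_s|^2\big(1+|V^K_s-X^K_s|^\gamma\big)+C|X_s-X^K_s|^2|V^K_s-X^K_s|^\gamma+CK^{1-1/\nu}\big(1+m_2\text{ terms}\big).
\]
The last term contributes $C_TK^{1-1/\nu}$, by energy conservation for both equations.

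\textbf{Main obstacle.} The hard part is the singular weight $|V^K_s-X^K_s|^\gamma$, which multiplies squared distances. If we condition on $V$ and integrate in $\alpha$, the first singular term becomes $|V_s-V_s^K|^2\int|V^K_s-x|^\gamma f^K_s(dx)\le(1+I_1(f_0))|V_s-V_s^K|^2$, by Lemma \ref{lem: prop fk}(i); here we use that $X^K_s(\alpha)$ has law $f^K_s$ when $\alpha$ is uniform and independent of $V^K_s$. The second term $\E_\alpha\big[|X_s-X^K_s|^2|V^K_s-X^K_s|^\gamma\big]$ is harder, because $X^K_s-X_s$ is correlated with $X^K_s$. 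We would try to remove the correlation by swapping roles: we rewrite the kernel with the singularity at whichever of $v-v_*$, $\tv-\tv_*$ is more favourable. The $\min$ structure in \eqref{ineq: KKp22cut2} allows us to take $|V_s-X_s|^\gamma$ instead. We then handle $|X_s-X^K_s|^2|V_s-X_s|^\gamma$ by exchanging the roles of $V$ and $X$: since $(V_s,V_s^K)$ and $(X_s,X_s^K)$ are both couplings of $(f_s,f_s^K)$, we replace $X$ by an independent copy realising the optimal coupling and use Lemma \ref{lem: prop fk}(i) for $f_s$ as well. If this symmetrisation fails, the fallback is a Hölder split with truncation $|V-X|\ge \epsilon$. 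That gives $\epsilon^\gamma\cW_2^2$ plus a remainder controlled by the $L^3$ bounds of Lemma \ref{lem: prop fk}(ii), which would cost only a slightly worse power of $K$.

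With these bounds in hand we obtain
\[
\E|V_t-V^K_t|^2\le C\int_0^t\E|V_s-V^K_s|^2\,ds+C_TK^{1-1/\nu}.
\]
Gr\"onwall's lemma then yields \eqref{cutf}, with a constant growing exponentially in $T$ and depending on $I_1(f_0)$.
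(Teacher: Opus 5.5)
Your construction is the paper's. You use the same Tanaka-type coupling of the non-cutoff and cutoff nonlinear processes through one Poisson measure $M$, an $\alpha$-realised optimal coupling $(X_s(\alpha),X^K_s(\alpha))$ of $(f_s,f^K_s)$, and the shift $\varphi_0(V_s-X_s(\alpha),V^K_s-X^K_s(\alpha))$. You bound the drift by \eqref{ineq: KKp22cut11} and \eqref{ineq: KKp22cut2} (which you route through Lemma \ref{fundest} with $y=0$) and close with Gr\"onwall.

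The weak point is the step you call the main obstacle. You never commit to an argument for
\[
\int_0^1\E\big[|X_s(\alpha)-X^K_s(\alpha)|^2\,|V^K_s-X^K_s(\alpha)|^\gamma\big]\,d\alpha ,
\]
and the difficulty you attribute to it is not real. The correlation between $X_s(\alpha)-X^K_s(\alpha)$ and $X^K_s(\alpha)$ is harmless: both are functions of $\alpha$ alone, while $V^K_s$ is a function of $\omega$ alone and has law $f^K_s$. So fix $\alpha$ and take the $\omega$-expectation of the weight only. By Lemma \ref{lem: prop fk}(i),
\[
\E\big[|V^K_s-X^K_s(\alpha)|^\gamma\big]\le\sup_{w}\int|v-w|^\gamma f^K_s(dv)\le 1+I_1(f_0).
\]
Integrating in $\alpha$ then bounds the term by $(1+I_1(f_0))\,\cW_2^2(f_s,f^K_s)\le(1+I_1(f_0))\,\E|V_s-V^K_s|^2$. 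This is your first-term argument with the order of integration reversed, and it is exactly what the paper does.

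Your proposed remedies do not supply this step. Moving the weight to $|V_s-X_s(\alpha)|^\gamma$ via the $\min$ leaves $X_s-X^K_s$ just as correlated with $X_s$, so it does not address the concern you raised. It only works through the same Fubini observation, and it then also needs a uniform singular-moment bound for the non-cutoff $f_s$, which Lemma \ref{lem: prop fk}(i) does not state. The ``independent copy'' is already built in: $(X_s,X^K_s)$ is independent of $(V_s,V^K_s)$ by the product structure in $(\omega,\alpha)$. The truncation fallback, by your own account, loses a power of $K$, so it would not prove \eqref{cutf} with exponent $1-1/\nu$. Once the Fubini step is inserted, the rest of your argument closes as written.

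A minor further point concerns identifying $\mathcal{L}(V_t)=f_t$. This needs uniqueness for the \emph{linear} equation with frozen background $(f_s)$, because you do not know a priori that $\mathcal{L}(V_t)\in L^3$. Uniqueness of the nonlinear equation in $L^\infty([0,T],L^3)$ does not cover it; the paper takes this fact from the literature.
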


\subsection{The Boltzmann process with cutoff}
    
\begin{prop}\label{Bp}
Assume \eqref{con} \eqref{pa-range}.  Let $f_0\in\cP_2(\rd)$ have finite Fisher information. Fix $K\ge 1$,  let
$M(ds,d\alpha,dz,d\varphi)$ be a Poisson measure on
$[0,\infty)\times[0,1]\times[0,\infty)\times[0,2\pi)$ with  intensity $ds d\alpha dzd\varphi$. Consider $W_0\sim f_0$ (independent of $M$) and let $W_t^{*K}(\alpha)$ be a c\`adl\`ag $\alpha$-stochastic process with distribution $f_t^K$ at  $t\ge 0$. For $t\ge0$, consider the cutoff nonlinear stochastic equation 
\beq\label{cbp}
W_t^K=W_0+\intot\int_0^1\int_0^\infty\int_0^{2\pi} c_K(W_\sm^K, W_\sm^{*K}(\alpha),z,\varphi)M(ds,d\alpha,dz,d\varphi).
\eeq
Then \eqref{cbp} admits a unique strong solution, and this solution is pathwise unique.  Moreover, $W_t^K$ is $f_t^K$-distributed for each $t\ge0$.
\end{prop}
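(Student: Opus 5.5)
The key feature of \eqref{cbp} is that it is a jump SDE with \emph{finite} jump activity. Indeed, by \eqref{CK}, $c_K(v,v_*,z,\varphi)=0$ as soon as $z>K(|v-v_*|^\gamma\wedge K)$, and in particular whenever $z>K^2$. All jumps are therefore driven by the restriction of $M$ to $[0,T]\times[0,1]\times[0,K^2]\times[0,2\pi)$. This restriction has finite intensity $2\pi K^2 T$, so almost surely it consists of finitely many atoms $(s_k,\alpha_k,z_k,\varphi_k)$ with $s_1<s_2<\dots$. The plan is to build the solution explicitly, atom by atom. The process $\alpha\mapsto W^{*K}_{s}(\alpha)$ is a given deterministic process on the probability space $([0,1],d\alpha)$; it is not part of the unknown, so the equation is ``nonlinear'' only through the prescribed law $f^K$.

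\textbf{Existence and pathwise uniqueness.} Set $W^K_t=W_0$ for $t<s_1$. For each $k\ge1$, set
\[W^K_{s_k}=W^K_{s_k-}+c_K\bigl(W^K_{s_k-},W^{*K}_{s_k-}(\alpha_k),z_k,\varphi_k\bigr),\]
and keep $W^K$ constant on $[s_k,s_{k+1})$. Atoms with $z_k>K^2$ give a zero jump, so only finitely many jumps occur on any $[0,T]$. The constructed process is càdlàg, adapted and solves \eqref{cbp}. It is also the only solution: any other solution driven by the same $W_0$ and the same $M$ is constant between consecutive atoms and must perform the identical jump at each $s_k$. An induction over $k$ therefore gives equality of the paths. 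Measurability of $c_K$ (through $G$, $\Gamma$ and the indicator) and of $W^{*K}$ ensures that the construction is measurable, so this is a strong solution. No Lipschitz estimate is needed.

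\textbf{Identification of the law.} Let $g_t$ be the law of $W^K_t$. For $\phi\in C_b^2(\R^3)$, Itô's formula for finite-activity jump processes, after taking expectations, gives
\[\int\phi\,dg_t=\int\phi\,df_0+\int_0^t\E\Big[\int_0^1\int_0^\infty\int_0^{2\pi}\bigl(\phi(W^K_s+c_K(W^K_s,W^{*K}_s(\alpha),z,\varphi))-\phi(W^K_s)\bigr)d\varphi\,dz\,d\alpha\Big]ds.\]
The integrand is bounded by $2\|\phi\|_\infty\indiq_{\{z\le K^2\}}$, so all expectations are finite. Since $W^{*K}_s(\alpha)$ has law $f^K_s$ under $d\alpha$, this reads
\[\int\phi\,dg_t=\int\phi\,df_0+\int_0^t\!\int\!\int \ca A\phi(v,v_*)\,f^K_s(dv_*)\,g_s(dv)\,ds,\]
which is the \emph{linear} equation obtained from \eqref{wBol} by freezing the second factor to $f^K_s$.

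The main obstacle is to show $g_t=f^K_t$, i.e.\ uniqueness for this linear equation. Both $(g_t)$ and $(f^K_t)$ solve it: $(f^K_t)$ does by \eqref{wBol}. The operator $\mathcal L_s\phi(v)=\int\ca A\phi(v,v_*)f^K_s(dv_*)$ is bounded on bounded measurable functions, with $\|\mathcal L_s\phi\|_\infty\le 4\pi K^2\|\phi\|_\infty$. By a monotone class argument, the linear equation then extends from $C_b^2$ to all bounded measurable $\phi$. I will compare the two solutions in total variation. For $\|\phi\|_\infty\le1$,
\[\Bigl|\int\phi\,d(g_t-f^K_t)\Bigr|\le\int_0^t\Bigl|\int\mathcal L_s\phi\,d(g_s-f^K_s)\Bigr|ds\le 4\pi K^2\int_0^t\|g_s-f^K_s\|_{TV}ds.\]
Taking the supremum over such $\phi$ and applying Grönwall's lemma gives $g\equiv f^K$.

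A technical point remains: the measurability of $s\mapsto\mathcal L_s\phi$ and the use of $W^{*K}_{s-}$ versus $W^{*K}_s$. These coincide for a.e.\ $s$ because a càdlàg process has countably many jump times, so this does not affect the argument.
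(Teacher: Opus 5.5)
Your proof is correct. The existence and pathwise-uniqueness part matches the paper's: $c_K=0$ for $z>K^2$, so only finitely many atoms of $M$ matter on $[0,T]$, and the solution is built (and forced) recursively between them.

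You differ from the paper in how the law is identified, and you are more careful there. The paper applies It\^o's formula and then asserts that $\mathcal L(W^K_t)$ solves the cutoff Boltzmann equation, concluding by uniqueness of weak solutions of that nonlinear equation. Strictly speaking, It\^o's formula only shows that $g_t=\mathcal L(W^K_t)$ solves the \emph{linear} equation in which the second slot of the collision operator is frozen at $f^K_t$. Nonlinear uniqueness therefore cannot be invoked until one already knows $g=f^K$. You instead note that $f^K$ solves the same linear equation by \eqref{wBol}, and you prove uniqueness for it directly by a total-variation Gr\"onwall argument based on $\|\mathcal L_s\phi\|_\infty\le 4\pi K^2\|\phi\|_\infty$. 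This makes the step self-contained: it uses only that $f^K$ satisfies \eqref{wBol}, not the uniqueness theory for the nonlinear cutoff equation, which the paper merely quotes under extra moment assumptions. The paper's version buys only brevity.

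Two small simplifications are available. First, the monotone-class extension is unnecessary: $\|\mu\|_{TV}$ is already the supremum of $\int\phi\,d\mu$ over $\phi\in C_c^\infty$ with $\|\phi\|_\infty\le1$. The bound $|\int\mathcal L_s\phi\,d\mu|\le\|\mathcal L_s\phi\|_\infty\|\mu\|_{TV}$ needs only that $\mathcal L_s\phi$ be bounded and measurable. Second, you can sidestep the measurability of $s\mapsto\|g_s-f^K_s\|_{TV}$ by running Gr\"onwall, or simply iterating, on the nondecreasing function $h(t)=\sup_{s\le t}\|g_s-f^K_s\|_{TV}\le 2$.
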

\begin{proof}
The pathwise existence and uniqueness of \eqref{cbp} follows immediately  from the cutoff assumption. Indeed, the
total collision rate is finite since $\int_{\mathbb S^2}B_K(r,\theta)\,d\sigma
=
K(r^\gamma\wedge K)
\leq K^2.$
Consequently, the associated Poisson random measure generates only finitely many jumps on every bounded time interval. The process can therefore be constructed recursively between successive jump times. It remains to identify the time marginals. Let $\phi$ be a smooth test function. Applying It\^o's formula for jump processes to $\phi(W_t^K)$ and taking expectations, we obtain the weak formulation satisfied by the law of $W_t^K$. Using that  $W_t^{*K}(\alpha)$ has law $f_t^K$, it follows that the law of $W_t^K$ solves the cutoff Boltzmann equation with initial datum $f_0$. By the uniqueness of weak solutions to the cutoff Boltzmann equation, we conclude that $\ca{L}(W_t^K)=f_t^K,  t\ge0$. 

\end{proof}

\subsection{Proof of Lemma \ref{lem: ftKft}}
We consider the  non-cutoff Boltzmann process 
 \beq\label{cbpw}
W_t=W_0+\intot\int_0^1\int_0^\infty\int_0^{2\pi} c(W_\sm, W_\sm^{*}(\alpha),z,\varphi)M(ds,d\alpha,dz,d\varphi),
\eeq
where $W_t^{*}(\alpha)$  is an $f_t$-distributed  $\alpha$ random variable.  As proved in \cite[Proof of Lemma 4.6]{MR2398952}, \eqref{cbpw} admits a unique solution in law and satisfies
$\mathcal L(W_t)=f_t$ for all $t\ge0$. Let $(W_t^{*}(\alpha),W_t^{*K}(\alpha))$  be an optimal coupling between $f_t$ and $f_t^K$, namely,
\begin{align*}
    \int_0^1 |W_t^{*}(\alpha)-W_t^{*K}(\alpha)|^2d\alpha =\cW_2^2 (f_t,f_t^K).
\end{align*}
We consider also the cutoff Boltzmann process
\[W_t^K=W_0+\intot\int_0^1\int_0^\infty\int_0^{2\pi}c_K(W_\sm^K, W_\sm^{*K}(\alpha),z,\varphi+\varphi_{s,\alpha,K})M(ds,d\alpha,dz,d\varphi),
 \]
where $\varphi_{s,\alpha,K}:=\varphi_0(W_s-W_s^{*}(\alpha), W_s^K-W_s^{*K}(\alpha))$.  By Proposition~\ref{Bp}, this equation admits a unique strong solution
satisfying $\mathcal L(W_t^K)=f_t^K$ for all $t\ge0$.
 Fix $T>0$ and restrict attention to the time interval $[0,T]$.
Define 
$$\tilde\Delta_{c,c_K}:=c(W_s, W_s^{*}(\alpha),z,\varphi)-c_K(W^K_s, W_s^{*K}(\alpha),z,\varphi+\varphi_{s,\alpha,K}).$$
 Note that $\tilde\Delta_{c,c_K}$ is precisely the quantity appearing in Lemma \ref{lem: 3ccut} with $v=W_s$, $v_*=W_s^{*}(\alpha)$, $\tv=W^K_s$ and $\tv_*=W_s^{*K}(\alpha)$.
Then, applying It\^o's formula  and taking expectations, we obtain
\begin{align*}
 \E[|W_t-W^K_t|^2] 
=   \intot\int_0^1\int_0^\infty\int_0^{2\pi} \E[2\tilde\Delta_{c,c_K}\cdot(W_{s}-W^K_{s})+|\tilde\Delta_{c,c_K}|^2] ds d\alpha dz d\varphi.
\end{align*}
Using estimates \eqref{ineq: KKp22cut11} and \eqref{ineq: KKp22cut2}  from  Lemma \ref{lem: 3ccut}, the right hand side is bounded by 
\begin{align}\label{right-b}
& 2\intot\int_0^1\E\Big[|W_{s}-W^K_{s}|\Big|\int_0^\infty\int_0^{2\pi} \tilde\Delta_{c,c_K}dz d\varphi\Big|\Big]dsd\alpha+\intot\int_0^1\int_0^\infty\int_0^{2\pi} \E[|\tilde\Delta_{c,c_K}|^2] ds d\alpha dz d\varphi
\notag\\
\le & C\intot\int_0^1 \E\Big[\Big(|W_{s}-W^K_{s}|^2+|W^*_{s}(\alpha)-W^{*K}_{s}(\alpha)|^2\Big)|W^K_{s}-W^{*K}_{s}(\alpha)|^\gamma\Big]ds d\alpha\nonumber\\
&+CK^{1-1/\nu}\int_0^t\int_0^1\E\Big[1+|W_{s}|^2+|W^K_{s}|^2+|W^*_{s}(\alpha)|^2+|W^{*K}_{s}(\alpha)|^2\Big] d\alpha ds.
\end{align}
We next estimate the terms appearing on the right hand side of \eqref{right-b}. First,  recalling \eqref{eq: inv3}, we have
\begin{align*}
\int_0^1\E\Big[1+|W_s|^2&+|W^K_s|^2+|W^*_s(\alpha)|^2+|W^{*K}_s(\alpha)|^2\Big] d\alpha= 1+2\int_{\R^3}|v|^2 (f_s+f_s^K)(dv)\le C.
\end{align*}
Next, by Lemma~\ref{lem: prop fk}-(i),  we have 
\begin{align*}
    \E[|W^K_{s}-W^{*K}_{s}(\alpha)|^\gamma]=\int_{\R^3} |v-W^{*K}_{s}(\alpha)|^\gamma f^K_s(dv)\le \sup_{w\in\R^3}\int_{\R^3} |v-w|^\gamma f^K_s(dv) \le 1+I_1(f_0) <\infty.
    \end{align*}
Since $\ca{L}(W^{*K}_{s}(\alpha))=f_s^K$, and using again
Lemma~\ref{lem: prop fk}-(i), we obtain
\[\int_0^1 |W^K_{s}-W^{*K}_{s}(\alpha)|^\gamma d\alpha=\int_{\R^3} |W^{K}_{s}-v|^\gamma f^K_s(dv)\ \le\sup_{w\in\R^3}\int_{\R^3} |v-w|^\gamma f^K_s(dv) \le 1+I_1(f_0) <\infty.
    \]
Moreover, recalling  $\int_0^1 |W_s^{*}(\alpha)-W_s^{*K}(\alpha)|^2d\alpha =\cW_2^2 (f_s,f_s^K)\le \E[|W_{s}-W^K_{s}|^2]$, we thus have 
      \begin{align*}
&\int_0^1 \E\Big[(|W_{s}-W^K_{s}|^2+|W^{*}_{s}(\alpha)-W^{*K}_{s}(\alpha)|^2)|W^K_{s}-W^{*K}_{s}(\alpha)|^\gamma\Big] d\alpha\\
    &\le C_{f_0} \E[|W_s-W^K_s|^2]+C_{f_0}\cW_2^2 (f_s,f_s^K)\le C_{f_0} \E[|W_s-W^K_s|^2].
\end{align*}
Substituting the above estimates into \eqref{right-b} and recalling that $t\le T$, we obtain
 \begin{align*}
     \E[|W_t-W^K_t|^2]\le C_{f_0}\int_0^t  \E[|W_s-W^K_s|^2]ds+CTK^{1-1/\nu}.
 \end{align*}
An application of Grönwall's lemma then yields the desired estimate.

\section{Couplings}
In this section, we construct a suitable  coupling between the Kac's particle system and the solution $f$ to \eqref{Bol}, following the approach developed in  \cite{MR3476628,MR3769742,MR3456347}.  
The main idea is to realize both the particle system and a family of Boltzmann processes on the same probability space and to drive them by a common Poisson random measure. This coupling will enable us to compare their trajectories pathwise and derive quantitative estimates for the propagation of chaos.

\subsection{Construction of optimal map}
A key ingredient is the construction of suitable $f$-distributed random variables appearing in the Boltzmann process \eqref{cbp}. These variables must be chosen in an optimal way so that they remain as close as possible to the particles involved in the corresponding collisions of Kac's particle system. To this end, we introduce a variant of the optimal coupling construction developed in \cite[Lemma~3]{MR3476628}, which will be used throughout the sequel.

\vip

\begin{lem}\label{ww2p}
Let $f_t\in C([0,\infty), \ca{P}_2(\R^3))$ admit a density for every $t>0$. Fix $N\ge 2$ and $\bw\in \R^{3N}$. For $j=1,...,N$, there exists an $\rd$-valued function $\Pi^{j}_t(\bw,\alpha),$ measurable in $(t,\bw,\alpha)\in[0,\infty]\times \R^{3N}\times[0,1]$ such that the following properties hold.

\vip

(i) For any $\bw\in \R^{3N}$ and  $t\ge 0$, $$\int_0^1 \frac{1}{N}\sum_{j=1}^N|\Pi_t^{j}(\bw,\alpha)-w_j|^2 d\alpha=\cW^2_2(f_t, \bar{\mu}_{\bw}),$$
where  $\bar{\mu}_{\bw}=\frac{1}{N}\sum_{j=1}^N\delta_{w_j}$ is the associated empirical measure.

\vip
(ii) For any $\bw\in  \R^{3N},\ t\ge 0$, and  any $f_t$-integrable function $\phi,$ we have $$\int_0^1 \frac{1}{N}\sum_{j=1}^N\phi(\Pi_t^{j}(\bw,\alpha)) d\alpha=\int_\rd\phi(u)f_t(du).$$

\vip
(iii) Let $\bY\in \R^{3N}$ be any  exchangeable random vector, for $t\ge 0, j=1,...,N$, and  any $f_t$-integrable function $\phi,$  we have $$\E\left[\int_0^1\phi(\Pi^{j}_t(\bY,\alpha))d\alpha \right]=\int_\rd\phi(u)f_t(du).$$

\end{lem}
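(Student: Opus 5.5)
The plan is to build $\Pi_t^j$ from an optimal transport plan between $f_t$ and $\bar\mu_{\bw}$, disintegrated with respect to the empirical measure, and then to symmetrize in $\bw$ so that exchangeability passes to the maps.

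\textbf{Step 1: the transport plan.} Fix $t$ and $\bw$. Since $f_t$ has a density (for $t=0$ we can use the general, possibly non-deterministic, plan), there is an optimal plan $R_t^{\bw}\in\cP(\rd\times\rd)$ with marginals $f_t$ and $\bar\mu_{\bw}$. Its second marginal is atomic, so it can be written as
\[
R_t^{\bw}(du,dw)=\frac1N\sum_{j=1}^N \rho^j_{t,\bw}(du)\,\delta_{w_j}(dw),
\]
where each $\rho^j_{t,\bw}$ is a probability measure and $\frac1N\sum_j\rho^j_{t,\bw}=f_t$. If some of the $w_j$ coincide, the conditional law at that common point is split equally among the corresponding indices $j$. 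This choice is what makes the construction permutation-equivariant: permuting the entries of $\bw$ permutes the family $(\rho^j_{t,\bw})_j$ in the same way.

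\textbf{Step 2: from measures to maps.} For each $j$ let $\Pi^j_t(\bw,\cdot)$ be a quantile-type representation of $\rho^j_{t,\bw}$ on $[0,1]$, that is, a map whose image of Lebesgue measure is $\rho^j_{t,\bw}$. One way is to compose a fixed Borel isomorphism between $[0,1]$ and $\rd$ with the one-dimensional generalized inverse of the distribution function; another is the Skorokhod representation for laws on $\rd$ that is measurable in the law. With this choice, (i) follows because
\[
\int_0^1\frac1N\sum_j|\Pi^j_t(\bw,\alpha)-w_j|^2\,d\alpha=\int|u-w|^2\,R_t^{\bw}(du,dw)=\cW_2^2(f_t,\bar\mu_{\bw}).
\]
Property (ii) is the marginal identity $\frac1N\sum_j\rho^j_{t,\bw}=f_t$.

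\textbf{Step 3: exchangeability, property (iii).} Assume the map $\bw\mapsto(\rho^j_{t,\bw})_j$ is permutation-equivariant, meaning $\rho^{\tau(j)}_{t,\bw_\tau}=\rho^j_{t,\bw}$ where $\bw_\tau$ denotes $\bw$ with entries permuted by $\tau$. Then for exchangeable $\bY$ the quantity $\E\bigl[\int_0^1\phi(\Pi^j_t(\bY,\alpha))\,d\alpha\bigr]=\E\bigl[\rho^j_{t,\bY}(\phi)\bigr]$ does not depend on $j$. Averaging over $j$ and applying (ii) gives $\int\phi\,f_t$. To obtain equivariance I would not rely on selecting an optimal plan equivariantly. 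Instead, take any measurable selection $R_t^{\bw}$ and replace the family $(\rho^j)$ by its average over all permutations $\tau$:
\[
\tilde\rho^{\,j}_{t,\bw}:=\frac1{N!}\sum_\tau \rho^{\tau(j)}_{t,\bw_\tau}.
\]
The resulting plan $\frac1{N!}\sum_\tau$ (plan for $\bw_\tau$, relabelled) still couples $f_t$ with $\bar\mu_{\bw}$, because $\bar\mu_{\bw_\tau}=\bar\mu_{\bw}$. It is also still optimal, since its cost is an average of costs each equal to $\cW_2^2(f_t,\bar\mu_{\bw})$. So (i) and (ii) are preserved.

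\textbf{Main obstacle: joint measurability.} The delicate point is measurability of the construction jointly in $(t,\bw,\alpha)$. I would address it with a measurable selection theorem. The set of optimal plans is nonempty and compact, and it varies upper hemicontinuously in $(t,\bw)$ because $t\mapsto f_t$ is continuous in $\cW_2$. The Kuratowski--Ryll-Nardzewski theorem then gives a measurable selection $R_t^{\bw}$, exactly as in \cite[Lemma~3]{MR3476628}. The disintegration at atoms is an explicit ratio $R(\cdot\times\{w_j\})/\bar\mu_{\bw}(\{w_j\})$, which is measurable. Finally, the quantile representation is measurable in the law, so the composed maps $\Pi^j_t(\bw,\alpha)$ are measurable in all variables.
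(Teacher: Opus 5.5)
Your proof is correct. Its skeleton matches the paper's:
\begin{itemize}
\item a measurable selection of an optimal plan between $f_t$ and $\bar\mu_{\bw}$ (your Kuratowski--Ryll-Nardzewski argument via upper hemicontinuity is the content of Villani's Corollary~5.22, which the paper cites);
\item the conditional law $R(\cdot\times\{w_j\})/\bar\mu_{\bw}(\{w_j\})$ at the atom $w_j$ (your ``equal splitting'' at coincident points is exactly this ratio);
\item a measurable representation of that kernel by the uniform variable $\alpha$.
\end{itemize}

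The real difference is how you get the permutation equivariance needed for (iii).

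The paper takes the selection as a measurable function of the pair of measures $(f_t,\bar\mu_{\bw})$. The plan therefore depends on $\bw$ only through the permutation-invariant empirical measure. It then sets $\Pi^j_t(\bw,\alpha)=g(t,\bw^j,\alpha)$, where $\bw^j$ is $\bw$ reordered so that $w_j$ comes first. This makes $\int_0^1\phi(\Pi^j_t(\bw,\alpha))\,d\alpha$ a fixed function of $(\bar\mu_{\bw},w_j)$. Exchangeability of $\bY$ then shows its expectation does not depend on $j$, and (ii) finishes (iii).

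You instead allow a selection that depends on $\bw$ itself and restore equivariance by the $N!$-average $\tilde\rho^{\,j}_{t,\bw}$. Your three checks on this average are all correct:
\begin{itemize}
\item its average over $j$ is still $f_t$, so (ii) holds;
\item its cost is still $\cW_2^2(f_t,\bar\mu_{\bw})$, so (i) holds;
\item it satisfies $\tilde\rho^{\,\sigma(j)}_{t,\bw_\sigma}=\tilde\rho^{\,j}_{t,\bw}$.
\end{itemize}

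Your route is slightly more robust: it works for any measurable selection in $(t,\bw)$ and makes the equivariance explicit, whereas in the paper it is hidden in the notation $R_{t,\mu_{\bw}}$. The paper's route is shorter, because selecting at the level of measures gives the invariance for free and no averaging step is needed.
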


\begin{proof}
For any fixed $j\in \{1,...,N\}$, we are going to construct a measurable map $\Pi_t^{j}:\mathbb{R}_+\times \R^{3N}\times[0,1]\rightarrow \R^3$,\,  $(t,\bw,\alpha)\mapsto\Pi_t^{j}(\bw,\alpha).$ 
\vip
Let $n\ge1$, and for $\be\in \R^{3n}$,  define the empirical measure $\mu_{\be}=\frac{1}{n}\sum_{i=1}^n\delta_{e_i}$.  Since $f_t\in C([0,\infty),\cP_2(\rd))$, a measurable selection result [see, e.g., Corollary 5.22 of Villani \cite{Vtot2003}] yields a measurable map $(t,\be)\mapsto R_{t,\mu_{\be}}$ such that $R_{t,\mu_{\be}}\in \mathcal{P}(\rd\times\rd)$  is an optimal coupling between $f_t$ and $\mu_{\be}.$  For every Borel set $B\subseteq\rd$,  we  define 
$$
F(t,\be,B):=\frac{R_{t,\mu_{\be}}\Big(B, \{e_1\}\Big)}{R_{t,\mu_{
\be}}\Big(\rd, \{e_1\}\Big)}=\frac{nR_{t,\mu_{\be}}\Big(B, \{e_1\}\Big)}{\sharp\{l: e_l=e_1\}},
$$
for  $t\ge 0,\ \be\in\R^{3n}$.  Since  $R_{t,\mu_{\be}}$ is measurable in $(t, \be)$, then $F$ is a measurable probability kernel from $\R_+\times\R^{3n}$ to $\R^3$. 
By the measurable representation theorem for probability kernels, there exists a measurable map
$g:\R_+\times\R^{3n}\times[0,1]\mapsto\R^3$ such that, $g(t,\be,\alpha)$ has law  $F(t,\be,\cdot)$  whenever $\alpha$ is uniformly distributed on $[0,1]$.
\vip
We now take $n=N$. For $\bw=(w_1,\ldots,w_N)\in\R^{3N}$, define
\begin{equation}\label{cp}
\Pi_t^j(\bw,\alpha)
:=
g(t,\bw^j,\alpha),
\end{equation}
where $\bw^j\in\R^{3N}$ denotes the vector obtained from $\bw$
by moving its $j$-th coordinate to the first position and keeping the
relative order of the remaining coordinates unchanged.  We write $\sum_{j=1}^N\delta_{w_j}=\sum_{i=1}^m l_i\delta_{a_i},$
where $a_1,\ldots,a_m$ are the distinct values among
$w_1,\ldots,w_N$, and  $l_i\ge1$ denotes the multiplicity of
$a_i$, so that $\sum_{i=1}^m l_i=N$.  Recalling \eqref{cp} and using that
$g(t,\be,\alpha)$ has distribution $F(t,\be,\cdot)$, we obtain
\begin{align*}
\int_0^1 |\Pi_t^j(\bw,\alpha)-w_j|^2\,d\alpha
&=
\int_0^1 |g(t,\bw^j,\alpha)-w_j|^2\,d\alpha\\
&=
\int_{\R^3}
|u-w_j|^2
\frac{R_{t,\bar\mu_{\bw}}(du,\{w_j\})}
     {R_{t,\bar\mu_{\bw}}(\R^3,\{w_j\})}\\
&=
\frac{N}{\#\{l:\,w_l=w_j\}}
\int_{\R^3}
|u-w_j|^2
R_{t,\bar\mu_{\bw}}(du,\{w_j\})\\
&=
\frac{N}{l_{i(j)}}
\int_{\R^3}
|u-a_{i(j)}|^2
R_{t,\bar\mu_{\bw}}(du,\{a_{i(j)}\}),
\end{align*}
where $i(j)$ is the unique index such that $a_{i(j)}=w_j$.  Consequently,
\begin{align*}
\int_0^1
\frac{1}{N}
\sum_{j=1}^N|\Pi_t^j(\bw,\alpha)-w_j|^2\,d\alpha
&=
\sum_{j=1}^N
\frac1{l_{i(j)}}\int_{\R^3}
|u-a_{i(j)}|^2
R_{t,\bar\mu_{\bw}}(du,\{a_{i(j)}\})\\
&=
\sum_{i=1}^m
\int_{\R^3}
|u-a_i|^2
R_{t,\bar\mu_{\bw}}(du,\{a_i\})\\
&=
\int_{\R^3\times\R^3}
|u-v|^2
R_{t,\bar\mu_{\bw}}(du,dv)=
\mathcal W_2^2(f_t,\bar\mu_{\bw}),
\end{align*}
which proves (i).
\vip
To prove (ii), we argue similarly. For any
$\bw\in\R^{3N}$,
\begin{align*}
\frac1N
\sum_{j=1}^N
\int_0^1
\phi(\Pi_t^j(\bw,\alpha))\,d\alpha
&=
\sum_{j=1}^N
\frac1{l_{i(j)}}
\int_{\R^3}
\phi(u)R_{t,\bar\mu_{\bw}}(du,\{a_{i(j)}\}),\\
&=
\sum_{i=1}^m
\int_{\R^3}
\phi(u)\,
R_{t,\bar\mu_{\bw}}(du,\{a_i\})\\
&=
\int_{\R^3\times\R^3}
\phi(u)\,
R_{t,\bar\mu_{\bw}}(du,dv)=
\int_{\R^3}\phi(u)\,f_t(du),
\end{align*}
which establishes (ii).

\vip

Finally,  (iii) follows immediately by taking expectations in (ii) with  $\bw$ replaced by the exchangeable random vector $\bY$, and then using exchangeability to identify the expectation for each fixed $j$. Indeed, recalling the definition of $\Pi_t^j$, for any bounded measurable function $\phi:\R^3\to\R$, we have
\begin{align*}
\int_0^1 \phi\big(\Pi_t^j(\bY,\alpha)\big)\,d\alpha
=\int_0^1 \phi\big(g(t,\bY^j,\alpha)\big)\,d\alpha
=\int_{\R^3}\phi(u)\frac{R_{t,\bar\mu_{\bY}}(du,\{Y_j\})}{R_{t,\bar\mu_{\bY}}(\R^3,\{Y_j\})}.
\end{align*}
Since $\bY$ is exchangeable, the random variables $\int_{\rd}\phi(u)\frac{R_{t,\bar{\mu}_{\bY}}\big(du, \{Y_j\}\big)}{R_{t,\bar{\mu}_{\bY}}\big(\rd, \{Y_j\}\big)}$ has the same distribution for $j=1,\ldots,N$. 
Therefore,
\begin{align*}
\E\left[
\int_0^1 \phi\big(\Pi_t^j(\bY,\alpha)\big)\,d\alpha\right]=\frac{1}{N}\sum_{j=1}^N\E\left[
\int_0^1 \phi\big(\Pi_t^j(\bY,\alpha)\big)\,d\alpha\right] 
=\E\left[\frac{1}{N}\sum_{j=1}^N\int_0^1 \phi\big(\Pi_t^j(\bY,\alpha)\big)\,d\alpha\right].
\end{align*}
Applying point {\rm (ii)} with $\bw=\bY$, we obtain
$\frac{1}{N}\sum_{j=1}^N\int_0^1 \phi\big(\Pi_t^j(\bY,\alpha)\big)\,d\alpha=\int_{\R^3}\phi(u)\,f_t(du).$
Taking expectations yields
$\E\!\left[
\int_0^1 \phi\big(\Pi_t^j(\bY,\alpha)\big)\,d\alpha
\right]=\int_{\R^3}\phi(u)\,f_t(du),$
which proves (iii).

\end{proof}

\subsection{Construction of couplings}\label{sec: sec cou}
In this section, we develop a coupling argument inspired by \cite{MR3476628,MR3769742,MR3621429}. Related coupling constructions have been successfully applied to Kac particle systems associated with the Boltzmann and Landau equations for hard potentials and Maxwell molecules. Our setting, however, requires several additional ingredients due to the presence of moderately soft potentials..

\vip

To construct the coupling, consider a family of  i.i.d. $f_0$-distributed random variables $(V_0^i)_{i=1,...,N}$, we enlarge the Poisson random measures $O_{ij}$ (independent of  $(V_0^i)_{i=1,...,N}$), introduced in \eqref{mea: oij},  by adding an auxiliary variable $\alpha\in[0,1]$. More precisely,  let
$(M_{ij})_{1\le i\le j\le N}$ be a family of i.i.d. Poisson random measures with intensity $\frac{1}{N}ds d\alpha dz d\varphi$. For $1\le j<i\le N$, we impose the symmetry
\begin{align}\label{mea: mij}
    M_{ij}(ds,d\alpha,dz,d\varphi)=M_{ji}(ds,d\alpha,dz,d\varphi).
\end{align}

For $i=1,\dots,N$, define
\begin{align}\label{def: new  v}
V^{i}_t=V^i_0 + \sum_{j=1}^N\intot\int_0^1\int_0^\infty\int_0^{2\pi} 
c(V^{i}_\sm,V^{j}_\sm,z,\varphi) M_{ij}(ds,d\alpha,dz,d\varphi).
\end{align}
Since the jump coefficient $c$ does not depend on $\alpha$, the system $\bV=(V^1,...,V^N)$ defined in \eqref{def: new v} has the same law as the original particle system \eqref{mainsde}. Indeed, this follows by setting $O_{ij}(ds,dz,d\varphi)= M_{ij}(ds,[0,1],dz,d\varphi)$. Thus, the introduction of the auxiliary variable $\alpha$ does not change the dynamics in law, it only provides an additional source of randomness that will be used to construct the coupling.

\vip

We now construct a family of cutoff Boltzmann processes coupled with the particle system defined in \eqref{def: new v}. Inspired by \cite{MR3769742,MR3456347,lxz}, for $i=1,\dots,N$, we define
\begin{align}\label{def: wik}
     W^{i,K}_t=V^i_0 + \sum_{j=1}^N\intot\int_0^1\int_0^\infty\int_0^{2\pi} 
c_K(W^{i,K}_\sm,\Pi_s^{j,K}(\bW^K_\sm,\alpha),z,\varphi+\hat\varphi_{i,j,s}) M_{ij}(ds,d\alpha,dz,d\varphi),
\end{align}
where   
$\hat \varphi_{i,j,s}:=\varphi^1_{i,j,s}+\varphi^2_{i,j,s}+\varphi^3_{i,j,s}$  with
\begin{align*}
\varphi_{i,j,s}^1=&\varphi_{0}\big( V^i_\sm-V^j_\sm+\e Y(\alpha),
V^i_\sm-V^j_\sm\big),\\
\varphi^2_{i,j,s}=&\varphi_{0}\big(W_\sm^{i,K}-W_\sm^{j,K}+\e Y(\alpha),
V^i_\sm-V^j_\sm+\e Y(\alpha)\big),\\
\varphi^3_{i,j,s}=&\varphi_{0}\big(W_\sm^{i,K}-W_\sm^{j,K},W_\sm^{i,K}-W_\sm^{j,K}+\e Y(\alpha)\big),
\end{align*}
and $Y(\alpha)$ is an $\alpha$-random variable  with uniform distribution on $B(0,1)$ (independent of everything else), and $0<\epsilon<1$. For $i=j$, we can conventionally set $\hat \varphi_{i,i,s}=0$, since the self-collision coefficient vanishes.

\vip

The process $W^{i,K}$ is designed to follow the cutoff Boltzmann dynamics, while being driven by the same Poisson random measures as the particle system $\bV$. This common source of randomness   allows for a pathwise comparison between $W^{i,K}$ and $V^i$. In contrast to the couplings introduced in \cite{MR3769742,MR3621429,lxz}, we incorporate an additional auxiliary $\alpha$ random variable $Y$ to smooth the empirical measure of $\bW$. This regularization technique, which is particularly useful in the presence of singular collision kernels, is inspired by the ideas developed in \cite{MR3572320,MR3784497}.
\vip

A further difficulty arises from the fact that, unlike in the Nanbu particle system studied in \cite{MR3784497}, the family $(W^{i,K})_{1\le i\le N}$ is not independent. Indeed, the particles are coupled through the common Poisson random measures $(W^{i,K})_{1\le i\le N}$. To overcome this issue, we introduce below a decoupling procedure that enables us to compare compare $(W^{i,K})_{1\le i\le N}$ with a family of independent cutoff Boltzmann processes.

\vip

We introduce a new family of independent Poisson measures $(\tM_{ij}(ds,d\alpha,dz,d\varphi))_{1\le i, j\le N}$  on $[0,\infty)\times [0,1]\times[0,\infty)\times [0,2\pi)$ with intensity measure $\frac{1}{N}ds d\alpha dz d\varphi$ (independent of everything else) for $i=1,...,N$.
We now fix $1\le k\le N$  and divide $\{1,\dots,N\}$ into blocks
\[
I_m:=\{mk+1,\dots,\min\{(m+1)k,N\}\},
\qquad
m=0,\dots,\left\lceil \frac Nk\right\rceil-1 .
\]
For $i\in I_m$, define
\[
N_{ij}(ds,d\alpha,dz,d\varphi)
:=
\begin{cases}
\tM_{ij}(ds,d\alpha,dz,d\varphi),
& j\in I_m,\\
M_{ij}(ds,d\alpha,dz,d\varphi),
& j\notin I_m .
\end{cases}
\]
Then, for each fixed $m$, the family $\bigl(N_{ij}\bigr)_{i\in I_m,\ 1\leq j\leq N}$
consists of independent Poisson random measures with intensity $\frac{1}{N}ds d\alpha dz d\varphi$.

\vip

We now couple the Kac particle system with a family of cutoff Boltzmann
processes. Recall that $\Pi^{j,K}(\bW^K,\alpha)$ denotes the
measurable map introduced in Lemma~\ref{ww2p}, and $\bV$ denotes
the particle system defined in \eqref{mainsde}.   For $i=1,\dots,N$, define
\begin{align}\label{def: w tw}
 \tW^{i,K,k}_t=V^i_0 + \sum_{j=1}^N\intot\int_0^1\int_0^\infty\int_0^{2\pi} 
c_K(\tW^{i,K,k}_\sm,\Pi_s^{j,K}(\bW^K_\sm,\alpha),z,\varphi+\hat  \varphi_{i,j,s}&+\tilde \varphi_{i,j,s}) \notag\\
&
\times N_{ij}(ds,d\alpha,dz,d\varphi),
\end{align}
where   $\tilde{\varphi}_{i,j,s}:= \varphi_0(W^{i,K}_\sm-\Pi_s^{j,K}(\bW^K_\sm,\alpha),\tW^{i,K,k}_\sm-\Pi_s^{j,K}(\bW^K_\sm,\alpha)).$

\begin{rem}
Since $(V^i_0)_{i=1,\dots,N}$ and $(M_{ij})_{i,j=1,\dots,N}$ are exchangeable and  for each $i=1,...,N$, $W_t^{i,K}$  and $V_t^{i}$ are both unique in law, the family $\Big((W^{1,K}_t,V^{1}_t)_{t\geq 0},...,(W^{N,K}_t,V^{N}_t)_{t\geq 0}\Big)$ is exchangeable.
\end{rem}

\begin{lem}\label{coupling1}
Assume \eqref{pa-range}. Let
$f_0\in\mathcal P_q(\mathbb R^3)$ for $q>4$ and have finite Fisher information. Then, for every $i=1,\dots,N$ and every $t\geq0$,
\[
\mathcal L(W_t^{i,K})=\mathcal L(\tilde W_t^{i,K,k})=f_t^K .
\]
Moreover, for each block $I_m$, the processes
$
\bigl\{(\tilde W_t^{i,K,k})\bigr\}_{i\in I_m,}
$
are independent and identically distributed, with common one-time marginal
$f_t^K$ at time $t$.
\end{lem}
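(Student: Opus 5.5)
The plan is to identify each process as a cutoff Boltzmann process in the sense of Proposition \ref{Bp}, by computing the compensator of its jump measure conditionally on the past, and then to read off independence within a block from the independence of the driving Poisson measures.

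\textbf{Step 1 (marginal of $W^{i,K}$).} Fix $i$. Since the intensity of $M_{ij}$ is $\frac1N ds\,d\alpha\,dz\,d\varphi$ and the measures $(M_{ij})_{j}$ are independent, the sum $\sum_j M_{ij}$ acts on $W^{i,K}$ as a single Poisson measure on $[0,\infty)\times\{1,\dots,N\}\times[0,1]\times[0,\infty)\times[0,2\pi)$ with intensity $ds\,(\frac1N\sum_j\delta_j)\,d\alpha\,dz\,d\varphi$.

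For fixed $s$ and a fixed past, the map $\varphi\mapsto\varphi+\hat\varphi_{i,j,s}$ (mod $2\pi$) preserves Lebesgue measure on $[0,2\pi)$. Here $\hat\varphi_{i,j,s}$ depends on $\alpha$ only through $Y(\alpha)$ and on predictable quantities. So for any test function $\phi\in C^2_b$, Itô's formula gives
\[
\frac{d}{dt}\E[\phi(W^{i,K}_t)]=\E\Big[\frac1N\sum_{j}\int_0^1\int_0^\infty\int_0^{2\pi}\big(\phi(W^{i,K}_t+c_K(W^{i,K}_t,\Pi^{j,K}_t(\bW^K_t,\alpha),z,\varphi))-\phi(W^{i,K}_t)\big)d\varphi\,dz\,d\alpha\Big].
\]
By Lemma \ref{ww2p}(ii), conditionally on $\bW^K_t$ the variable $(j,\alpha)\mapsto\Pi^{j,K}_t(\bW^K_t,\alpha)$ under $\frac1N\sum_j\delta_j\otimes d\alpha$ has law $f^K_t$. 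Hence $\frac{d}{dt}\E[\phi(W^{i,K}_t)]=\E[\int \mathcal A\phi(W^{i,K}_t,v_*)f^K_t(dv_*)]$, where $\mathcal A$ is the operator appearing in \eqref{wBol}.

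This is a linear equation for $g_t=\mathcal L(W^{i,K}_t)$, namely the linearized cutoff Boltzmann equation $\partial_t g_t=Q_K(g_t,f^K_t)$ with a frozen background $f^K_t$. It has bounded rate, since the kernel is bounded by $K^2$, so its solution is unique by a standard Gronwall or total-variation argument. Since $f^K_t$ solves it with the same initial datum $f_0$, we conclude $g_t=f^K_t$.

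Before this step I must check that the system \eqref{def: wik}, with $\Pi^{j,K}$ defined via $f^K$, is well-posed. I will construct it recursively between jumps, which is possible because the total rate is at most $K^2$.

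\textbf{Step 2 (marginal of $\tW^{i,K,k}$).} The same computation applies. The measures $N_{ij}$, $j=1,\dots,N$, are independent Poisson measures with intensity $\frac1N ds\,d\alpha\,dz\,d\varphi$. The added angle $\tilde\varphi_{i,j,s}$ is predictable (up to $\alpha$), so it is again absorbed by the invariance of $d\varphi$. The coefficient still uses $\Pi^{j,K}_s(\bW^K_{s-},\alpha)$, whose $(j,\alpha)$-average is $f^K_s$ by Lemma \ref{ww2p}(ii).

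The point needing care is that $\bW^K$ is driven by the $M_{ij}$, some of which also drive $\tW^{i,K,k}$. I will therefore compute the compensator with respect to the filtration generated by everything: $V_0$, $(M_{ij})$, $(\tM_{ij})$ and $Y$. Relative to this filtration all the Poisson measures keep their intensities, and the integrands are predictable. The resulting linear equation then gives $\mathcal L(\tW^{i,K,k}_t)=f^K_t$.

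\textbf{Step 3 (independence within a block).} This is the main obstacle. For $i\neq i'\in I_m$, the process $\tW^{i,K,k}$ is driven by $V^i_0$, $(\tM_{ij})_{j\in I_m}$ and $(M_{ij})_{j\notin I_m}$, but it also depends on the whole external process $\bW^K$. So independence cannot be read off directly.

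I will first try the martingale characterization on the product. For $\Phi(x_{i})_{i\in I_m}=\prod_{i\in I_m}\phi_i(x_i)$, the processes in a block share no jumps, because $N_{ii'}=\tM_{ii'}$ for $i,i'\in I_m$ are independent families across distinct pairs. Note that $\tM_{ij}$ is not symmetrized, unlike $M_{ij}$, and this is where the construction is used: for $i\neq i'$, $N_{ij}$ and $N_{i'j'}$ share no atoms. Consequently the generator of the block process is the sum of single-particle generators. By the averaging of Step 1, each single-particle generator is conditionally the frozen linear operator with background $f^K_t$.

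Thus the joint law $G_t$ of $(\tW^{i,K,k}_t)_{i\in I_m}$ solves $\partial_tG_t=\sum_{i\in I_m}Q_K^{(i)}(G_t,f^K_t)$, with $G_0=f_0^{\otimes|I_m|}$. This equation has the unique bounded-rate solution $(f^K_t)^{\otimes|I_m|}$.

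The delicate point is that the averaging of the $\Pi^{j,K}$ coefficient over $(j,\alpha)$ happens conditionally on $\bW^K_{s-}$, while the factors $\phi_{i'}(\tW^{i',K,k}_{s-})$ are correlated with $\bW^K$. The compensator is nonetheless $\sum_i\prod_{i'\neq i}\phi_{i'}\cdot\int\mathcal A\phi_i(\cdot,v_*)f^K_s(dv_*)$, because the $(j,\alpha)$-average equals the deterministic $f^K_s$ for every value of $\bW^K$. The same holds for time marginals at several times, which gives i.i.d.\ one-time marginals. Full process-level independence would follow from the same argument applied to finite-dimensional distributions via the Markov property of the linear equation.
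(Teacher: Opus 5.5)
Your proposal is correct. For the two marginal identities you follow the paper's proof. The paper also merges $(M_{ij})_{1\le j\le N}$ into a single Poisson measure $Q_i$ on $[0,\infty)\times[0,N]\times[0,\infty)\times[0,2\pi)$. It then uses Lemma~\ref{ww2p}(ii) to see that $(j,\alpha)\mapsto\Pi^{j,K}_s(\bW^K_{s-},\alpha)$ pushes the normalized measure forward to $f^K_s$, and absorbs the angular shifts through the $2\pi$-periodicity of $c_K$ in $\varphi$. Its appeal to uniqueness of the cutoff Boltzmann dynamics is exactly your uniqueness for the linear equation with frozen background $f^K_t$.

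The routes diverge on independence within a block. The paper argues in one line that the $\tW^{i,K,k}$, $i\in I_m$, are built from independent initial data and independent families $(N_{ij})_j$, hence are independent. As you point out, that justification does not cover the actual construction. Each $\tW^{i,K,k}$ also depends, through $\Pi^{j,K}_s(\bW^K_{s-},\alpha)$ and the angles $\hat\varphi_{i,j,s}$, $\tilde\varphi_{i,j,s}$, on $\bV$ and $\bW^K$. These are driven by all the $M_{ij}$, including those that drive $\tW^{i',K,k}$.

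Your argument shows that this dependence drops out of the generator. It rests on three points: there are no common jumps inside a block, since $\tM$ is not symmetrized and, for $j\notin I_m$, $M_{ij}$ and $M_{i'j}$ correspond to distinct unordered pairs; the compensator is computed in the filtration generated by everything; and the $(j,\alpha)$-average is the deterministic measure $f^K_s$ for every configuration of $\bW^K_{s-}$. Hence the joint law solves a bounded-rate linear Kolmogorov equation whose unique solution is $(f^K_t)^{\otimes|I_m|}$. This costs a few more lines but actually proves what the paper asserts. The fixed-time product structure you obtain is all that is used later, in Corollary~\ref{cor: norm-bound} and in the Fournier--Guillin bound inside Proposition~\ref{thep}.

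Two small points of phrasing:
\begin{itemize}
\item Apply It\^o's formula to general $\Phi\in C^2_b(\R^{3|I_m|})$ rather than only to products. This is legitimate because block jumps are never simultaneous, and it gives the forward equation on a determining class directly.
\item For process-level independence, the clean statement is not an appeal to a Markov property. It is that the block process solves, with respect to its own filtration, the well-posed martingale problem for the bounded generator $\Phi\mapsto\sum_{i\in I_m}\int\mathcal A_i\Phi(\cdot,v_*)f^K_t(dv_*)$, where $\mathcal A_i$ is the operator following \eqref{wBol} acting on the $i$-th coordinate.
\end{itemize}
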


\begin{proof}
We first show that, for each fixed $i$, the process $W^{i,K}$ satisfying \eqref{def: wik} has the same
law as the cutoff Boltzmann process defined in \eqref{cbp}.

Define a Poisson random measure $Q_i(ds,d\xi,dz,d\varphi)$ on
$[0,\infty)\times[0,N]\times[0,\infty)\times[0,2\pi)$ by shifting the
$\alpha$-variable in each $M_{ij}$. More precisely, 
\[Q_i(A):=\sum_{j=1}^NM_{ij}\big(\{(s,\alpha,z,\varphi):(s,j-1+\alpha,z,\varphi)\in A\}\big),\]
for every Borel set $A\subset [0,\infty)\times[0,N]\times[0,\infty)\times[0,2\pi)$.
Then $ Q_i(ds,d\xi,dz,d\varphi)$ is a Poisson random measure  with intensity
$\frac{1}{N}ds  d\xi dz d\varphi$ on $[0,\infty)\times [0,N]\times[0,\infty)\times [0,2\pi)$, independent of $(V_0^i)_{i=1,\dots N}$. 
For $\xi\in[j-1,j)$, set $\Pi_s^K(\mathbf W_{s-}^K,\xi):=\Pi_s^{j,K}(\mathbf W_{s-}^K,\xi-j+1).
$
With this notation, $W^{i,K}$ can be rewritten as
\begin{align*}
W^{i,K}_t=&V^i_0 + \intot\int_0^N\int_0^\infty\int_0^{2\pi} 
c(W^{i,K}_\sm,\Pi_s^K(\bW^K_\sm,\xi),z,\varphi+\hat\varphi_{i,s}(\xi)) Q_i(ds,d\xi,dz,d\varphi),
\end{align*}
where $\hat\varphi_{i,s}(\xi)$ denotes the corresponding angular shift $\hat\varphi_{i,\lceil \xi \rceil,s}$ in \eqref{def: wik}.

Since the map $\Pi_s^K(\mathbf W_{s-}^K,\cdot)$ pushes forward the uniform
measure on $[0,N]$ onto $f_s^K$, and since $c_K$ is $2\pi$-periodic in
the angular variable $\varphi$, the angular shift does not change the compensator of the
driving Poisson random measure. It follows that $W^{i,K}$ satisfies the martingale
problem associated with the cutoff Boltzmann process \eqref{cbp}. By uniqueness
of the cutoff Boltzmann process, we conclude that  
$$
\mathcal L(W_t^{i,K})=f_t^K, \quad t\ge0.
$$

It remains to identify the law of $\tilde W^{i,K,k}$. For any fixed $i$, it is straightforward to verify that
\[
\mathcal L(\tilde W_t^{i,K,k})
=
\mathcal L(W_t^{i,K})
=
f_t^K,
\qquad t\ge0.
\]
Indeed, the family of driving Poisson random measures
$(N_{ij})_{1\le j\le N}$ has the same law and the same intensity measure as
$(M_{ij})_{1\le j\le N}$. Consequently, $\tilde W^{i,K,k}$ admits the same
martingale characterization as $W^{i,K}$, and thus has the same distribution.
\vip
Finally, let $I_m$ be a fixed block. By construction, for distinct
$i,i'\in I_m$, the families of Poisson random measures
$(N_{ij})_{1\le j\le N}$ and $(N_{i'j})_{1\le j\le N}$ are independent.
Since the processes $\bigl(\tilde W^{i,K,k}\bigr)_{i\in I_m}$ are built from
independent initial conditions and independent driving Poisson random measures,
they are mutually independent. Together with the identification of their
common marginal law, this completes the proof.

 \end{proof}

\begin{rem}\label{rem: exchang}
Fix a block $I_m$ and an index $i\in I_m$. By construction, for every
$l\notin I_m$, the family of noises $(M_{lj})_{1\le j\le N}$ shares exactly one
common noise with $(N_{ij})_{1\le j\le N}$. In contrast, if $l\in I_m$ and
$l\ne i$, then these two families have no common noise.

The exchangeability properties established above yield the following  exchangeability properties of the conditional law. For any
$0\le s\le T$, if  $l_1,l_2\notin I_m$ 
, then
\begin{align*}
\cL(W^{i,K}_s,\tW^{i,K,k}_s,\Pi_s^{l_1,K}(\bW^K_s,\alpha) ) = \cL(W^{i,K}_s,\tW^{i,K,k}_s,\Pi_s^{l_2,K}(\bW^K_s,\alpha) ).
\end{align*}
 Moreover,
\begin{align*}
&\cL\Big(
\Pi_s^{l_1,K}(\bW^K_s,\alpha)
\,\Big|\,
\sigma\big(\{W^{i,K}_s,\tW^{i,K,k}_s\}_{i\in I_m}\big)
\Big) \\
&\hspace{2cm}
=
\cL\Big(
\Pi_s^{l_2,K}(\bW^K_s,\alpha)
\,\Big|\,
\sigma\big(\{W^{i,K}_s,\tW^{i,K,k}_s\}_{i\in I_m}\big)
\Big).
\end{align*}
Similarly, if $l_1,l_2\in I_m\setminus\{i\}$, then
\begin{align*}
&\cL\Big(
\Pi_s^{l_1,K}(\bW^K_s,\alpha)
\,\Big|\,
\sigma\big(W^{i,K}_s,\tW^{i,K,k}_s\big)
\Big)\\
&\hspace{2cm}
=
\cL\Big(
\Pi_s^{l_2,K}(\bW^K_s,\alpha)
\,\Big|\,
\sigma\big(W^{i,K}_s,\tW^{i,K,k}_s\big)
\Big).
\end{align*}
Finally, fix $i_1,i_2\in I_m$ and $l\notin I_m$. Then, for any nonnegative symmetric measurable function
$\Psi:(\R^3)^k\to \R_{+}$ and any nonnegative measurable function
$F:(\R^3)^3\to \R_{+}$, we have
\begin{align*}
&\E\Big[
F\big(
V_s^{l},
\Pi_s^{l,K}(\bW^K_s,\alpha),
\tW^{i_1,K,k}_s
\big)
\Psi\big(\{\tW^{j,K,k}_s\}_{j\in I_m}\big)
\Big]\\
&\hspace{2cm}
=
\E\Big[
F\big(
V_s^{l},
\Pi_s^{l,K}(\bW^K_s,\alpha),
\tW^{i_2,K,k}_s
\big)
\Psi\big(\{\tW^{j,K,k}_s\}_{j\in I_m}\big)
\Big].
\end{align*}

\end{rem}

\begin{lem}\label{thele}
Assume \eqref{con} and \eqref{pa-range}. Let $N\ge1$, $1\leq k\leq N$, and $K\ge1$. Consider $(W_t^{i,K})_{t\in[0,T]}, i=1,...,N$ solving \eqref{def: wik} and $(\tilde W_t^{i,K,k})_{t\in[0,T]},  i=1,...,N$ solving \eqref{def: w tw}. Then, for every $T>0$, there exists a constant $C_{T,f_0}>0,$ independent of $N$, $k$, and $K$, such that for all $i=1,\dots,N$ and all $t\in[0,T]$,
\begin{align}
\E\left[
\left|W_t^{i,K}-\tilde W_t^{i,K,k}\right|^2
\right]
\leq&
C_{T,f_0}\left(
\frac{k}{N}
+
\frac{1}{K^{1/\nu-1}}
\right),\label{decoup}\\
\E\left[
\left|W_t^{i,K}-\tilde W_t^{i,K,k}\right|
\right]
\leq&
C_{T,f_0}\left(
\frac{k}{N}
+
\frac{1}{K^{1/\nu-1}}
\right).\label{decoup1}
\end{align}
\end{lem}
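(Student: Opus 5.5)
We compare $W^{i,K}$ and $\tW^{i,K,k}$ through Itô's formula for $|W^{i,K}_t-\tW^{i,K,k}_t|^2$. Both processes use the same jump rule $c_K(\cdot,\Pi_s^{j,K}(\bW^K_\sm,\alpha),z,\varphi+\hat\varphi_{i,j,s}+\cdot)$. They differ only in the driving noise when $j\in I_m$: $W^{i,K}$ is driven by $M_{ij}$, while $\tW^{i,K,k}$ is driven by $\tM_{ij}$. For $j\notin I_m$, both use the same $M_{ij}$, and the angular shift $\tilde\varphi_{i,j,s}$ is exactly the Tanaka shift \eqref{Gamma} between the relative velocities $W^{i,K}-\Pi^{j,K}$ and $\tW^{i,K,k}-\Pi^{j,K}$. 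Since $\varphi\mapsto c_K$ is $2\pi$-periodic, the common shift $\hat\varphi_{i,j,s}$ can be absorbed.

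\emph{Common-noise part.} Consider the collisions with $j\notin I_m$. Set $v=W^{i,K}_\sm$, $\tv=\tW^{i,K,k}_\sm$ and $v_*=\tv_*=\Pi_s^{j,K}(\bW^K_\sm,\alpha)$, and apply \eqref{ineq: KKp22cut1} and \eqref{ineq: KKp22cut} of Lemma \ref{lem: 3ccut}. The compensated drift is then bounded by
\[
\frac{C}{N}\sum_{j\notin I_m}\int_0^1\E\Big[|W^{i,K}_s-\tW^{i,K,k}_s|^2\,|\tW^{i,K,k}_s-\Pi_s^{j,K}(\bW^K_s,\alpha)|^\gamma\Big]d\alpha+CK^{1-1/\nu}\big(1+\text{second moments}\big).
\]
The second-moment term is bounded by conservation of energy for $f^K$ (Lemma \ref{lem: prop fk} setting) together with Lemma \ref{ww2p}(iii).

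The main obstacle is the factor $|\cdot|^\gamma$, which multiplies a random quantity not independent of the difference. I would first try to bound it in a Grönwall-compatible way. Average over $j\notin I_m$ and use Remark \ref{rem: exchang} to replace the sum over $j$ by $N^{-1}\sum_{j}$, adding the harmless terms with $j\in I_m$. Then Lemma \ref{ww2p}(ii) gives, conditionally on $\bW^K_s$,
\[
\frac1N\sum_j\int_0^1|x-\Pi_s^{j,K}(\bW^K_s,\alpha)|^\gamma d\alpha=\int|x-u|^\gamma f_s^K(du)\le 1+I_1(f_0)
\]
by Lemma \ref{lem: prop fk}(i), for any $x$ (in particular $x=\tW^{i,K,k}_s$). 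This gives $C_{f_0}\E|W^{i,K}_s-\tW^{i,K,k}_s|^2$. The key point is that the sum over $j$ together with the $\alpha$-integral reconstructs $f^K_s$ pointwise in $\omega$, so no independence is needed.

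\emph{Uncommon-noise part.} For $j\in I_m$, there are $|I_m|\le k$ indices. The jumps of $W^{i,K}$ and of $\tW^{i,K,k}$ occur at independent times, so I bound them crudely: $|x+c-y|^2-|x-y|^2\le 2|x-y||c|+|c|^2$ for each. Using Lemma \ref{lem:c}(ii), each process contributes
\[
\frac{C}{N}\sum_{j\in I_m}\int_0^1\E\Big[|W^{i,K}_s-\tW^{i,K,k}_s|\,|X_s-\Pi^{j,K}_s|^{1+\gamma}+|X_s-\Pi^{j,K}_s|^{2+\gamma}\Big]d\alpha,
\]
with $X_s$ equal to $W^{i,K}_s$ or $\tW^{i,K,k}_s$. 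By Young's inequality, $|a|\,|b|^{1+\gamma}\le |a|^2+|b|^{2+2\gamma}$, and the moment bounds (marginals $f^K_s$, Lemma \ref{ww2p}(iii), $m_2$ conserved), this is at most
\[
C\frac kN\Big(1+\E|W^{i,K}_s-\tW^{i,K,k}_s|^2\Big).
\]
A cleaner route is to drop the $|a|^2$ contribution, since the factor $k/N\le 1$ only multiplies a Grönwall term.

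Collecting both parts,
\[
\E|W^{i,K}_t-\tW^{i,K,k}_t|^2\le C_{f_0}\int_0^t\E|W^{i,K}_s-\tW^{i,K,k}_s|^2ds+C_{T,f_0}\Big(\frac kN+K^{1-1/\nu}\Big),
\]
and Grönwall's lemma gives \eqref{decoup}. For \eqref{decoup1}, I would repeat the argument with Itô's formula applied to $|W^{i,K}_t-\tW^{i,K,k}_t|$ (or a smoothed version $\sqrt{\e^2+|\cdot|^2}$). The common-noise part is handled with \eqref{ineq: KKp22cut3}, which again gives a term of the form $|W^{i,K}_s-\tW^{i,K,k}_s|$ times a $\gamma$-singular factor, controlled by the same $f^K$-averaging, plus $K^{1-1/\nu}$. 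The uncommon part is bounded via $\big||x+c-y|-|x-y|\big|\le|c|$ and Lemma \ref{lem:c}(ii) by $Ck/N$. Grönwall's lemma then concludes.
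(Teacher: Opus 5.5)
Your proposal is correct and follows essentially the same route as the paper. Both apply It\^o's formula to $|W^{i,K}_t-\tW^{i,K,k}_t|^2$ and to $|W^{i,K}_t-\tW^{i,K,k}_t|$, bound the at most $k$ uncommon-noise indices crudely via Lemma \ref{lem:c}(ii) by $Ck/N$, control the common-noise indices with \eqref{ineq: KKp22cut1}, \eqref{ineq: KKp22cut} (resp. \eqref{ineq: KKp22cut3}) after extending the $j$-sum and removing the $\gamma$-singular factor through Lemma \ref{ww2p}(ii) and Lemma \ref{lem: prop fk}(i), and conclude by Gr\"onwall. The differences are cosmetic: the appeal to Remark \ref{rem: exchang} is unnecessary, since positivity alone lets you add the $j\in I_m$ terms, and you absorb the Young term $|W-\tW|^2$ into Gr\"onwall where the paper bounds it by second moments.
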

\begin{rem}
It is natural that the estimates in \eqref{decoup} and \eqref{decoup1} have the same order of error $O(k/N)$. Indeed, this is consistent with the elementary behavior of a Poisson process with small intensity parameter. If $(N_t)_{t\ge 0}$ is a Poisson process with rate $1$, then both $\E[N_t]$ and $\E[|N_t|^2]$ are of order $O(t)$ as $t\to 0$.
\end{rem}

\begin{preuve} 
By exchangeability, it suffices to establish the desired estimate for $i=1$. For $j=1,\dots,N$, denote 
\begin{align*}
c_{K}^{1j}
&:=
c_K\Bigl(
W_{s-}^{1,K},
\Pi_s^{j,K}(\mathbf W_{s-}^K,\alpha),
z,
\varphi+\hat\varphi_{1,j,s}
\Bigr),
\\
\tilde c_{K}^{1j}
&:=
c_K\Bigl(
\tilde W_{s-}^{1,K,k},
\Pi_s^{j,K}(\mathbf W_{s-}^K,\alpha),
z,
\varphi+\hat\varphi_{1,j,s}
+\tilde\varphi_{1,j,s}
\Bigr).
\end{align*}
Since $N_{1j}=\tilde M_{1j}$ for $1\leq j\leq k$, and
$N_{1j}=M_{1j}$ for $j>k$, we have
\begin{align*}
&W_t^{1,K}-\tW_t^{1,K,k}\\
&=\sum_{j=1}^k\intot\int_0^1\int_0^\infty\int_0^{2\pi} 
c_K^{1j} M_{1j}(ds,d\alpha,dz,d\varphi)-\sum_{j=1}^k\intot\int_0^1\int_0^\infty\int_0^{2\pi} 
\tilde{c}^{1j}_K \tM_{1j}(ds,d\alpha,dz,d\varphi)\\
&\hskip2cm+\sum_{j=k+1}^N\intot\int_0^1\int_0^\infty\int_0^{2\pi} 
(c_K^{1j}-\tilde{c}_K^{1j}) M_{1j}(ds,d\alpha,dz,d\varphi).
\end{align*}
Applying It\^o's formula for jump processes to $|W_t^{1,K}-\tilde W_t^{1,K,k}|^2$ , we have 
\begin{align*}
    |W_t^{1,K}-\tW_t^{1,K,k}|^2=J^{1}_t+J^{2}_t+J^{3}_t,
\end{align*}
where
\begin{align*}
   & J^{1}_t:=\sum_{j=1}^k\intot\int_0^1\int_0^\infty\int_0^{2\pi} |W_{s-}^{1,K}-\tW_{s-}^{1,K,k}+c^{1j}_K|^2-|W_{s-}^{1,K}-\tW_{s-}^{1,K,k}|^2M_{1j}(ds,d\alpha,dz,d\varphi),\\
     &J^{2}_t:=\sum_{j=1}^k\intot\int_0^1\int_0^\infty\int_0^{2\pi} |W_{s-}^{1,K}-\tW_s^{1,K,k}-\tilde{c}^{1j}_K|^2-|W_s^{1,K}-\tW_{s-}^{1,K,k}|^2 \tilde M_{1j}(ds,d\alpha,dz,d\varphi),\\
     &J^{3}_t:=\sum_{j=k+1}^N\intot\int_0^1\int_0^\infty\int_0^{2\pi} |W_{s-}^{1,K}-\tW_{s-}^{1,K,k}+c^{1j}_K-\tilde{c}^{1j}_K|^2-|W_{s-}^{1,K}-\tW_{s-}^{1,K,k}|^2M_{1j}(ds,d\alpha,dz,d\varphi).
\end{align*}
We first estimate $\E[J_t^1]$. Taking expectations, we have
\[\E[J_t^1]=\frac1N\sum_{j=1}^k\int_0^t\int_0^1\int_0^\infty\int_0^{2\pi}\E\Big[2c_K^{1j}\cdot\big(W_s^{1,K}-\tilde W_s^{1,K,k}\big)+|c_K^{1j}|^2\Big]\,d\varphi\,dz\,d\alpha\,ds. \]
Applying Lemma~\ref{lem:c}-(ii) and Young's inequality, we obtain 
\begin{align*}
\E[J_t^1]
&\le\frac{C}{N}\sum_{j=1}^k\intot\int_0^1 \E\Big[|W_s^{1,K}-\tW_s^{1,K,k}||W_{s}^{1,K}-
\Pi_s^{j,K}(\mathbf W_{s}^K,\alpha)|^{1+\gamma}\\
&\hskip4cm+|W_{s}^{1,K}-
\Pi_s^{j,K}(\mathbf W_{s}^K,\alpha)|^{2+\gamma}\Big]ds d\alpha\\
&\le \frac{C}{N}\sum_{j=1}^k\int_0^T\int_0^1 \E\Big[1+|W_s^{1,K}|^2+|\tW_s^{1,K,k}|^2+|
\Pi_s^{j,K}(\mathbf W_{s}^K,\alpha)|^{2}\Big]d\alpha ds .
\end{align*}
Since Lemma \ref{ww2p}-(iii),  and using that $\ca{L}(W_s^{1,K})=\ca{L}(\tilde W_s^{1,K,k})=f_s^K$ and $\sup_{s\in[0,T]}m_2(f_s^K)<\infty$,  
we have 
\[\int_0^1 \E\Big[1+|W_s^{1,K}|^2+|\tW_s^{1,K,k}|^2+|
\Pi_s^{j,K}(\mathbf W_{s}^K,\alpha)|^{2}\Big]d\alpha<C(1+m_2(f_s^K))<\infty.\]
Therefore, $\E[J_t^1]\le \frac{CT k}{N}$.
The same argument also yields $\sup_{0\le t\le T}   \E[J_t^2]\le \frac{CT k}{N}$.

It remains to control $\E[J_t^3]$, where both processes are driven by the same
Poisson random measures. Taking expectations gives
\begin{align*}
\mathbb E[J_t^3]
&=
\frac{1}{N}\sum_{j=k+1}^N
\int_0^t\int_0^1\int_0^\infty\int_0^{2\pi}
\mathbb E\left[
2(W_s^{1,K}-\tW_s^{1,K,k})\cdot(c_K^{1j}-\tilde c_K^{1j})
+
|c_K^{1j}-\tilde c_K^{1j}|^2
\right]
\,ds\,d\alpha\,dz\,d\varphi .
\end{align*}
We first notice that by Lemma \ref{ww2p}-(ii) and  Lemma \ref{lem: prop fk}-(i),
\begin{align}\label{Est:5.1+4.2}
    \int_0^1\frac{1}{N}\sum_{j=1}^N|\tW_s^{1,K,k}-\Pi_s^{j,K}(\bW^K_s,\alpha)|^\gamma d\alpha = \int_{\R^3}|\tW_s^{1,K,k}-u|^\gamma f_t^K(du)\le 1+I_1(f_0).
\end{align}
Using \eqref{ineq: KKp22cut}  in Lemma~\ref{lem: 3ccut} with $v=W_s^{1,K}$, $\tv=\tW_s^{1,K,k}$, $v_*=\tv_*=\Pi_s^{j,K}(\bW^K_s,\alpha)$, together with $\sup_{s\in[0,T]}m_2(f_s^K)<\infty$, we deduce
\begin{align*}
    &\frac{1}{N}\sum_{j=k+1}^N\int_0^1\int_0^\infty\int_0^{2\pi}
\mathbb E\left[
|c_K^{1j}-\tilde c_K^{1j}|^2
\right]
\,d\alpha\,dz\,d\varphi\\
\le& C\E\Big[|W_s^{1,K}-\tW_s^{1,K,k}|^2 \int_0^1\frac{1}{N}\sum_{j=1}^N|\tW_s^{1,K,k}-\Pi_s^{j,K}(\bW^K_s,\alpha)|^\gamma d\alpha\Big]\\
&+\frac{C}{K^{1/\nu-1}}\E\left[1+|W_s^{1,K}|^{2}+|\tW_s^{1,K,k}|^{2}+\int_0^1\frac{1}{N}\sum_{j=1}^N|\Pi_s^{j,K}(\bW^K_s,\alpha)|^2 d\alpha\right]\\
\le& C_{f_0}\E[|W_s^{1,K}-\tW_s^{1,K,k}|^2]+\frac{C}{K^{1/\nu-1}}.
\end{align*}
Proceeding as in the previous step and applying \eqref{ineq: KKp22cut1} from Lemma~\ref{lem: 3ccut}  together with  \eqref{Est:5.1+4.2}, we deduce that
\begin{align*}
& \frac{1}{N}\sum_{j=k+1}^N\mathbb E\left[
|W_s^{1,K}-\tW_s^{1,K,k}|\Big|\int_0^1\int_0^\infty\int_0^{2\pi}(c_K^{1j}-\tilde c_K^{1j})\,d\alpha\,dz\,d\varphi\Big|
\right]\\
\le& C\mathbb E\left[
|W_s^{1,K}-\tW_s^{1,K,k}|^2\Big|\frac{1}{N}\sum_{j=1}^N\int_0^1 |\tW_s^{1,K,k}-\Pi_s^{j,K}(\bW^K_\sm,\alpha)|^\gamma\,d\alpha\Big|
\right]\\
\le& C_{f_0}\mathbb E\left[
|W_s^{1,K}-\tW_s^{1,K,k}|^2\right].
\end{align*}
Therefore,
\begin{align*}
\frac{1}{N}\sum_{j=k+1}^N\int_0^1\int_0^\infty\int_0^{2\pi}
\mathbb E\left[
(W_s^{1,K}-\tW_s^{1,K,k})\cdot(c_K^{1j}-\tilde c_K^{1j})
\right]\,d\alpha\,dz\,d\varphi 
\le&C_{f_0}\mathbb E\left[
|W_s^{1,K}-\tW_s^{1,K,k}|^2\right].
\end{align*}
Combining the above estimates,  we obtain
\begin{align*}
    \E[J_t^3]
    \le& C_{f_0}\int_0^t \E[|W_s^{1,K}-\tW_s^{1,K,k}|^2] ds +\frac{CT}{K^{1/\nu-1}}.
\end{align*}
Following from Gr\"onwall's inequality, we obtain  for all $t\in[0,T]$,
\begin{align*}
\sup_{0\le t\le T}\E[|W_t^{1,K}-\tW_t^{1,K,k}|^2]\le C_{T,f_0} \Big(\frac{k}{N}+\frac{1}{K^{1/\nu-1}}\Big).    \end{align*}
To prove \eqref{decoup1}, we proceed as above and decompose
\begin{align*}
    |W_t^{1,K}-\tW_t^{1,K,k}|=L^{1}_t+L^{2}_t+L^{3}_t,
\end{align*}
where
\begin{align*}
   & L^{1}_t:=\sum_{j=1}^k\intot\int_0^1\int_0^\infty\int_0^{2\pi} |W_{s-}^{1,K}-\tW_{s-}^{1,K,k}+c^{1j}_K|-|W_{s-}^{1,K}-\tW_{s-}^{1,K,k}|M_{1j}(ds,d\alpha,dz,d\varphi),\\
     &L^{2}_t:=\sum_{j=1}^k\intot\int_0^1\int_0^\infty\int_0^{2\pi} |W_{s-}^{1,K}-\tW_s^{1,K,k}-\tilde{c}^{1j}_K|-|W_\sm^{1,K}-\tW_{s-}^{1,K,k}|\tilde M_{1j}(ds,d\alpha,dz,d\varphi),\\
     &L^{3}_t:=\sum_{j=k+1}^N\intot\int_0^1\int_0^\infty\int_0^{2\pi} |W_{s-}^{1,K}-\tW_{s-}^{1,K,k}+c^{1j}_K-\tilde{c}^{1j}_K|-|W_{s-}^{1,K}-\tW_{s-}^{1,K,k}|M_{1j}(ds,d\alpha,dz,d\varphi).
\end{align*}
Taking expectations and applying Lemma~\ref{lem:c}-(ii), we obtain
\begin{align*}
    \E[L_t^1]& \le  \frac{1}{N}\sum_{j=1}^k\intot\int_0^1\int_0^\infty\int_0^{2\pi} \E[|c^{1j}_K|]ds d\alpha dz d\varphi\\
  &\le \frac{C }{N}\sum_{j=1}^k\int_0^T\E\Big[\int_0^1|W_s^{1,K}-\Pi_s^{j,K}(\bW^K_s,\alpha)|^{1+\gamma}d\alpha\Big] ds.
\end{align*}
Proceeding exactly as in the estimate of $\E[J_t^1]$,  we deduce that
\[
  \E[L_t^1] \le \frac{C}{N}\sum_{j=1}^k\int_0^T\int_0^1 \E\Big[1+|W_s^{1,K}|^2+|
\Pi_s^{j,K}(\mathbf W_{s}^K,\alpha)|^{2}\Big]ds d\alpha\le \frac{CTk}{N}. \]
The same argument yields $\sup_{0\le t\le T}   \E[L_t^2]\le \frac{CT k}{N}$. 
To estimate $\E[L_t^3]$, we have
\begin{align*}
\mathbb E[L_t^3]
&\le
\frac{1}{N}\sum_{j=k+1}^N
\int_0^t\int_0^1\int_0^\infty\int_0^{2\pi}
\mathbb E\left[
|c_K^{1j}-\tilde c_K^{1j}|
\right]
\,ds\,d\alpha\,dz\,d\varphi .
\end{align*}
Using inequality \eqref{ineq: KKp22cut3}  from Lemma~\ref{lem: 3ccut}, together with the uniform moment estimates for $f_s^K$ in Lemma \ref{lem: prop fk}-(i), we deduce
\begin{align*}
    \E[L_t^3]
    \le& C_{f_0}\int_0^t\E[|W_s^{1,K}-\tW_s^{1,K,k}|] ds +\frac{C}{K^{1/\nu-1}}\int_0^T\int_0^1\E\Big[1+|W_s^{1,K}|^{2}\\
    &\hskip4.5cm+|\tW_{s}^{1,K,k}|^{2}+\frac{1}{N}\sum_{j=1}^N|
\Pi_s^{j,K}(\mathbf W_{s}^K,\alpha)|^{2}\Big]ds d\alpha\\
    \le& C_{f_0}\int_0^t\E[|W_s^{i,K}-\tW_s^{i,K,k}|] ds +\frac{CT}{K^{1/\nu-1}}.
\end{align*}
Gr\"onwall's inequality then gives, for all $t\in[0,T]$,

\begin{align*}
\sup_{0\le t\le T}\E[|W_t^{1,K}-\tW_t^{1,K,k}|]\le C_{T,f_0} \Big(\frac{k}{N}+\frac{1}{K^{1/\nu-1}}\Big).    \end{align*}
\end{preuve}

\subsection{\texorpdfstring{Large-$N$ limit of many Boltzmann processes}{Large-N limit of many Boltzmann processes}}
We estimate $\cW_2^2\left(f_t^K,\frac{1}{N}\sum_{i=1}^N \delta_{W^{i,K}_t} \right)$, where $(W^{i,K}_t)_{i=1,\dots,N,\ t\ge0}$ is defined in \eqref{def: wik}. By Lemma \ref{ww2p}-(i) and symmetry, for $t\ge0$,
\begin{align*}
  \E \left[  \int_0^1 \frac{1}{N}\sum_{j=1}^N|\Pi_t^{j,K}(\bW^K_t,\alpha)-W^{j,K}_t|^2 d\alpha \right]=\E\left[\cW_2^2\Big(f_t^K,\frac{1}{N}\sum_{i=1}^N \delta_{W^{i,K}_t} \Big)\right].
\end{align*}

Fournier and Guillin \cite[Theorem 1]{MR3383341} proved that, for a probability measure $f$ on $\mathbb{R}^3$ and a sequence of $f$-distributed, $\mathbb{R}^3$-valued i.i.d. random variables $(X_1,\cdots,X_N)$,
\[\E\Big[\cW_2^2\Big(f,\frac{1}{N}\sum_{i=1}^N\delta_{X_i}\Big)\Big]\le\frac{C_p(\int |v|^pf(dv))^{2/p}}{N^{1/2}},\]
for any $p>4$ with $C_p>0$.  
We thus have to prove the following result making use of the second coupling technique mentioned  previously because the family of processes  $(W_t^{i,K})_{t\ge0}$ are not independent.

\begin{prop}\label{thep}
Assume \eqref{con} and \eqref{pa-range}. Let  $K\ge1$. Assume that
$f_0\in\cP_q(\R^3)$ for some $q>4$ and that $f_0$ has finite Fisher
information. Let $(f_t^K)_{t\ge0}$ be the unique weak solution to the cutoff
Boltzmann equation \eqref{Bol} associated with the collision kernel $B_K$
defined in \eqref{Def: kernel cut}. Then, for every $t\in[0,T]$,
\beq\label{spe}
\E\left[\cW_2^2\Big(f_t^K,\frac{1}{N}\sum_{i=1}^N \delta_{W^{i,K}_t} \Big)\right]\le C\Big(N^{-1/3}+\frac{1}{K^{1/\nu-1}}\Big).
\eeq
\end{prop}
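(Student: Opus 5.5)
The plan is to compare the empirical measure of the dependent family $(W^{i,K}_t)_{i}$ with the empirical measure of the decoupled processes $(\tW^{i,K,k}_t)_i$, and then use the blockwise independence from Lemma \ref{coupling1} to apply the Fournier--Guillin bound on each block. Fix $t\in[0,T]$ and a block size $1\le k\le N$ to be optimized. The triangle inequality and the trivial coupling $\delta_{W^{i,K}_t}\leftrightarrow\delta_{\tW^{i,K,k}_t}$ give
\[
\cW_2^2\Big(f_t^K,\frac1N\sum_{i=1}^N\delta_{W^{i,K}_t}\Big)\le 2\,\cW_2^2\Big(f_t^K,\frac1N\sum_{i=1}^N\delta_{\tW^{i,K,k}_t}\Big)+\frac2N\sum_{i=1}^N|W^{i,K}_t-\tW^{i,K,k}_t|^2 .
\]
By \eqref{decoup} in Lemma \ref{thele}, the expectation of the second term is at most $C_{T,f_0}(k/N+K^{1-1/\nu})$.

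For the first term, write the empirical measure as the convex combination over the blocks, $\frac1N\sum_i\delta_{\tW^{i}}=\sum_m\frac{|I_m|}{N}\mu_m$, where $\mu_m:=|I_m|^{-1}\sum_{i\in I_m}\delta_{\tW^{i,K,k}_t}$. Convexity of $\cW_2^2$ under mixtures (glue optimal couplings blockwise) gives $\cW_2^2(f_t^K,\frac1N\sum_i\delta_{\tW^i})\le\sum_m\frac{|I_m|}{N}\cW_2^2(f_t^K,\mu_m)$. Within each block, Lemma \ref{coupling1} says the $\tW^{i,K,k}_t$, $i\in I_m$, are i.i.d. with law $f_t^K$. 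Fournier--Guillin \cite[Theorem 1]{MR3383341} with some $p\in(4,q]$ then yields $\E[\cW_2^2(f_t^K,\mu_m)]\le C_p\,m_p(f_t^K)^{2/p}|I_m|^{-1/2}$. Lemma \ref{lem: prop mom} bounds $m_p(f_t^K)$ uniformly in $K$ and $t\le T$.

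The last block may be short. To handle it, choose $k$ dividing $N$ up to rounding, or bound the short block's contribution crudely by $\frac{|I_m|}{N}\cdot C(1+m_2)\le Ck/N$. Summing, we get
\[
\E\Big[\cW_2^2\Big(f_t^K,\frac1N\sum_i\delta_{W^{i,K}_t}\Big)\Big]\le C\big(k^{-1/2}+k/N+K^{1-1/\nu}\big).
\]
Optimizing with $k\approx N^{2/3}$ gives $k^{-1/2}=k/N=N^{-1/3}$, which is exactly \eqref{spe}.

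The main point to check carefully is the independence claim within a block. The processes $\tW^{i,K,k}$ still depend on the common input $\Pi^{j,K}_s(\bW^K_{s-},\alpha)$ and on the noises $M_{ij}$ for $j\notin I_m$. So one must verify that Lemma \ref{coupling1}'s assertion genuinely yields i.i.d. one-time marginals, so that Fournier--Guillin applies. If that were doubtful, I would instead condition on the outside-block information. Given the outside environment, however, the conditional laws are not $f_t^K$, so the cleanest route is to rely directly on Lemma \ref{coupling1} as stated, which is permitted. The remaining steps are routine bookkeeping of constants, which depend only on $T$, $q$, $I_1(f_0)$ and $m_q(f_0)$ and not on $N$, $k$, $K$.
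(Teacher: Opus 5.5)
Your argument is correct and is essentially the paper's proof. Both pass to the decoupled processes via Lemma~\ref{thele}, use the blockwise i.i.d.\ property of Lemma~\ref{coupling1} together with Fournier--Guillin on blocks of size $k$, and take $k\sim N^{2/3}$. The only difference is how you reduce to blocks: the paper applies the exchangeability Lemma~\ref{line} to $(W^{i,K}_t)_i$ and then couples only the first block, whereas you couple all $N$ particles and average over blocks by convexity of $\cW_2^2$, which amounts to inlining that lemma's proof.
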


In order to prove Proposition  \ref{thep},  let's recall the  following lemma which comes from \cite[Lemma 21]{MR3621429}, see also \cite[Lemma 5.6]{lxz}.
\begin{lem}\label{line}
Let $N\ge2$, $\mu\in\cP_2(\rd)$. Let $W_1,...,W_N$ be a family of exchangeable $\rd$-valued random variables. Then for any $1\le k_0\le N$, we set $\mu_{k_0}:=k_0^{-1}\sum_{i=1}^{k_0} \delta_{W_i}$,
\[\E\left[\cW_2^2(\mu_N, \mu)\right]\le \E\left[\cW_2^2(\mu_{k_0}, \mu)\right]+\frac{l}{N}\Big(2m_2(\mu)+2\E[|W_1|^2]\Big),\]
where $r, l$  are the unique non-negative integers satisfying $N=k_0r+l$ and $l\le k_0-1$.
\end{lem}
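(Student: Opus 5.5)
The statement to prove is Lemma \ref{line}. The plan is to split the $N$ particles into $r$ disjoint blocks of size $k_0$ plus a remainder of size $l$. We then use the joint convexity of $\cW_2^2$ under mixtures, together with exchangeability, to reduce everything to a single block.

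\textbf{Decomposition of $\mu_N$.} Write $N=k_0r+l$. For $m=0,\dots,r-1$ let $\mu^{(m)}:=k_0^{-1}\sum_{i=mk_0+1}^{(m+1)k_0}\delta_{W_i}$, and let $\nu_l:=l^{-1}\sum_{i=k_0r+1}^{N}\delta_{W_i}$ (only if $l\ge1$). Then
\[
\mu_N=\frac{k_0}{N}\sum_{m=0}^{r-1}\mu^{(m)}+\frac{l}{N}\nu_l,
\qquad
\mu=\frac{k_0}{N}\sum_{m=0}^{r-1}\mu+\frac{l}{N}\mu .
\]

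\textbf{Convexity step.} If $\pi_m$ are optimal couplings of $(\mu^{(m)},\mu)$ and $\pi'$ is optimal for $(\nu_l,\mu)$, the same convex combination of them is a coupling of $(\mu_N,\mu)$. This gives
\[
\cW_2^2(\mu_N,\mu)\le \frac{k_0}{N}\sum_{m=0}^{r-1}\cW_2^2(\mu^{(m)},\mu)+\frac{l}{N}\cW_2^2(\nu_l,\mu).
\]

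\textbf{Taking expectations.} By exchangeability, each $\mu^{(m)}$ has the same law as $\mu_{k_0}$, so $\E[\cW_2^2(\mu^{(m)},\mu)]=\E[\cW_2^2(\mu_{k_0},\mu)]$. Since $rk_0/N\le1$, the first part contributes at most $\E[\cW_2^2(\mu_{k_0},\mu)]$. For the remainder we use the crude bound obtained from the product coupling $\nu_l\otimes\mu$:
\[
\cW_2^2(\nu_l,\mu)\le \int\!\!\int|x-y|^2\,\nu_l(dx)\mu(dy)\le 2\int|x|^2\nu_l(dx)+2m_2(\mu).
\]
Its expectation is $2\E[|W_1|^2]+2m_2(\mu)$, again by exchangeability. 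Multiplying by $l/N$ yields the claimed term.

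\textbf{Main obstacle.} The only point needing care is the convexity inequality, namely checking that the mixture of couplings has the right marginals. This is immediate because the weights $k_0/N$ and $l/N$ are the same on both sides. When $l=0$ the remainder term simply disappears.
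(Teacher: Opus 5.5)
Your proof is correct. You decompose $\mu_N$ into $r$ blocks of size $k_0$ plus a remainder of size $l$, use the convexity of $\cW_2^2$ under mixtures with the same weights on both sides, identify each block with $\mu_{k_0}$ by exchangeability, and bound the remainder with the product coupling; this yields exactly the stated inequality. The paper gives no proof of its own and simply quotes the result from \cite[Lemma 21]{MR3621429}, and your block-plus-convexity argument is the standard proof of that lemma, so there is nothing to add.
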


We now move to verify Proposition \ref{thep}.
\begin{proof}[Proof of Proposition \ref{thep}]

Fix $k_0\le N$.  For $t\in[0,T]$, applying Lemma \ref{line} with $W_i=W_t^{i,K}$, $i=1,...,N$, and $\mu=f_t^K$, and using Lemma \ref{thele}, we obtain
\begin{align*}
    \E\Big[\cW_2^2\left(f_t^K,\frac{1}{N}\sum_{i=1}^N \delta_{W^{i,K}_t} \right)\Big]\le& \E\left[\cW_2^2\left(\frac{1}{k_0}\sum_{i=1}^{k_0}\delta_{W_t^{i,K}},f_t^K\right)\right]
    +\frac{k_0}{N}\Big(4m_2(f_t^K)\Big)\\
    \le & 2\E[|W^{1,K}_t-\tW^{1,K,k_0}_t|^2]+2\E\left[\cW_2^2\left(\frac{1}{k_0}\sum_{i=1}^{k_0}\delta_{\tW_t^{i,K,k_0}},f_t^K\right)\right]+12\frac{k_0}{N}.
\end{align*}
By Lemma \ref{lem: prop mom}, we know that $f_t^K\in\cP_q(\rd) $ with $q>4$, hence by Theorem 1 in \cite{MR3383341}, we have 
\[\E\left[\cW_2^2\left(\frac{1}{k_0}\sum_{i=1}^{k_0}\delta_{\tW_t^{i,K,k_0}},f_t^K\right)\right]\le Ck_0^{-1/2}.\] Using Lemma \ref{thele}, we finally conclude that 
\[\E\Big[\cW_2^2\left(f_t^K,\frac{1}{N}\sum_{i=1}^N \delta_{W^{i,K}_t} \right)\Big]\le C \frac{k_0}{N}+\frac{C}{\sqrt {k_0}}+\frac{C}{K^{1/\nu-1}}.\]
This finishes the proof by taking $k_0\sim N^{2/3}$.

\end{proof}

\section{Bound on the Lebesgue norm of an empirical measure}\label{ttt}
To control the negative moments involving the processes $W^{i,K}$, one possible strategy is to prove that their empirical measure has suitable regularity after an appropriate smoothing. However, unlike in \cite{MR3784497}, the processes $W^{i,K}$ are not independent. Fortunately, Lemma \ref{thele} shows that they are close to an independent family $\tW^{i,K,k}$. In this section, we prove that, after regularization, the empirical measure of $\tW^{i,K,k}$ is almost bounded in $L^2$, with an error depending on $k$.

\begin{nota}\label{nnn}
For $\epsilon>0$, set
\[
\psi_\epsilon(x)=\frac{3}{4\pi\epsilon^3}\bbd{1}_{\{|x|\le \epsilon\}},
\]
and write $\mu*\psi_\epsilon$ for the corresponding blob approximation of a measure $\mu$.
\end{nota}

\vskip1mm

The goal of this section is to prove the following crucial estimate.
\begin{prop}\label{norm-bound}
For $t\ge0$, let $(X^{i}_t)_{i=1,\dots,k}$ be i.i.d. random variables with distribution $\tf_t\in L^2(\R^3)\cap \cP_q(\R^3)$ with $q>4$ and set
$\mu_{{\bf {X}_t}}^k=k^{-1}\sum_{i=1}^k \delta_{X_t^{i}}$.
Fix $\delta\in(0,1)$, recall $\epsilon_k=k^{-(1-\delta)/3}$ and define
$\bar\mu_{{\bf {X}_t}}^k:=\mu_{{\bf {X}_t}}^k*\psi_{\epsilon_k}$, where $\psi_\epsilon$ was defined in Notation \ref{nnn}. Then we have
 $$
 \bb{P}\Big(\|\bar\mu_{{\bf {X}_t}}^k\|_{L^2}\le 144 \|\tf_t\|_{L^2})\Big)\ge 1- C_{q,\delta}k^{1-\delta q/3},
 $$
 where $C_{q,\delta}$ is independent of $k$.
\end{prop}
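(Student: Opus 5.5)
The plan follows the strategy of \cite[Proposition 5.5]{MR3572320} and \cite[Section 5]{MR3784497}: discretize $\R^3$ into cubes of side comparable to $\epsilon_k$, control the $L^2$ norm of the blob approximation by the occupation numbers of these cubes, and show by concentration that each occupation number is at most a constant times its mean. Concretely, let $\mathcal D$ be the partition of $\R^3$ into cubes $D$ of side $\epsilon_k$, and let $\tilde D$ be the union of $D$ and its $5^3-1$ neighbours, so that every ball of radius $\epsilon_k$ centred at a point of $D$ lies in $\tilde D$. Write $n_D:=\#\{i: X^i_t\in D\}$. Since $\psi_{\epsilon_k}\le \frac{3}{4\pi}\epsilon_k^{-3}$, for $x\in D$ we have $\bar\mu^k(x)\le \frac{3}{4\pi k\epsilon_k^{3}}\, n_{\tilde D}$. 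Hence
\[
\|\bar\mu^k\|_{L^2}^2\le \sum_{D}\epsilon_k^3\Big(\frac{3 n_{\tilde D}}{4\pi k\epsilon_k^3}\Big)^2 .
\]
Now set $p_D:=\tf_t(\tilde D)$, so that $\E[n_{\tilde D}]=kp_D$. By Cauchy--Schwarz, $p_D^2\le |\tilde D|\int_{\tilde D}\tf_t^2$. Summing over $D$, each point is counted $125$ times, which gives $\sum_D p_D^2\le 125^2\epsilon_k^3\|\tf_t\|_{L^2}^2$. Therefore, on the event that $n_{\tilde D}\le C_0\max\{kp_D,\,k\epsilon_k^3 \Lambda\}$ for all $D$ meeting a ball $B(0,R_k)$, together with a trivial treatment of the far region, we get $\|\bar\mu^k\|_{L^2}\lesssim \|\tf_t\|_{L^2}$. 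The constant $144$ should come from tracking these numerical factors. The threshold term $k\epsilon_k^3\Lambda$ handles cubes with tiny mass; it must be absorbed using the lower bound $\|\tf_t\|_{L^2}\ge$ a positive constant, or handled directly as below.

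The main probabilistic step is the binomial tail bound, using $k\epsilon_k^3=k^{\delta}\to\infty$. For cubes with $kp_D\ge k^{\delta}$, Chernoff gives $\bbP(n_{\tilde D}\ge 2kp_D)\le e^{-ckp_D}\le e^{-ck^\delta}$. For cubes with $kp_D<k^{\delta}$, Chernoff gives $\bbP(n_{\tilde D}\ge 2k^{\delta})\le e^{-ck^\delta}$. On this good event, the contribution of small cubes to the sum is at most $\sum_D \epsilon_k^3 (k^\delta/(k\epsilon_k^3))^2\cdot\#\{D: n_D>0\}$, which I would instead bound by $\epsilon_k^{-3}k^{-1}\cdot 2k^{\delta}\cdot\sum_D n_{\tilde D}/k\lesssim 1$. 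This is harmless once compared with $\|\tf_t\|_{L^2}^2$, which is bounded below by the same argument as Lemma \ref{lem: prop fk}-(iii) using the second moment. The number of cubes meeting $B(0,R_k)$ is $O(R_k^3\epsilon_k^{-3})$, so the union bound costs only polynomially times $e^{-ck^\delta}$, which is negligible.

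The remaining issue, and the source of the rate $k^{1-\delta q/3}$, is the far region. Choose $R_k=k^{\delta/3}$. By Markov and the union bound,
\[
\bbP\big(\exists i\le k:\ |X^i_t|\ge R_k\big)\le k\,m_q(\tf_t)R_k^{-q}=C k^{1-\delta q/3}.
\]
Off this event, all particles lie in $B(0,R_k)$ and only cubes within $B(0,R_k+\epsilon_k)$ matter. Combining this with the exponentially small Chernoff failure probability yields the stated bound, since $e^{-ck^\delta}\le C_{q,\delta}k^{1-\delta q/3}$. I expect the main obstacle to be the bookkeeping needed to get the explicit constant $144$ and to absorb the low-mass cubes without any lower bound on $\|\tf_t\|_{L^2}$. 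If the absorption fails, I would replace the threshold by a relative Bernstein bound $n_{\tilde D}\le 2kp_D+ C k^{\delta}$ and use the lower $L^2$ bound available in the application, namely Lemma \ref{lem: prop fk}-(iii).
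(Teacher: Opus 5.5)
Your architecture is the paper's. You bound $\|\bar\mu^k_{{\bf X}_t}\|_{L^2}^2$ by $k^{-2}\epsilon_k^{-3}\sum(\text{cube counts})^2$, apply Chernoff to each binomial count at a threshold $\max\{2\times\text{mean},\text{floor}\}$, take a union bound over the $O(k)$ cubes near $B(0,k^{\delta/3})$, and use Markov for particles outside that ball. Your pointwise majorization by $n_{\tilde D}$ is a clean substitute for the paper's Step 1.

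\textbf{The gap is in the low-mass cubes.} You use the absolute floor $2k^\delta$, the bound $n_{\tilde D}^2\le 2k^\delta n_{\tilde D}$, and $\sum_D n_{\tilde D}=125k$. On the good event this gives
\[
\|\bar\mu^k_{{\bf X}_t}\|_{L^2}^2\le A\,\|\tf_t\|_{L^2}^2+B,\qquad A=\frac{9\cdot 4\cdot 125^2}{16\pi^2}\approx 3.6\cdot 10^{3},\quad B=\frac{9\cdot 250}{16\pi^2}\approx 14,
\]
which has an additive term. To conclude $\|\bar\mu^k_{{\bf X}_t}\|_{L^2}\le 144\|\tf_t\|_{L^2}$ you need $\|\tf_t\|_{L^2}^2\ge B/(144^2-A)\approx 8\cdot 10^{-4}$. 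That is an absolute lower bound the proposition does not assume. The second-moment argument of Lemma \ref{lem: prop fk}-(iii) only gives $\|\tf_t\|_{L^2}^2\gtrsim m_2(\tf_t)^{-3/2}$, which can be made arbitrarily small by dilating $\tf_t$. So the proposition as stated is not proved. In the application, $\|f^K_t\|_{L^2}^2\ge 1/(81\pi)\approx 3.9\cdot 10^{-3}$, so with your constants Corollary \ref{cor: norm-bound} does survive. That is a numerical check you must carry out, not bookkeeping that automatically works out. Your Bernstein fallback, $n_{\tilde D}\le 2kp_D+Ck^\delta$, has the same additive defect.

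\textbf{The fix} is to let the floor scale with the density. Take the threshold $\max\{2kp_D,\lambda_k\}$ with $\lambda_k:=k^{\delta}\|\tf_t\|_{L^2}^2$.
\begin{itemize}
\item Your $n^2\le\lambda_k n$ argument now bounds the low-mass contribution by $\frac{9}{16\pi^2}\cdot\frac{125\,\lambda_k}{k\epsilon_k^3}=\frac{1125}{16\pi^2}\|\tf_t\|_{L^2}^2$. The estimate becomes purely multiplicative, with constant well below $144^2$.
\item The Chernoff failure probability $O(k)\exp(-k^\delta\|\tf_t\|_{L^2}^2/8)$ is still $\le C_{q,\delta}k^{1-\delta q/3}$.
\item The size of $\|\tf_t\|_{L^2}$ now enters only $C_{q,\delta}$, where any positive lower bound suffices.
\end{itemize}

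This is exactly the paper's device. Its threshold $x_k$ contains the floor $\frac14 k^{\delta/2}\|\tf_t\|_{L^2}$ on single cubes. The count $\#\mathscr{P}_k^\delta\le 64k$ (in place of your $\sum_D n_{\tilde D}=125k$) makes that floor contribute at most $4\|\tf_t\|_{L^2}^2$ to $k^{-2}\epsilon_k^{-3}\sum_D A_D^2$. The lower bound of Lemma \ref{lem: prop fk}-(iii) is used only to make $64k\exp(-ck^{\delta/2}\|\tf_t\|_{L^2})$ uniformly small, never to absorb an additive constant into $144$.
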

Recall the definition of $\tW^{i,K,k}_t$ in \eqref{def: w tw} and assume $1\le k\le N/2$ in Subsection \ref{sec: sec cou}. We denote
$\mu_{{\bf {\tW}^{K,k}_t}}^{k}:=k^{-1}\sum_{i=k+1}^{2k} \delta_{\tW^{i,K,k}_t}$, $\bar\mu_{{\bf {\tW}^{K,k}_t}}^{k}= \mu_{{\bf {\tW}^{K,k}_t}}^{k}*\psi_{\epsilon_k}$, and 
$$\Omega_{t,k}=\{\|\bar\mu_{{\bf {\tW}}^K_t}^k\|_{L^2}\le 144\|f^K_t\|_{L^2}\}.$$
Since $\tW^{i,K,k}_t$ are i.i.d. for indices $1\le i\le k$, Proposition \ref{norm-bound} directly gives the following corollary.
\begin{cor}\label{cor: norm-bound}
For  fixed $1\le k\le N/2$ and $t\ge0$, then we have
\begin{align*}
    \bb{P}(\Omega_{t,k})\ge 1- C_{q,\delta}k^{1-\delta q/3}.
\end{align*}
\end{cor}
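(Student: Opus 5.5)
The corollary should follow by applying Proposition \ref{norm-bound} to the $k$ processes $(\tW^{i,K,k}_t)_{k+1\le i\le 2k}$, so the work is to check its hypotheses. The index set $\{k+1,\dots,2k\}$ is exactly the block $I_1$. Since $k\le N/2$, we have $2k\le N$, so this block is complete: it has exactly $k$ elements and is not truncated by $\min\{(m+1)k,N\}$.

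By Lemma \ref{coupling1}, the processes $(\tW^{i,K,k})_{i\in I_1}$ are independent and identically distributed. Their common one-time marginal at time $t$ is $f^K_t$. Hence $X^i_t:=\tW^{k+i,K,k}_t$ for $i=1,\dots,k$ is an i.i.d. family with law $\tf_t:=f^K_t$.

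Next we verify the integrability assumptions on $\tf_t$. Lemma \ref{lem: prop fk}(ii) with $p=2$ gives $f^K_t\in L^2(\R^3)$, with a bound uniform in $t$ and $K$. Lemma \ref{lem: prop mom} gives $m_q(f^K_t)\le C_{T,q,f_0}<\infty$ for $t\le T$, where $q>4$ is the moment index assumed on $f_0$. Together these show $\tf_t\in L^2(\R^3)\cap\cP_q(\R^3)$.

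With this identification, the empirical measure in Proposition \ref{norm-bound} is exactly $\mu^k_{\tW^{K,k}_t}$, and its mollification is $\bar\mu^k_{\tW^{K,k}_t}$. The event in the proposition is therefore precisely $\Omega_{t,k}$, and the corollary follows:
\[
\bbP(\Omega_{t,k})\ge 1-C_{q,\delta}k^{1-\delta q/3}.
\]

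The only point needing care is whether $C_{q,\delta}$ depends on $t$ or $K$ through $m_q(f^K_t)$ or $\|f^K_t\|_{L^2}$. The $L^2$ norm is uniformly bounded by Lemma \ref{lem: prop fk}, and by Lemma \ref{lem: prop fk}(iii) it is also bounded below by $M_2$. The $q$-th moment is bounded uniformly on $[0,T]$ and in $K$ by Lemma \ref{lem: prop mom}, whose constant $C_q(1+m_q(f_0))e^{C_qT}$ does not depend on $K$. So the constant can be taken to depend only on $q,\delta,T,f_0$, which is all that is used later.
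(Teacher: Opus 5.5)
Your proposal is correct and is essentially the paper's argument: the paper simply observes that the decoupled processes in a block are i.i.d.\ with marginal $f^K_t$ by Lemma~\ref{coupling1} and invokes Proposition~\ref{norm-bound} directly. Your additional checks make explicit what the paper leaves implicit, and they are accurate. First, $\{k+1,\dots,2k\}$ is the full block $I_1$ because $k\le N/2$ (the paper loosely writes ``$1\le i\le k$''). Second, the constant stays uniform in $K$ and in $t\in[0,T]$, thanks to Lemma~\ref{lem: prop mom} and to the lower bound of Lemma~\ref{lem: prop fk}(iii), which is used in Step~2 of the proposition.
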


Throughout the section, we fix $k\ge 1$, $\delta\in(0,1)$,  $\epsilon_k=k^{-(1-\delta)/3}$ and adopt the assumptions and notations of Proposition \ref{norm-bound}. 

\vskip1mm

\subsection{Proof of Proposition \ref{norm-bound}}

Consider the partition $\scr{P}_k$ of  $\R^3$ in cubes with side length $\e_k$ and its subset $\scr{P}_k^\delta$  consisting of cubes that have non-empty intersection with  $B(0, k^{\delta/3})$. Then we observe that $\# (\scr{P}_k^\delta)\le (2(k^{\delta/3}+\epsilon_k)\epsilon_k^{-1})^3\le 64 k^\delta\epsilon_k^{-3}=64k$. We split the proof into several steps.

\vskip1mm 

{\it Step 1.}   For  $(x_1,..., x_k)\in(B(0,k^{\delta/3}))^k$, we denote the empirical measure of $(x_1,..., x_k)$ by  $\mu_{{\bf x}}^k=k^{-1}\sum_{i=1}^k \delta_{x_i}$.
The goal of this step is to show that
\[\| \mu_{{\bf x}}^k*\psi_{\epsilon_k} \|_{L^2}\le 36 \Big(k^{-2}\epsilon_k^{-3}\sum_{D\in\scr{P}_k^\delta}(\#\{i\in\{1,...,k\} : x_i\in D\})^2\Big)^{1/2}.\]
Indeed, recalling that $\psi_{\epsilon_k}(x)=(3/(4\pi\epsilon_k^3))\bbd{1}_{\{|x|\le \epsilon_k\}}$, we observe that
\begin{align*}
\mu_{{\bf x}}^k*\psi_{\epsilon_k}(v)
& = k^{-1} \sum_{i=1}^{k} \psi_{\epsilon_k}(v-x_i)\\
& =  \frac{3}{4\pi k\epsilon_k^3} \# \big\{ i\in\{1,...,k\}: x_i\in B(v,\epsilon_k) \big\}.
\end{align*}
Hence,
$$
\mu_{{\bf x}}^k*\psi_{\epsilon_k}(v) \le \frac{3}{4\pi k\epsilon_k^3} \sum_{D\in\scr{P}_k^\delta}\# \big \{  i\in\{1,...,k\} :  x_i\in D \big \} \bbd{1}_{\{D\cap B(v, \epsilon_k) \neq \emptyset\}}.
$$
We then deduce that
\begin{align*}
&\|\mu_{{\bf x}}^k*\psi_{\epsilon_k}\|_{L^2}\\
&\le \frac{3}{4\pi k\epsilon_k^3} \| \sum_{D\in\scr{P}_k^\delta}\# \{ i\in\{1,...,k\}:  x_i\in D\} \bbd{1}_{\{D\cap B(\cdot, \epsilon_k)\neq \emptyset\}} \|_{L^2}.
 \end{align*}
On the other hand,  let $A := \| \sum_{D\in\scr{P}_k^\delta}\# \{i\in\{1,...,k\} :  x_i\in D\} \bbd{1}_{\{D\cap B(\cdot, \epsilon_k)\neq \emptyset\}} \|_{L^2}$, then
\begin{align*}
A^2
&=\int_{\bb{R}^3}\Big(\sum_{D\in\scr{P}_k^\delta} \# \{i:  x_i\in D\} \bbd{1}_{\{D\cap B(v, \epsilon_k)\neq \emptyset\}}\Big)^2 dv\\
&=\int_{\bb{R}^3}\Big(\sum_{D,D^\prime\in\scr{P}_k^\delta} \# \{i:  x_i\in D\}\# \{i:  x_i\in D^\prime\} \bbd{1}_{\{D\cap B(v, \epsilon_k)\neq \emptyset, D^\prime\cap B(v, \epsilon_k)\neq \emptyset\}}\Big) dv.
\end{align*}
From  $x^2+y^2\ge 2xy$ and a symmetry argument, we see that
\begin{align*}
A^2 \le \sum_{D\in\scr{P}_k^\delta} (\# \{i: x_i\in D\})^2 \int_{\bb{R}^3}
 \bbd{1}_{\{D\cap B(v, \epsilon_k)\neq \emptyset\}} \sum_{D^\prime\in\scr{P}_k^\delta}  \bbd{1}_{\{D^\prime \cap B(v, \epsilon_k)\neq \emptyset \}} dv.
\end{align*}
But,  for each $v\in\bb{R}^3$,
$\sum_{D^\prime\in\scr{P}_k^\delta}  \bbd{1}_{\{D^\prime \cap B(v, \epsilon_k)\neq \emptyset \}}=\#\{D^\prime\in\scr{P}_k^\delta: D^\prime \cap B(v, \epsilon_k)\neq \emptyset\} \le 3^3.$
And for each $D\in\scr{P}_k^\delta$, $\{v\in\bb{R}^3: D\cap B(v, \epsilon_k)\neq \emptyset\}$ is included by a ball of radius $3\e_k$. Therefore,
 $\int_{\bb{R}^3}
 \bbd{1}_{\{D\cap B(v, \epsilon_k)\neq \emptyset\}} dv \le 4\pi(3\epsilon_k)^3/3$. Hence,
 $$
 A^2 \le \frac{3^3 4\pi (3\e_k)^3}{3}\sum_{D\in\scr{P}_k^\delta} \big(\# \{i: x_i\in D\} \big)^2.
 $$
 Consequently,
 \begin{align*}
 \|\mu_{{\bf x}}^k*\psi_{\epsilon_k}(v)\|_{L^2}
 &\le \frac{3}{4\pi k\epsilon_k^3} A \\
 &\le   \big(\frac{3}{4\pi} \big)^{1/2} (9)^{3/2}
\Big(k^{-2}\epsilon_k^{-3}\sum_{D\in\scr{P}_k^\delta} \big(\#\{ i: x_i\in D\} \big)^2\Big)^{1/2}.
 \end{align*}
Since $(9)^{3/2}=27\le 36$, this ends the step.

\vskip1mm 

{\it Step 2.} In this step, we 
will show that  there are some constants $C>0$ and $c>0$ (depending on $\delta$ and $f_0$, such that for all $0\le t\le T$
 \[\bb{P}[(\Omega_{t,k}^2)^c]\le C \exp{(-ck^{\delta/2})},\]
 where \[\Omega_{t,k}^2:=\left\{k^{-2}\epsilon_k^{-3}\sum_{D\in\scr{P}_k^\delta}\Big(\#\{i\in\{1,...,k\} : X_t^i\in D\}\Big)^2 \le 8 \| \tf_t\|_{L^2}^2 \right\}.\]
 To this end, we introduce, for $D\in\scr{P}_k^\delta$, $A_D=\#\{i: X_t^i\in D\}$. Then $A_D\sim B(k, \tf_t(D))$
and we have
 \beq\label{Prop-proba}
 \bb{P}(A_D\ge x)\le \exp(-x/8) \quad \text{for\, all}\quad  x\ge 2k\tf_t(D).
 \eeq
Indeed, $\bb{P}(A_D\ge x)\le e^{-x} \bb{E}[\exp(A_D)]=e^{-x}\exp[k\log(1+\tf_t(D)(e-1))] \le e^{-x}\exp[k(e-1)\tf_t(D)]$. If  $x\ge 2k\tf_t(D)$, we thus have
$$
\bb{P}(A_D\ge x) \le \exp[-x+x(e-1)/2] \le \exp(-x/8).
$$
Next, it follows  from  the H\"{o}lder inequality that
$$
\|\tf_t\|_{L^2}^2 \ge \sum_{D\in\scr{P}_k^\delta} \int_D |\tf_t(v)|^2 dv \ge  \e_k^{-3} \sum_{D\in\scr{P}_k^\delta}(\tf_t(D))^2.
$$
On the other hand,  we observe from $\# (\scr{P}_k^\delta)\le 64 k^\delta\epsilon_k^{-3}$ that
$$
\|\tf_t\|_{L^2}^2  \ge  64^{-1}k^{-\delta}\e_k^3 \sum_{D\in\scr{P}_k^\delta} \|\tf_t\|_{L^2}^2.
$$
Using the two previous inequalities, we find that
$$
8\|\tf_t\|_{L^2}^2  \ge \sum_{D\in\scr{P}_k^\delta} \big(4\e_k^{-3}(\tf_t(D))^2+
4\times 64^{-1}k^{-\delta}\e_k^3\|\tf_t\|_{L^2}^2 \big).
$$
Consequently, on  $(\Omega_{t,k}^2)^c$, we have
$$
\sum_{D\in\scr{P}_k^\delta} A_D^2 >k^2\e_k^{3}8\|\tf_t\|_{L^2}^2 \geq k^2\e_k^{3}
\sum_{D\in\scr{P}_k^\delta} \big(4\e_k^{-3}(\tf_t(D))^2+ 16^{-1}k^{-\delta}\e_k^3\|\tf_t\|_{L^2}^2 \big),
$$
so that there is
at least one $D\in\scr{P}_k^\delta$ with $A_D^2 \ge k^2 \e_k^{3}
\big[4\e_k^{-3}(\tf_t(D))^2+16^{-1}k^{-\delta}\e_k^3\|\tf_t\|_{L^2}^2 \big] $. Hence,
 $$
\bb{P}[(\Omega_{t,k}^2)^c] \le \sum_{D\in\scr{P}_k^\delta} \bb{P}\Big(A_D \ge k \e_k^{3/2}
\big[4\e_k^{-3}(\tf_t(D))^2+16^{-1}k^{-\delta}\e_k^3\|\tf_t\|_{L^2}^2\big]^{1/2}\Big).
 $$
 But we can apply \eqref{Prop-proba}, because $x_k:=k \e_k^{3/2} \big[4\e_k^{-3}(\tf_t(D))^2+16^{-1}k^{-\delta}\e_k^3\|\tf_t\|_{L^2}^2\big]^{1/2}$ enjoys
the property that $x_k\ge k \e_k^{3/2} [4\e_k^{-3}(\tf_t(D))^2]^{1/2}=2k\tf_t(D)$:
  $$ \bb{P}[(\Omega_{t,k}^2)^c] \le \sum_{D\in\scr{P}_k^\delta} \exp(-x_k/8).$$
Using that  $x_k \ge k \e_k^{3/2}(16^{-1}k^{-\delta}\e_k^3\|\tf_t\|_{L^2}^2)^{1/2}= ck^{\delta/2}\|\tf_t\|_{L^2}$,
that $\#(\scr{P}_k^\delta) \le 64k$ and that $\|\tf_t\|_{L^2}\ge M_2$, we deduce that
  $$
  \bb{P}[(\Omega_{t,k}^2)^c] \le \sum_{D\in\scr{P}_k^\delta} \exp(- ck^{\delta/2}\|\tf_t\|_{L^2}/8)
\le 64k \exp(-c M_2k^{\delta/2}/8) \le C\exp(-cM_2 k^{\delta/2}/10).
  $$
  This ends the step.

  \vskip1mm 
  
  {\it Step 3.}   
 We denote $\Omega_{t,k}^1=\{\textit{for all $i=1,...,k$},\  |X_t^i|\le k^{\delta/3}\}$.
 Finally, we show that  on $\Omega^2_{t,k}\cap \Omega_{t,k}^1$,  $\|\bar\mu_{{\bf X}_t}^k\|_{L^2}\le 144 (1+\|\tf_t\|_{L^2})$. This is sufficient to deduce the final result since
 \begin{align*}
     \mathbb{P}((\Omega_{t,k}^1)^c)\le k \mathbb{P}(|X_t^{1}|\ge k^{\delta/3})
     \le \E[|X_t^{1}|^q] k^{1-q\delta/3}\le Ck^{1-q\delta/3}.
 \end{align*}
 We have
 \benu[label=(\roman*)]
 \item for all $i=1,...,k$, $X_t^{i}\in B(0, k^{\delta/3})$
(according to $\Omega_{t,k}^1$);

 \item $k^{-2}\epsilon_k^{-3}\sum_{D\in\scr{P}_k^\delta}\Big(\#\big\{i\in\{1,...,k\} : \, X_t^{i}\in D\big \}\Big)^2\le 2^{3} \| \tf_t \|_{L^2}^2$ (according to $\Omega_{t,k}^2$).
 \eenu

Using Step 1 with  $\bar\mu_{{\bf {X}_t}}^k=\mu_{{\bf {X}_t}}^k*\psi_{\epsilon_k}$,
we deduce that on $$\Omega^2_{t,k}\cap \Omega_{t,k}^1,$$  we
have
\begin{align*}
\|\bar\mu_{{\bf {X}_t}}^k\|_{L^2}\le&    36\Big(k^{-2}\epsilon_k^{-3}\sum_{D\in\scr{P}_k^\delta}(\#\{i\in\{1,...,k\} : X_t^{i}\in D\})^2\Big)^{1/2}\\
\leq &  36\times 2^{3/2}\| \tf_t \|_{L^2}.
\end{align*}
This completes the proof, since  $36\times 2^{3/2}\leq 144$ and we finally have 
\begin{align*}
    \bb{P}\Big(\|\bar\mu_{{\bf {X}_t}}^k\|_{L^2}\le 144 \|\tf_t\|_{L^2}\Big)\ge \bb{P}(\Omega^2_{t,k}\cap \Omega_{t,k}^1)\ge 1- \mathbb{P}((\Omega_{t,k}^1)^c)- \mathbb{P}((\Omega_{t, k}^2)^c) \ge 1- C_{q,\delta}k^{1-\delta q/3}.
\end{align*}

\section{Proof of Theorem \ref{th: prop chaos}}
 We first set $\e=\e_k=k^{-(1-\delta)/3}$ in \eqref{def: wik}, where  $1\le k \le N/2$ and $0<\delta<1$ are parameters that will be specified later. For $t\in[0,T]$, we denote the empirical measure associated with $\bW_t^K$ by 
\begin{align*}
\mu^{N}_{\bW^K_t}:= \frac{1}{N}\sum_{i=1}^N \delta_{W^{i,K}_t}.
\end{align*}
Applying the triangle inequality for $\cW_2$, together with Lemma~\ref{lem: ftKft} and Proposition~\ref{thep}, we obtain
 \begin{align}\label{ineq: main proof}
    \E[\cW_2^2(\mu^N_{\bV_t},f_t) ]\le& 8\Big(\E[\cW_2^2(\mu^N_{\bV_t},\mu^N_{\bW_t^K})]+\E[\cW_2^2(\mu^N_{\bW_t^K},f^K_t) ]+\E[\cW_2^2(f^K_t,f_t) ]\Big)\nonumber\\
    \le& 8N^{-1}\sum_{j=1}^N\E[|V^j_t-W^{j,K}_t|^2]+C_{T,f_0}\Big(N^{-1/3}+K^{1-1/\nu}\Big)\nonumber\\
    =& 8\E[|V^1_t-W^{1,K}_t|^2]+C_{T,f_0}\Big(N^{-1/3}+K^{1-1/\nu}\Big),
\end{align}
where the last equality follows from the exchangeability of the particles. Therefore, it only remains to estimate
$\E[|V^1_t-W^{1,K}_t|^2]$.
Using It\^o's formula, we have
\begin{align*}
&\E[|V^{1}_t-W^{1,K}_t|^2]\\
&= \frac{1}{N}\intot \int_0^1 \int_0^\infty \int_0^{2\pi} \E\Big[ |V^{1}_s-W^{1,K}_s + \Delta^{1,1}(s,\alpha,z,\varphi) |^2 - 
 |V^{1}_s-W^{1,K}_s|^2 \Big] d\varphi dz d\alpha ds\\
 &\quad+\frac{1}{N}\sum_{j=2}^N\intot \int_0^1 \int_0^\infty \int_0^{2\pi} \E\Big[ |V^{1}_s-W^{1,K}_s + \Delta^{1,j}(s,\alpha,z,\varphi) |^2 - 
 |V^{1}_s-W^{1,K}_s|^2 \Big] d\varphi dz d\alpha ds,
\end{align*}
where
\begin{align*}
\Delta^{1,j}(s,\alpha,z,\varphi)=c(V^{1}_s,V_s^{j},z,\varphi)-c_K(W^{1,K}_s,\Pi_s^{j,K}(\bW^K_s,\alpha),z,\varphi+\hat\varphi_{1,j,s}),
\end{align*}
and $\hat\varphi_{1,j,s}$ is defined on \eqref{def: wik}. We split the proof into several steps.

\vskip2mm

{\it Step 0.} We first treat the term corresponding to $j=1$. Since, by convention,
$\hat\varphi_{1,1,s}=0$, we have
 $$\Delta^{1,1}(s,\alpha,z,\varphi)=-c_K(W^{1,K}_s,\Pi_s^{1,K}(\bW^K_s,\alpha),z,\varphi).$$  
 Proceeding exactly as in the estimate of $\E[J_t^1]$ in the proof of Lemma~\ref{thele}, we obtain
\begin{align*}
&\frac{1}{N}\intot \int_0^1 \int_0^\infty \int_0^{2\pi}
\E\Big[
|V^{1}_s-W^{1,K}_s+\Delta^{1,1}(s,\alpha,z,\varphi)|^2
-|V^{1}_s-W^{1,K}_s|^2
\Big]\,d\varphi\,dz\,d\alpha\,ds
\le \frac{CT}{N}.
\end{align*}

We next turn to the contributions corresponding to $j\ge 2$.
Applying Lemma~\ref{fundest}  with $v=V_s^1, v_*=V_s^j, \tv=W_s^{1,K}, \tv_*=\Pi_s^{j,K}(\bW^K_s,\alpha)$ and $y= -\e_k Y(\alpha)$, we obtain
\begin{align*}
&\frac{1}{N}\sum_{j=2}^N\intot \int_0^1 \int_0^\infty \int_0^{2\pi} \E\Big[ |V^{1}_s-W^{1,K}_s + \Delta^{1,j}(s,\alpha,z,\varphi) |^2 - 
 |V^{1}_s-W^{1,K}_s|^2 \Big] d\varphi dz d\alpha ds\\
&\le C\int_0^t \big(B_1^K(s)+B_2^K(s)+B_3^K(s)+B_4^K(s)\big)ds. 
\end{align*}
where
\begin{align*}
    B_1^K(s):=&\frac{1}{N}\sum_{j=1}^N \int_0^1\E\left[ |V^{1}_s-W^{1,K}_s|^2(1+|\e_k Y(\alpha)+W^{1,K}_s-
\Pi_s^{j,K}(\bW^K_s,\alpha)|^\gamma) \right]d\alpha,\\
B_2^K(s):=& \int_0^1\E[|\e_k Y(\alpha)|^{2+2\gamma}+|\e_k Y(\alpha)|^{2+\gamma}]d\alpha,\\
B_3^K(s):=& \frac{1}{N}\sum_{j=2}^N\int_0^1\E\left[ |V^{j}_s-\Pi_s^{j,K}(\bW^K_s,\alpha)|^2|\e_k Y(\alpha)+W^{1,K}_s-
\Pi_s^{j,K}(\bW^K_s,\alpha)|^\gamma \right] d\alpha,\\
B_4^K(s):=&\frac{CK^{-1/\nu+1}}{N}\sum_{j=1}^N\int_0^1\E[1+|V^{1}_s|^2+|V^{j}_s|^2+|W^{1,K}_s|^2+|\Pi_s^{j,K}(\bW^K_s,\alpha)|^2+|\e_k Y(\alpha)|^{2}] d\alpha.
\end{align*}

{\it Step 1.} We first estimate the terms $B_1^K(s)$, $B_2^K(s)$, and $B_4^K(s)$. Since all second moments involved are finite, it follows that for any $t\in[0,T]$,
\begin{align}\label{ineq: b4K}
    \int_0^t B_4^K(s) ds\le C_TK^{-1/\nu+1}.
\end{align}
Moreover, since $|Y(\alpha)|\le 1$,
\begin{align}\label{ineq: b2K}
    \int_0^t B_2^K(s) ds\le 
T(\e_k^{2+2\gamma}+\e_k^{1+\gamma})
    \le C_T k^{-(1-\delta)(2+2\gamma)/3}.
\end{align}
We next consider $B_1^K(s)$. By definition, 
 \[   \int_0^t B_1^K(s) ds=  \int_0^t\E\Big[|V^{1}_s-W^{1,K}_s|^2\Big(1+\frac{1}{N}\sum_{j=1}^N\int_0^1 |\e_k Y(\alpha)+W^{1,K}_s-
\Pi_s^{j,K}(\bW^K_s,\alpha)|^\gamma d\alpha\Big)\Big]ds.
\]
Recall that $Y(\alpha)$ is independent of $\Pi_s^{j,K}$. By Lemma \ref{ww2p}-(ii), we have 
\begin{align*}
\frac{1}{N}\sum_{j=1}^N\int_0^1 |\e_k Y(\alpha)+W^{1,K}_s-
\Pi_s^{j,K}(\bW^K_s,\alpha)|^\gamma d\alpha
=\int_0^1 \int_{\R^3}|\e_k Y(\alpha)+W^{1,K}_s-
x|^\gamma f^K_s(dx) d\alpha.
\end{align*}
Since $\gamma\in(-1,0)$, Lemma~\ref{lem: prop fk}-(i) yields $\int_{\R^3}|\e_k Y(\alpha)+W^{1,K}_s-
x|^\gamma f^K_s(dx)\le 1+I_1(f_0).$
Therefore, 
\begin{align}\label{ineq: b1K}
\int_0^t B_1^K(s) ds\le C_{f_0}\int_0^t \E[|V^{1}_s-W^{1,K}_s|^2] ds.
\end{align}

{\it Step 2.} We now estimate $B_3^K(s)$. By exchangeability,  for every $j\ge 2$,
$$
\cL\big(V^{j}_s,\Pi_s^{j,K}(\bW^K_s,\alpha),W^{1,K}_s\big)
=
\cL\big(V^{k+1}_s,\Pi_s^{k+1,K}(\bW^K_s,\alpha),W^{1,K}_s\big)
=
\cL\big(V^{1}_s,\Pi_s^{1,K}(\bW^K_s,\alpha),W^{k+1,K}_s\big).
$$ 
Hence, using that $(N-1)/N\le1$
\begin{align*}
B_3^K(s)=&\frac{N-1}{N}\int_0^1\E\left[ |V^{k+1}_s-\Pi_s^{k+1,K}(\bW^K_s,\alpha)|^2|\e_k Y(\alpha)+W^{1,K}_s-
\Pi_s^{k+1,K}(\bW^K_s,\alpha)|^\gamma\right] d\alpha\\
\le&\ \int_0^1
\E\left[
|V^{1}_s-\Pi_s^{1,K}(\bW^K_s,\alpha)|^2
|\e_kY(\alpha)+W^{k+1,K}_s-
\Pi_s^{1,K}(\bW^K_s,\alpha)|^\gamma
\right]\,d\alpha\\
=:&\ B_{30}^K(s)+B_{31}^K(s)+B_{32}^K(s),
\end{align*}
where
\begin{align*}
B_{30}^K(s):=& \int_0^1 \E\Big[ |V_s^{1} -
\Pi_s^{1,K}(\bW^K_s,\alpha)|^2\Big(|\e_kY(\alpha)+W^{k+1,K}_s-
\Pi_s^{1,K}(\bW^K_s,\alpha)|^\gamma\\
&\hskip4.5cm -|\e_kY(\alpha)+\tW^{k+1,K,k}_s-
\Pi_s^{1,K}(\bW^K_s,\alpha)|^\gamma\Big) \Big] d\alpha,\\
     B_{31}^K(s):=& \int_0^1 \E\left[ |V_s^{1} -
\Pi_s^{1,K}(\bW^K_s,\alpha)|^2|\e_kY(\alpha)+\tW^{k+1,K,k}_s-
\Pi_s^{1,K}(\bW^K_s,\alpha)|^\gamma\indiq_{\Omega_{s,k}} \right] d\alpha,\\
 B_{32}^K(s):=& \int_0^1 \E\left[ |V_s^{1} -
\Pi_s^{1,K}(\bW^K_s,\alpha)|^2|\e_kY(\alpha)+\tW^{k+1,K,k}_s-
\Pi_s^{1,K}(\bW^K_s,\alpha)|^\gamma\indiq_{\Omega^c_{s,k}} \right] d\alpha.
\end{align*}

{\it Step 3.}
We first consider $B_{30}^K(s)$. Using $|x|^\gamma-|y|^\gamma\le C|x-y|(|x|^{\gamma-1}+|y|^{\gamma-1}),$ we obtain 
\[
B_{30}^K(s)\le C\E\big[B_{301}^K(s)+B_{302}^K(s)\big],
\]
where 
\begin{align*}
B_{301}^K(s)&:=\int_0^1 |V_s^{1} -
\Pi_s^{1,K}(\bW^K_s,\alpha)|^2|W^{k+1,K}_s-\tW^{k+1,K,k}_s|\\
&\hskip5cm\times |\e_kY(\alpha)+W^{k+1,K}_s-\Pi_s^{1,K}(\bW^K_s,\alpha)|^{\gamma-1}d\alpha,\\
B_{302}^K(s)&:=\int_0^1 |V_s^{1} -
\Pi_s^{1,K}(\bW^K_s,\alpha)|^2|W^{k+1,K}_s-\tW^{k+1,K,k}_s|\\
&\hskip5cm\times |\e_kY(\alpha)+\tW^{k+1,K,k}_s-\Pi_s^{1,K}(\bW^K_s,\alpha)|^{\gamma-1} d\alpha.
\end{align*} 
We only estimate the first singular term $B_{301}^K(s)$, since the second one is treated similarly.   Applying $|x+y|^2|x|^{\gamma-1}\le 2|x|^{1+\gamma}+2|y|^2|x|^{\gamma-1}$ with $x=\e_kY(\alpha)+W^{k+1,K}_s-
\Pi_s^{1,K}(\bW^K_s,\alpha)$, $y=-(\e_kY(\alpha)+W^{k+1,K}_s-V^{1}_s) ,$ we get
\begin{align*}
B_{301}^K(s)
&\le
2|W^{k+1,K}_s-\tW^{k+1,K,k}_s|
\int_0^1
\Big(
|\e_kY(\alpha)+W^{k+1,K}_s-\Pi_s^{1,K}(\bW^K_s,\alpha)|^{\gamma+1}\\
&\qquad\qquad\qquad
+
|\e_kY(\alpha)+W^{k+1,K}_s-V^{1}_s|^2
|\e_kY(\alpha)+W^{k+1,K}_s-\Pi_s^{1,K}(\bW^K_s,\alpha)|^{\gamma-1}
\Big)\,d\alpha.
\end{align*}
Using Young's inequality, we have 
\begin{align*}
&|\e_kY(\alpha)+W^{k+1,K}_s-V^{1}_s|^2
|\e_kY(\alpha)+W^{k+1,K}_s-\Pi_s^{1,K}(\bW^K_s,\alpha)|^{\gamma-1}\\
&\le |\e_kY(\alpha)+W^{k+1,K}_s-\Pi_s^{1,K}(\bW^K_s,\alpha)|^{-2}
+|\e_kY(\alpha)+W^{k+1,K}_s-V^{1}_s|^{\frac{4}{1+\gamma}}.
\end{align*}
Since  $|Y(\alpha)|\le 1$, thus 
\begin{align*}
B_{301}^K(s)
&\quad\le
C|W^{k+1,K}_s-\tW^{k+1,K,k}_s|
\int_0^1
\Big(
1+|W^{k+1,K}_s|^{\frac{4}{1+\gamma}}
+|\Pi_s^{1,K}(\bW^K_s,\alpha)|^{1+\gamma}\\
&\qquad\qquad\qquad
+|\e_kY(\alpha)+W^{k+1,K}_s-\Pi_s^{1,K}(\bW^K_s,\alpha)|^{-2}
+|W^{k+1,K}_s-V^{1}_s|^{\frac{4}{1+\gamma}}
\Big)\,d\alpha.
\end{align*}
We next use exchangeability to control the terms involving $\Pi_s^{1,K}$. 
For all  $j\notin \{k+1,...,2k\},$
\begin{align*}
    \cL(W^{k+1,K}_s,\tW^{k+1,K,k}_s,\Pi_s^{j,K}(\bW^K_s,\alpha) ) = \cL(W^{k+1,K}_s,\tW^{k+1,K,k}_s,\Pi_s^{1,K}(\bW^K_s,\alpha) ).
\end{align*}
Hence, 
\begin{align*}
&\E\Bigg[
|W^{k+1,K}_s-\tW^{k+1,K,k}_s|
\int_0^1
\Big(
|\Pi_s^{1,K}(\bW^K_s,\alpha)|^{\gamma+1}\\
&\hskip5cm +
|\e_kY(\alpha)+W^{k+1,K}_s-\Pi_s^{1,K}(\bW^K_s,\alpha)|^{-2}
\Big)\,d\alpha
\Bigg]\\
&=\ \E\Bigg[
|W^{k+1,K}_s-\tW^{k+1,K,k}_s|
\int_0^1\frac{1}{N-k}
\sum_{j\notin\{k+1,\dots,2k\}}
\Big(
|\Pi_s^{j,K}(\bW^K_s,\alpha)|^{\gamma+1}\\
&\hspace{5cm}
+
|\e_kY(\alpha)+W^{k+1,K}_s-\Pi_s^{j,K}(\bW^K_s,\alpha)|^{-2}
\Big)\,d\alpha
\Bigg]\\
&\le\ \E\Bigg[
|W^{k+1,K}_s-\tW^{k+1,K,k}_s|
\int_0^1\frac{1}{N-k}
\sum_{j=1}^{N}
\Big(
|\Pi_s^{j,K}(\bW^K_s,\alpha)|^{\gamma+1}\\
&\hspace{5cm}
+
|\e_kY(\alpha)+W^{k+1,K}_s-\Pi_s^{j,K}(\bW^K_s,\alpha)|^{-2}
\Big)\,d\alpha
\Bigg].
\end{align*}
By Lemma \ref{ww2p}-(ii),  the assumption $k\le N/2$ and Lemma \ref{lem: prop fk}-(i), the last inequality is bounded by 
\begin{align*}
&2\E\Bigg[
|W^{k+1,K}_s-\tW^{k+1,K,k}_s|
\int_0^1\int_{\R^3}
\Big(
|x|^{\gamma+1}
+
|\e_kY(\alpha)+W^{k+1,K}_s-x|^{-2}
\Big)
f_s^K(dx)\,d\alpha
\Bigg]\\
&\le\ C_{f_0}\E[|W^{k+1,K}_s-\tW^{k+1,K,k}_s|],
\end{align*}
where the  last inequality follows from the fact that, with respect to the auxiliary variable $\alpha$, for any $\bw\in (\R^3)^N$ the random variables $Y(\alpha)$ and $\Pi_s^{j,K}(\bw,\alpha)$ are independent. Denote by $\E_\alpha$ the expectation with respect to $\alpha$. Then, for fixed $w\in\R^3 $, we have
\begin{align*}
&\frac{1}{N}\sum_{j=1}^N
\int_0^1
|\e_kY(\alpha)+w-\Pi_s^{j,K}(\bw,\alpha)|^{-2}
\,d\alpha\\
&\quad=
\frac{1}{N}\sum_{j=1}^N
\E_\alpha\left[
\E_\alpha\left[
|\e_kY(\alpha)+w-\Pi_s^{j,K}(\bw,\alpha)|^{-2}
\,\big|\,Y
\right]
\right]\\
&\quad=
\frac{1}{N}\sum_{j=1}^N
\E_\alpha\left[
\int_{\R^3}
|\e_kY(\alpha)+w-x|^{-2}
f_s^K(dx)
\right]\\
&\quad=
\int_0^1\int_{\R^3}
|\e_kY(\alpha)+w-x|^{-2}
f_s^K(dx)\,d\alpha.
\end{align*}
The corresponding term involving $\tW^{k+1,K,k}_s$ in $B_{302}^K(s)$ is estimated in the same way. Consequently, for every $M\ge 1$,
\begin{align*}
B_{30}^K(s)
\le&\ C_{f_0}\E\left[
|W^{k+1,K}_s-\tW^{k+1,K,k}_s|
\Big(
1+|V^{1}_s|^{\frac{4}{1+\gamma}}
+|W^{k+1,K}_s|^{\frac{4}{1+\gamma}}
+|\tW^{k+1,K,k}_s|^{\frac{4}{1+\gamma}}
\Big)
\right]\\
\le&\ C_{f_0}M
\E[|W^{k+1,K}_s-\tW^{k+1,K,k}_s|]\\
&\quad
+C_{f_0}\E\left[
\Big(
1+|V^{1}_s|^{\frac{4}{1+\gamma}}
+|W^{k+1,K}_s|^{\frac{4}{1+\gamma}}
+|\tW^{k+1,K,k}_s|^{\frac{4}{1+\gamma}}
\Big)^2\right.\\
&\hspace{4cm}\left.
\times
\indiq_{\{
|V^{1}_s|^{\frac{4}{1+\gamma}}
+|W^{k+1,K}_s|^{\frac{4}{1+\gamma}}
+|\tW^{k+1,K,k}_s|^{\frac{4}{1+\gamma}}\ge M
\}}
\right]\\
\le&\ C_{f_0}M\E[|W^{k+1,K}_s-\tW^{k+1,K,k}_s|]
+
CM^{2-\frac{(1+\gamma)q}{4}}\\
&\quad\times
\E\left[
\Big(
1+|V^{1}_s|^{\frac{4}{1+\gamma}}
+|W^{k+1,K}_s|^{\frac{4}{1+\gamma}}
+|\tW^{k+1,K,k}_s|^{\frac{4}{1+\gamma}}
\Big)^{q(1+\gamma)/4}
\right]\\
\le&\ \frac{C_{f_0}Mk}{N}
+
C_{q,f_0}M^{2-\frac{(1+\gamma)q}{4}}+\frac{C_{f_0}M}{K^{1/\nu-1}}.
\end{align*}
The last inequality holds thanks to \eqref{decoup1} in Lemma \ref{thele}, Corollary \ref{Part-mom} and Lemma \ref{lem: prop mom}.

Since $q>8/(1+\gamma)$ optimizing in $M$ by taking $M\sim (N/k)^{\frac{4}{(1+\gamma)q-4}}$ gives 
\begin{align}\label{ineq: b320}
    B_{30}^K(s)\le C_{q,f_0,\gamma} \Big[\Big(\frac{k}{N}\Big)^{\frac{(1+\gamma)q-8}{(1+\gamma)q-4}}+\frac{N^{\frac{4}{(1+\gamma)q-4}}}{K^{1/\nu-1}}\Big].
\end{align}

{\it Step 4.}
We now estimate $B_{32}^K(s)$. As in the previous step from Remark \ref{rem: exchang} again,   for $j\notin \{k+1,...,2k\},$
\begin{align*}
    \cL(\Pi_s^{j,K}(\bW^K_s,\alpha)|\cF^k ) = \cL(\Pi_s^{1,K}(\bW^K_s,\alpha)|\cF^k),
\end{align*}
where $\cF^k=\sigma(\tW^{k+1,K,k}_s,...,\tW^{2k,K,k}_s)$.

Recall $k\le N/2$, H\"older's inequality and Corollary \ref{cor: norm-bound} give
\begin{align}\label{ineq: b322}
   B_{32}^K(s)\le& 2\int_0^1 \E\Big[ \Big( |
V^{1,K}_s-\Pi_s^{1,K}(\bW^K_s,\alpha)|^4+|\tW^{k+1,K,k}_s+\e_kY(\alpha)-
\Pi_s^{1,K}(\bW^K_s,\alpha)|^{2\gamma}\Big) 
 \indiq_{\Omega^c_{s,k}}\Big] d\alpha\nonumber\\
 \le& 8\int_0^1 \E\Big[ \Big( |
V^{1}_s|^4+|\Pi_s^{1,K}(\bW^K_s,\alpha)|^4+|\tW^{k+1,K,k}_s+\e_kY(\alpha)-
\Pi_s^{1,K}(\bW^K_s,\alpha)|^{2\gamma}\Big) 
 \indiq_{\Omega^c_{s,k}}\Big] d\alpha\nonumber\\
 \le& C\mathbb{P}^{1-\frac{4}{q}}(\Omega^c_{s,k})+8\int_0^1 \E\Big[ \frac{1}{N-k}\sum_{j=1}^N\Big(|\Pi_s^{j,K}(\bW^K_s,\alpha)|^4\notag\\
 &\hskip5cm+|\tW^{k+1,K,k}_s+\e_kY(\alpha)-
\Pi_s^{j,K}(\bW^K_s,\alpha)|^{2\gamma}\Big) 
 \indiq_{\Omega^c_{s,k}}\Big] d\alpha\nonumber \\
 \le& C\mathbb{P}^{1-\frac{4}{q}}(\Omega^c_{s,k})+16\int_0^1 \E\Big[ \frac{1}{N}\sum_{j=1}^N\Big(|\Pi_s^{j,K}(\bW^K_s,\alpha)|^4\notag\\
 &\hskip5cm+|\tW^{k+1,K,k}_s+\e_kY(\alpha)-
\Pi_s^{j,K}(\bW^K_s,\alpha)|^{2\gamma}\Big) 
 \indiq_{\Omega^c_{s,k}}\Big] d\alpha\nonumber\\
 \le& \mathbb{P}^{1-\frac{4}{q}}(\Omega^c_{s,k})+16(m_4(f^K_s)+1+I_1(f_0)) \mathbb{P}(\Omega^c_{s,k})\nonumber\\
 \le& C_{T,q,f_0}k^{(1-\delta q/3)(1-4/q)}.
\end{align}

We finally estimate $B_{31}^K$(s). We shall use the $L^2$ norm of
$\bar\mu^{k}_{\bf{\tW^{K,k}}_s}$ to control the negative $\gamma$-moments. Recall first that, for any probability density $g$ on $\R^3$ with $\|g\|_{L^2}<\infty$, and for any $w\in\R^3$,
\begin{align*}
    \int_{\R^3} |w-x|^\gamma g(x)dx=&  \int_{\R^3} |w-x|^\gamma \indiq_{|w-x|\ge 1}g(x)dx+ \int_{\R^3} |w-x|^\gamma \indiq_{|w-x|< 1}g(x)dx\\
    \le& 1+ \Big(\int_{\R^3} |w-x|^{2\gamma} \indiq_{|w-x|< 1}dx\Big)^{1/2}\Big(\int_{\R^3} g^2(x) dx\Big)^{1/2}\\
   =& 1+ \Big(\int_{\R^3} |z|^{2\gamma} \indiq_{|z|< 1}dz\Big)^{1/2}\Big(\int_{\R^3} g^2(x) dx\Big)^{1/2}\\ 
  =& 1+\Big(\int_0^\infty r^{2+2\gamma}\indiq_{r<1}dr\Big)^{1/2}\norm{g}_{L^2}\le C (\norm{g}_{L^2}+1),
\end{align*}
where $C>0$ depends only on $\gamma$. 

We also recall the definition of $\mu^{k}_{\bf{\tW^{K,k}}_s}$ from Section \ref{ttt}. The random variable $Y=Y(\alpha)$ is $\alpha$ uniformly distributed on $B(0,1)$ and is independent of $\Pi_s^{i,K}(\bW^K_s,\alpha)$. Therefore,
\begin{align*}
   & \int_0^1\frac{1}{k}\sum_{j=k+1}^{2k}|\e_kY(\alpha)+\tW^{j,K,k}_s+z|^\gamma d\alpha\\
    &=
    \int_{\R^3}\int_{\R^3}
    |x+w+z|^\gamma \psi_{\e_k}(x)\,\mu^{k}_{\bf{\tW^{K,k}}_s}(dw)\,dx\\
    &=
    \int_{\R^3}|w+z|^\gamma
    \bar\mu^{k}_{\bf{\tW^{K,k}}_s}(dw).
\end{align*}

Using these estimates and exchangeability or take a look at Remark \ref{rem: exchang}, we obtain
\begin{align*}
    B_{31}^K(s)=&  \frac{1}{k}\sum_{j=k+1}^{2k}\int_0^1 \E\left[ |V_s^{1} -
\Pi_s^{1,K}(\bW^K_s,\alpha)|^2(|\e_kY(\alpha)+\tW^{j,K,k}_s-
\Pi_s^{1,K}(\bW^K_s,\alpha)|^\gamma)\indiq_{\Omega_{s,k}} \right] d\alpha\\
     =& \int_0^1 \E\Big[  |
V_s^{1}-\Pi_s^{1,K}(\bW^K_s,\alpha)|^2
\Big(\int_{\R^3}\int_{\R^3}|w+x-
\Pi_s^{1,K}(\bW^K_s,\alpha)|^\gamma\psi_{\e_k}(x) \mu^{k}_{\bf{\tW^{K,k}}_s}(dw)dx\Big) \indiq_{\Omega_{s,k}}\Big] d\alpha\\
=&  \int_0^1 \E\left[  |
V_s^{1}-\Pi_s^{1,K}(\bW^K_s,\alpha)|^2\Big(\int_{\R^3}|w-
\Pi_s^{1,K}(\bW^K_s,\alpha)|^\gamma\bar\mu^{k}_{\bf{\tW^{K,k}}_s}(dw)\Big) \indiq_{\Omega_{s,k}}\right] d\alpha\\
\le& C\int_0^1 \E\left[|
V_s^{1}-\Pi_s^{1,K}(\bW^K_s,\alpha)|^2\Big(\norm{\bar\mu^{k}_{\bf{\tW^{K,k}}_s}}_{L^2}+1\Big)\indiq_{\Omega_{s,k}}\right] d\alpha\\
\le& C\big(1+\|f^K_s\|_{L^2}\big)\int_0^1 \E\left[|
V^1_s-W^{1,K}_s|^2+|
W^{1,K}_s-\Pi_s^{1,K}(\bW^K_s,\alpha)|^2\right] d\alpha\\
=& C\big(1+\|f^K_s\|_{L^2}\big)\Big(\E[|
V^1_s-W^{1,K}_s|^2]+\int_0^1 \frac{1}{N}\sum_{i=1}^N\E\left[|
W_s^{i,K}-\Pi_s^{i,K}(\bW^K_s,\alpha)|^2\right]d\alpha\Big)\\
=& C\big(1+\|f^K_s\|_{L^2}\big) \Big(\E[|
V^1_s-W^{1,K}_s|^2]+\E[\cW^2_2(\mu^N_{\bf{W^K_s}},f^K_s)]\Big),
\end{align*}
where the last inequality follows from Lemma \ref{ww2p}. By Proposition \ref{thep}, we conclude that 
\begin{align}\label{ineq: b321}
    B_{31}^K(s)\le C\big(1+\|f^K_s\|_{L^2}\big)\Big(\E[|
V^1_s-W^{1,K}_s|^2]+N^{-1/3}\Big)\le C_{f_0}\Big(\E[|
V^1_s-W^{1,K}_s|^2]+N^{-1/3}+K^{1-1/\nu}\Big) . 
\end{align}
Finally, by inequalities \eqref{ineq: b1K}, \eqref{ineq: b2K}, \eqref{ineq: b4K},   \eqref{ineq: b320}, \eqref{ineq: b321} and \eqref{ineq: b322}, we deduce
\begin{align*}
    \E[|V^{1}_t-W^{1,K}_t|^2]\le& \frac{CT}{N}+C\int_0^t (B_1^K(s)+B_2^K(s)+B_4^K(s))ds 
    +C\intot (B_{30}^K(s)+B_{31}^K(s)+B_{32}^K(s)) ds\\
    \le& C_{f_0}\int_0^t \E[|V^{1}_s-W^{1,K}_s|^2] ds+C_{T,f_0}\Big(K^{1-1/\nu}+\frac{N^{\frac{4}{(1+\gamma)q-4}}}{K^{1/\nu-1}}\Big)\\
    +&C_{T,q,f_0}\Big[N^{-1/3}+\big(\frac{k}{N}\big)^{\frac{(1+\gamma)q-8}{(1+\gamma)q-4}}+k^{-(1-\delta)(2+2\gamma)/3}+k^{(1-\delta q/3)(1-4/q)}\Big].
\end{align*}
 Choosing $\delta=6/q$, then $$k^{-(1-\delta)(2+2\gamma)/3}+k^{(1-\delta q/3)(1-4/q)}=k^{-(1-6/q)(2+2\gamma)/3}+k^{-(1-4/q)} \le 2k^{-(1-6/q)(2+2\gamma)/3},$$ when $q>8/(1+\gamma).$ By Gr\"onwall's inequality, we deduce  
\begin{align*}
    \E[|V^{1}_t-W^{1,K}_t|^2]\le C_{T,q,f_0}\frac{N^{\frac{4}{(1+\gamma)q-4}}}{K^{1/\nu-1}}+C_{T,q,f_0}\Big[N^{-1/3}+\big(\frac{k}{N}\big)^{\frac{(1+\gamma)q-8}{(1+\gamma)q-4}}+k^{-(1-6 /q)(2+2\gamma)/3}\Big].
\end{align*}
Now we return to \eqref{ineq: main proof}, we finally deduce for any $1\le K,$ $1\le k\le N/2,$  we have
\begin{align*}
    \E[\cW_2^2(\mu^N_{\bV_t},f_t) ]\le C_{T,q,f_0}\frac{N^{\frac{4}{(1+\gamma)q-4}}}{K^{1/\nu-1}}
    +C_{T,q,f_0}\Big[N^{-1/3}+\big(\frac{k}{N}\big)^{\frac{(1+\gamma)q-8}{(1+\gamma)q-4}}+k^{-(1- 6/q)(2+2\gamma)/3}\Big].
\end{align*}
Letting $K\to\infty$ and  finally optimizing in $k\sim N^{\ell_*(q,\gamma)}$ with 
$$
\ell_*(q,\gamma)= \frac{3q((1+\gamma)q-8)}
{(1+\gamma)(5+2\gamma)q^2-(44+32\gamma+12\gamma^2)q+48(1+\gamma)},
$$
which is obtained by balancing the last two terms. Thus, \eqref{eq:resu} follows and we complete the proof.

\vskip4mm \noindent\textbf{Acknowledgements} 
AZ is supported by National Key R\&D Program of China No. 2024YFA1015300, Beijing Natural Science Foundation No. 1242009, National Natural Science Foundation of China No. 11801536. LX is supported by the National Natural Science Foundation of China No. 12101028.  We are deeply grateful to Nicolas Fournier for valuable comments, and to Zhenfu Wang for helpful conversations.

\noindent\textbf{Statements} 
The authors have no relevant financial or non-financial interests to disclose. Data sharing is not applicable to this article as no datasets were used.

\end{document}